\numberwithin{equation}{section}
\newtheorem{theorem}{Theorem}[section]
\newtheorem{lemma}[theorem]{Lemma}
\newtheorem{corollary}[theorem]{Corollary}
\theoremstyle{definition}
\newtheorem{definition}[theorem]{Definition}
\theoremstyle{remark}
\newtheorem{remark}[theorem]{Remark}
\newtheorem{notation}[theorem]{Notation}
\newcommand{\ir}{\mathrm{i}}
\newcommand{\dr}{\mathrm{d}}
\newcommand{\dist}{\operatorname{dist}}
\newcommand{\Ric}{\operatorname{Ric}}
\begin{document}

\title{Global propagator for the massless Dirac operator\\
and spectral asymptotics}
\author{Matteo Capoferri\thanks{MC:
Department of Mathematics,
University College London,
Gower Street,
London WC1E~6BT,
UK;
\text{matteo\-.capoferri.16@ucl.ac.uk}
\&
School of Mathematics, Cardiff University, Senghennydd Rd, Cardiff CF24 4AG, UK;
\text{CapoferriM@cardiff.ac.uk}, \url{http://www.mcapoferri.com}.
}
\and
Dmitri Vassiliev\thanks{DV:
Department of Mathematics,
University College London,
Gower Street,
London WC1E~6BT,
UK;
D.Vassiliev@ucl.ac.uk,
\url{http://www.ucl.ac.uk/\~ucahdva/}
}}

\renewcommand\footnotemark{}

\date{1 July 2022}

\maketitle
\begin{abstract}
We construct the propagator of the massless Dirac operator $W$ on a closed Riemannian 3-manifold as the sum of two invariantly defined oscillatory integrals, global in space and in time, with distinguished complex-valued phase functions. The two oscillatory integrals --- the positive and the negative propagators --- correspond to positive and negative eigenvalues of $W$, respectively. This enables us to provide a global invariant definition of the full symbols of the propagators (scalar matrix-functions on the cotangent bundle), a closed formula for the principal symbols and an algorithm for the explicit calculation of all their homogeneous components. Furthermore, we obtain small time expansions for principal and subprincipal symbols of the propagators in terms of geometric invariants. Lastly, we use our results to compute the third local Weyl coefficients in the asymptotic expansion of the eigenvalue counting functions of $W$.

\

{\bf Keywords:} Dirac operator, hyperbolic propagators, global Fourier integral operators, Weyl coefficients.

\

{\bf 2020 MSC classes: }
primary 35L45;  
secondary 
35Q41, 
58J40, 
58J45, 
35P20. 

\end{abstract}

\tableofcontents

\allowdisplaybreaks

\section{Statement of the problem}
\label{Statement of the problem}

Let $(M,g)$ be a connected oriented closed Riemannian 3-manifold.
Throughout this paper we denote by $\nabla$ the Levi-Civita connection,
by $\Gamma^\alpha{}_{\beta \gamma}$ the Christoffel symbols
and by
\begin{equation}
\label{riemannian density}
\textstyle
\rho(x):=\sqrt{\textcolor{black}{\det} g_{\alpha\beta}(x)}
\end{equation}
the Riemannian density.

Let us clarify straight away why, when dealing with the massless Dirac operator,
we restrict our analysis to the 3-dimensional case.
The reason is twofold: on the one hand, dimension three is physically meaningful in that it represents the first step towards a potential future analysis of the relativistic 3+1-dimensional setting, and on the other hand, our method requires the eigenvalues of the principal symbol of our operator to be simple, cf.~Section~\ref{Preliminary results for general first order systems}, which is not the case for the massless Dirac operator in higher dimensions.

Let $e_j$, $j=1,2,3$, be a positively oriented global framing,
i.e.~a set of three orthonormal smooth  vector fields\footnote
{
Observe that an orientable 3-manifold is automatically parallelizable
\cite{Kirby,Stiefel}.
} whose orientation agrees with the orientation of the manifold.
In chosen local coordinates $x^\alpha$, $\alpha=1,2,3$,
we will denote by $e_j{}^\alpha$ the $\alpha$-th component
of the $j$-th vector field.
Throughout this paper we use Greek letters for holonomic (tensor) indices
and Latin for anholonomic (frame) indices.
We adopt Einstein's convention of summation over repeated indices.

Let
\begin{equation}
\label{Pauli matrices basic}
s^1:=
\begin{pmatrix}
0&1\\
1&0
\end{pmatrix}
=s_1
\,,
\quad
s^2:=
\begin{pmatrix}
0&-i\\
i&0
\end{pmatrix}
=s_2
\,,
\quad
s^3:=
\begin{pmatrix}
1&0\\
0&-1
\end{pmatrix}
=s_3
\end{equation}
be the standard Pauli matrices and let
\begin{equation}
\label{Pauli matrices projection}
\sigma^\alpha:=
s^j\,e_j{}^\alpha
\end{equation}
be their projection along the framing.
The quantity $\sigma^\alpha$ is a vector-function with values
in the space of trace-free Hermitian $2\times2$ matrices.

\begin{definition}
\label{massless dirac definition}
We call \emph{massless Dirac operator} the operator 
\begin{equation}
\label{massless dirac definition equation}
W:=-i\sigma^\alpha
\left(
\frac\partial{\partial x^\alpha}
+\frac14\sigma_\beta
\left(
\frac{\partial\sigma^\beta}{\partial x^\alpha}
+\Gamma^\beta{}_{\alpha\gamma}\,\sigma^\gamma
\right)
\right)
:H^1(M;\mathbb{C}^2)\to L^2(M;\mathbb{C}^2).
\end{equation}
Here $H^1$ is the usual Sobolev space of functions which are square integrable together
with their first partial derivatives.
\end{definition}

In relativistic particle physics the massless Dirac equation is often
referred to as the Weyl equation, which explains our notation.
Our operator $W$ appears as the result of separating out the time variable in the relativistic Weyl equation,
see \cite{spin2} for details. 
Henceforth, we refer to the massless Dirac operator
simply as the Dirac operator,
which conforms with the terminology adopted in differential geometry.

\begin{remark}
The Dirac operator admits several equivalent definitions.
The most common is the geometric definition written in terms spinor bundles.
Our analytic Definition~\ref{massless dirac definition}
is equivalent to the standard geometric one, see
\cite[Appendix B]{sphere}.
\end{remark}

Definition~\ref{massless dirac definition}
depends on the choice of framing and this issue requires clarification.

Let
\begin{equation}
\label{special unitary matrix-function}
G:M\to\mathrm{SU}(2)
\end{equation}
be an arbitrary smooth special unitary matrix-function
and let $\widetilde W$ be the Dirac operator corresponding to a given framing.
Consider the transformation
\begin{equation}
\label{transformation of the Dirac operator under change of frame}
\widetilde W\mapsto G^*\widetilde WG:=W,
\end{equation}
where the star indicates Hermitian conjugation.
It turns out that $W$ is also a
Dirac operator, only corresponding to a different framing.

Let us now look at the matter the other way round.
Suppose that $\widetilde W$ and $W$ are two Dirac operators.
Does there exist a smooth matrix-function \eqref{special unitary matrix-function}
such that $W=G^*\widetilde WG\,$? If the operators $\widetilde W$ and $W$
are in a certain sense `close' then the answer is yes, but in general
there are topological obstructions and the answer is no.
This motivates the introduction of the concept of spin structure, see
\cite{spin1,spin2}.

The gauge transformation
\eqref{special unitary matrix-function},
\eqref{transformation of the Dirac operator under change of frame}
is the manifestation, at operator level,
of the freedom of pointwise rotating the framing in a smooth way,
\begin{equation}
\label{gauge transfomation in terms of O}
\widetilde{e}_j \mapsto O_j{}^k\, \widetilde{e}_k=:e_j, \qquad O\in C^\infty(M;\mathrm{SO}(3)),
\end{equation}
via the double cover
\[
\mathrm{SU}(2) \overset{2:1}{\rightarrow} \mathrm{SO}(3).
\]

The Dirac operator
\eqref{massless dirac definition equation}
is uniquely determined
by the metric and spin structure modulo an $\mathrm{SU}(2)$ gauge transformation.

The Dirac operator is symmetric
with respect to the $L^2$ inner product
\begin{equation}
\label{inner product}
\langle u,v\rangle
:=
\int_M u^*v\,\rho\,dx\,, \qquad u,v\in L^2(M;\mathbb{C}^2),
\end{equation}
where $dx=dx^1dx^2dx^3$.
Furthermore, a simple calculation shows that it is elliptic\footnote{
Ellipticity means that
the determinant of the principal symbol does not vanish on $T^*M\setminus\{0\}$.
}.

It is well known that the Dirac operator is self-adjoint
and its spectrum is discrete, accumulating to $+\infty$ and to $-\infty$.
Let $\lambda_k$ be the eigenvalues of $W$ and $v_k$ the
corresponding orthonormal eigenfunctions,
$k\in\mathbb{Z}$. The choice of particular enumeration is irrelevant for our
purposes, but what is important is that eigenvalues are enumerated with
account of their multiplicity. Note that
the Dirac operator has the special property that
it commutes with the antilinear operator of charge conjugation
\begin{equation*}
v=
\begin{pmatrix}
v_1\\
v_2
\end{pmatrix}
\mapsto
\begin{pmatrix}
-\overline{v_2}\\
\overline{v_1}
\end{pmatrix}
=:\mathrm{C}(v),
\end{equation*}
see \cite[Appendix A]{jst_part_b} for details,
and this implies that eigenvalues have even multiplicity.

\begin{definition}
\label{dirac propagator definition}
We define the \emph{Dirac propagator} as
\begin{equation}
\label{dirac propagator definition equation}
U(t)
:=
e^{-itW}.
\end{equation}
\end{definition}

The Dirac propagator is the (distributional) solution of
the hyperbolic Cauchy problem
\begin{subequations}
\label{hyperbolic Cauchy problem}
\begin{equation}
\label{main PDE  massless Dirac}
\left(
-i\,\frac{\partial}{\partial t}
+
W
\right)
U=0\,,
\end{equation}
\begin{equation}
\label{half wave equation initial condition}
U(0)=\operatorname{Id}\,.
\end{equation}
\end{subequations}
It is a time-dependent unitary operator which can written via functional calculus as
\begin{equation}
\label{dirac propagator definition equation detailed}
U(t)=
\sum_{\lambda_k} e^{-it\lambda_k}\,
v_k\,
\langle v_k\,,\,\cdot\ \rangle.
\end{equation}

The Dirac propagator can be written as a sum of three operators
\begin{equation*}
\label{dirac propagator sum of three}
U(t)=U^+(t)+U^0+U^-(t)
\end{equation*}
defined as
\begin{subequations}
\begin{equation}
\label{dirac propagator definition equation detailed positive}
U^+(t):=
\sum_{\lambda_k>0} e^{-it\lambda_k}\,
v_k\,
\langle v_k\,,\,\cdot\ \rangle,
\end{equation}
\begin{equation}
\label{dirac propagator definition equation detailed zero mode}
U^0:=
\sum_{\lambda_k=0}
v_k\,
\langle v_k\,,\,\cdot\ \rangle,
\end{equation}
\begin{equation}
\label{dirac propagator definition equation detailed negative}
U^-(t):=
\sum_{\lambda_k<0} e^{-it\lambda_k}\,
v_k\,
\langle v_k\,,\,\cdot\ \rangle.
\end{equation}
\end{subequations}
We call the operators
\eqref{dirac propagator definition equation detailed positive},
\eqref{dirac propagator definition equation detailed zero mode}
and
\eqref{dirac propagator definition equation detailed negative}
\emph{positive}, \emph{zero mode} and \emph{negative} propagators, respectively.
These are time-dependent partial isometries.
Note that the operator $U^0$ is nontrivial only if the Dirac operator
has zero modes (i.e.~if zero is an eigenvalue).

We define the \emph{positive} ($+$) and \emph{negative} ($-$) local counting functions as
\begin{equation}
\label{local counting functions}
N_\pm(y;\lambda):=
\begin{cases}
0 & \text{for }\lambda \le0,\\
\sum_{0<\pm\lambda_k<\lambda} [v_k(y)]^* \,v_k(y)& \text{for }\lambda >0.
\end{cases}
\end{equation}
Of course, integration over $M$ gives
\begin{equation}
\label{global counting functions}
N_\pm(\lambda)
:=
\int_M N_\pm(y;\lambda)\,\rho(y)\,dy
=
\begin{cases}
0 & \text{for }\lambda \le0,\\
\sum_{0<\pm\lambda_k<\lambda} 1& \text{for }\lambda >0.
\end{cases}
\end{equation}
The functions
\eqref{global counting functions}
are the `global' counting functions,
the only difference with the usual definition
\cite{SaVa} being that we count
the positive and negative eigenvalues separately.

Let $\hat{\mu}:\mathbb{R}\to \mathbb{C}$ be a smooth function such that $\hat{\mu}= 1$ in some neighbourhood of the origin and $\operatorname{supp} \hat{\mu}$ is sufficiently small. Here `sufficiently small' means that
$\operatorname{supp} \hat{\mu}\subset(-T_0,T_0)$,
where $T_0$ is the infimum of lengths of all the geodesic loops
originating from all the points of the manifold.

Following the notation of \cite{wave}, we write the Fourier transform as
\begin{equation}
\mathcal{F}_{\lambda\to t}[f](t)=\hat{f}(t)=\int_{-\infty}^{+\infty} e^{-it\lambda} f(\lambda) \,d\lambda
\end{equation}
and the inverse Fourier transform as
\begin{equation}
\mathcal{F}_{t\to \lambda}^{-1}[\hat f](\lambda)=f(\lambda)=\dfrac{1}{2\pi}\int_{-\infty}^{+\infty} e^{it\lambda}\hat{f}(t) \,dt.
\end{equation}
Accordingly, we put $\mu:=\mathcal{F}^{-1}[\hat \mu]$.

It is known \cite{DuGu,Ivr80,Ivr84,Ivr98,SaVa} that the mollified derivative of the positive (resp.~negative) counting function admits a complete asymptotic expansion in integer powers of $\lambda$:
\begin{equation}
\label{expansion for mollified derivative of counting function}
(N_\pm'*\mu)(y,\lambda)=
c^\pm_{2}(y)\,\lambda^{2}
+
c^\pm_{1}(y)\,\lambda
+
c^\pm_{0}(y)
+\dots
\quad
\text{as}
\quad
\lambda\to+\infty.
\end{equation}
Here $*$ stands for the convolution in the variable $\lambda$.

\begin{definition}
\label{definition of Weyl coefficients}
We call \emph{local Weyl coefficients} the smooth functions $c^\pm_{k}(y)$ appearing in the asymptotic expansions
\eqref{expansion for mollified derivative of counting function}.
\end{definition}

\begin{remark}
\begin{enumerate}[(i)]
\item
Our definition of Weyl coefficients
does not depend on the choice of mollifier $\mu$.
If $\widetilde{\mu}$ is another mollifier
with the same support properties, then 
\[
(N_\pm'*\mu)(y,\lambda)-(N_\pm'*\widetilde\mu)(y,\lambda)=O(\lambda^{-\infty})\quad \text{as} \quad \lambda\to+\infty.
\]

\item
Our definition of Weyl coefficients is, in a sense, unusual. The standard convention in the literature is to call local Weyl coefficients the functions appearing in the asymptotic expansion of the mollified counting function $N*\mu\,$, as opposed to its derivative.
The two definitions are, effectively, the same up to integrating factors,
\begin{multline}
\label{expansion for mollified counting function}
(N_\pm*\mu)(y,\lambda)=
\int_{-\infty}^\lambda
(N_\pm'*\mu)(y,\kappa)\,d\kappa
\\
=
\frac13\,
c^\pm_{2}(y)\,\lambda^{3}
+
\frac12\,
c^\pm_{1}(y)\,\lambda^2
+
c^\pm_{0}(y)\,\lambda
+\dots
\quad
\text{as}
\quad
\lambda\to+\infty,
\end{multline}
compare
\eqref{expansion for mollified derivative of counting function}
with
\eqref{expansion for mollified counting function}.
As a matter of convenience, we will stick with
Definition~\ref{definition of Weyl coefficients}
throughout this paper.

\item
It was shown in \cite{jst_part_b} that
\begin{equation}
\label{first two weyl coefficients}
c^\pm_{2}(y)=\frac1{2\pi^2}\,,
\qquad
c^\pm_{1}(y)=0.
\end{equation}

\item
The unmollified counting functions $N_\pm(y,\lambda)$
also admit asymptotic expansions as $\lambda\to+\infty$,
but here the situation is more delicate because these functions
are discontinuous and one encounters number-theoretic issues.
It is known \cite{CDV,jst_part_b} that
\begin{equation*}
\label{one term asymptotics unmollified}
N_\pm(y,\lambda)
=
\frac1{6\pi^2}\,
\lambda^{3}
+
O(\lambda^2)
\quad
\text{as}
\quad
\lambda\to+\infty
\end{equation*}
uniformly over $y\in M$ and, furthermore,
under appropriate assumptions on geodesic loops,
\begin{equation*}
\label{two term asymptotics unmollified}
N_\pm(y,\lambda)
=
\frac1{6\pi^2}\,
\lambda^{3}
+
o(\lambda^2)
\quad
\text{as}
\quad
\lambda\to+\infty.
\end{equation*}

\item 
An important topic in the spectral theory of first order elliptic systems is the issue of spectral asymmetry \cite{asymm1, asymm2, asymm3, asymm4}. Let us mention that to observe spectral asymmetry for our Dirac operator one as to go as far as the \emph{sixth} Weyl coefficients. This follows from the fact \cite{bismut, gilkey} that the eta function
\[
\eta(s):=\sum_{\lambda_k\ne 0}\dfrac{\operatorname{sgn} \lambda_k}{|\lambda_k|^s}=\int_0^{+\infty} \lambda^{-s} (N'_+(\lambda)-N'_-(\lambda))\,d \lambda
\]
is holomorphic in the complex half-plane $\operatorname{Re}s>-2$ and has its first pole at $s=-2$. The value of the residue of the eta function at $s=-2$, which was computed explicitly by Branson and Gilkey \cite{branson}, describes the difference 
\[
\int_M (c^+_{-3}(y)-c^-_{-3}(y))\,\rho(y)\,d y
\] 
between the sixth (global) Weyl coefficients.

\end{enumerate}
\end{remark}

Our paper has two main objectives.

\

\textbf{Objective 1\ }
Construct the propagators $U^\pm(t)$ explicitly,
modulo integral operators with infinitely smooth kernels,
and do so as a single invariantly defined oscillatory integral
global in space and in time.

\

\textbf{Objective 2\ }
Compute the third Weyl coefficient $c^\pm_{0}(y)$.

\

\begin{remark}
One cannot, in general, identify the third Weyl
coefficient by looking at the asymptotic behaviour of the
unmollified counting function.
In order to illustrate this point, let us consider the 3-torus
equipped with standard flat metric.
Already in this simple case the mathematical statement
\begin{equation*}
\label{two term asymptotics unmollified false}
N_\pm(y,\lambda)
=
\frac1{6\pi^2}\,
\lambda^{3}
+
c^\pm_{0}(y)\,\lambda
+
o(\lambda)
\quad
\text{as}
\quad
\lambda\to+\infty
\end{equation*}
is \emph{false}.
This fact can be established by writing down the eigenvalues
explicitly as in \cite[Appendix B]{jst_part_b}
and using standard results
\cite{heath-brown}
from analytic number theory.
\end{remark}

\section{Main results}
\label{Main results}

The study of Dirac operators in curved space, arguably the most important operators from the point of view of physical applications alongside the Laplacian, has a long and noble history in the mathematical literature. Excellent introductions to the subject can be found in \cite{Lawson} and \cite{friedrich}.

Due to the physical significance of the topic,
numerous researchers have contributed to our current understanding of the
spectrum of Dirac operators on Riemannian manifolds. One can ask, for example, how the eigenvalues behave under perturbations of the metric \cite{bourguignon,sphere,torus}, how the spectrum depends on the spin structure \cite{Bar2}, whether zero modes exist \cite{Bar3}, \emph{et cetera}.

Later in this paper we will be concerned with the study of the asymptotic behaviour of large (in modulus) eigenvalues of the Dirac operator on a closed 3-manifold. In the case of scalar elliptic operators, such as for example the Laplace--Beltrami operator, a wide range of classical techniques are available in the literature to compute spectral asymptotics. However, if one is interested in a first order system, whose spectrum is, in general, not semi-bounded, the heat kernel method can no longer be applied, at least in its original form, and even resolvent techniques require major modification \cite{ASV}. A very natural approach in this case is the so-called wave method, going back to Levitan \cite{levitan} and Avakumovic \cite{avakumovic}, which involves recovering information about the eigenvalue counting function from the behaviour of the wave propagator, see \eqref{dirac propagator definition equation detailed}. How this can be done will be explained in greater detail later on.  This partly motivates our interest in the Dirac propagator \eqref{dirac propagator definition equation}, which is also of interest on its own. Of course, the hyperbolic Cauchy problem \eqref{hyperbolic Cauchy problem} for $W$ lies at the heart of relevant applications in theoretical physics (e.g., the mathematical description of neutrinos/antineutrinos in curved space). 

In order to construct the propagator \eqref{dirac propagator definition equation} \emph{precisely}, one needs to know all eigenvalues and eigenfunctions of $W$, which is unrealistic for a generic Riemannian manifold. However, microlocal techniques allow one to construct the propagator \eqref{dirac propagator definition equation} \emph{approximately}, modulo an integral operator with smooth integral kernel. This fact is well-known and an extensive discussion can be found, for instance, in \cite{Hor}.

There are, however, several issues with this classical construction: 
(i) it is not invariant under changes of local coordinates, (ii) it is local in space and (iii) it is local in time. The last issue, locality in time, is especially serious: it is to do with obstructions associated with caustics. In practice, constructing a propagator locally in time means that for large times one has to use compositions
\[
U(t)=U(t-t_j)\circ U(t_j -t_{j-1})\circ \cdots \circ U(t_2 -t_1)\circ U(t_1).
\]
The propagator $U(t)$ is a special case of a Fourier integral operator and it is known that handling compositions of such operators is a daunting task.

Our goal is to construct the Dirac propagator explicitly, in a global -- i.e., as a single oscillatory integral -- and invariant  (under change of coordinates and gauge transformations) fashion. The key idea, originally proposed by Laptev, Safarov and Vassiliev in \cite{LSV} and further developed in \cite{SaVa}, is to use Fourier integral operators with \emph{complex-valued}, as opposed to real valued, phase function. Crucially, this allows one to circumvent the topological obstructions due to caustics.

Our work partly builds upon \cite{CDV} and \cite{jst_part_b}.
In \cite{CDV}, using the wave method, Chervova, Downes and Vassiliev obtained an explicit formula for the second Weyl coefficient of an elliptic self-adjoint first order pseudodifferential matrix operator, fixing thirty years of incorrect or incomplete publications in the subject, see \cite[Section~11]{CDV}. In \cite{jst_part_b} the same authors applied the results from \cite{CDV} to the Dirac operator. Unlike the current paper, the approach from \cite{CDV} is not geometric in nature and the complexity
of phase functions is not actually put to use. Note that some results from \cite{jst_part_b} were improved by Strohmaier and Li in \cite{strohmaier}, where the authors studied the second term of the mollified spectral counting function of Dirac type operators and characterised operators in this class with vanishing second Weyl coefficient.

A fully geometric global construction of the (scalar) wave propagator $e^{-it \sqrt{-\Delta}}$ on closed Riemannian manifolds, as a single oscillatory integral with complex-valued phase function, was recently proposed by the authors and Levitin in \cite{wave}, and subsequently extended to the Lorentzian setting in \cite{lorentzian}. The publication \cite{wave} is the starting point of the current paper.  \textcolor{black}{The extension of the results of the current paper to globally hyperbolic Lorentzian manifolds is carried out in \cite{lorentzian_dirac}.}

Our main results are as follows.

\begin{enumerate}

\item
We present a global construction of each of the two propagators, the positive propagator $U^+(t)$ and the negative propagator $U^-(t)$, as a single invariantly defined oscillatory integral, global in space and in time, with distinguished complex-valued phase function (Theorem~\ref{theorem propagator for A}\textcolor{black}{, Definition~\ref{definition LC phase functions with epsilon dirac}, Definition~\ref{definition full symbols dirac propagator}}). We provide a closed formula for the principal symbols of the propagators (Theorem~\ref{theorem small time expansion principal symbol}) and an algorithm for the calculation of the subprincipal symbols and all asymptotic components of lower degree of homogeneity in momentum (subsection~\ref{The algorithm}).

\item We give an explicit small time expansion of principal and subprincipal symbols of positive and negative propagators in terms of geometric invariants (Theorem~\ref{theorem invariant expression principal and subprincipal symbols dirac}).

\item We compute the third local Weyl coefficients in the asymptotic expansion of the two eigenvalue counting functions \eqref{local counting functions} (Theorem~\ref{theorem about third Weyl coefficient}).

\end{enumerate}

Along the way we prove a number of results about general first order elliptic systems and invariant representations of pseudodifferential operators on manifolds. Note that the third Weyl coefficients can, in principle and with some work,  be also obtained by a different method using results available in the literature, see~Remark~\ref{remark about third weyl coefficient}.

\

Our paper is structured as follows.

In Section~\ref{Preliminary results for general first order systems}
we explain how to construct explicitly positive and negative propagators for a general first order elliptic self-adjoint (pseudo)differential matrix operator, with a rigorous mathematical justification.

In Section~\ref{Invariant description} we deal with the delicate issue of invariant descriptions of pseudodifferential operators acting on scalar functions. In particular, we examine the relation between our $g$-subprincipal symbol and the standard notion of subprincipal symbol for operators acting on half-densities.

In Section~\ref{Global propagator for the massless Dirac operator} we apply the results from Section~\ref{Preliminary results for general first order systems} to the Dirac operator. A formula for the principal symbol of positive and negative Dirac propagators is provided in Section~\ref{Principal symbol of the global Dirac propagator}, whereas small time expansions for principal and subprincipal symbols of positive and negative propagators are obtained in Section~\ref{Explicit small time expansion of the symbol}. Our final results are expressed in terms of geometric invariants: curvature of the Levi-Civita connection associated with the metric $g$ and torsion of the Weitzenb\"ock connection generated by the framing defining the Dirac operator.

In Section~\ref{An application: spectral asymptotics} we use the results from Section~\ref{Explicit small time expansion of the symbol} to compute the third local Weyl coefficients for the Dirac operator.

Finally, in Section~\ref{Examples} we apply our techniques to two explicit examples: $M=\mathbb{S}^3$, where formulae are isotropic in momentum, and $M=\mathbb{S}^2 \times \mathbb{S}^1$, where they are not.

The paper is complemented by two appendices, containing background material and technical proofs.

\section{Preliminary results for general first order systems}
\label{Preliminary results for general first order systems}

In this section we will consider a broader class of first order systems and we will prove fairly general results, which will be later applied to the special case of the Dirac operator. In doing so, we will need some of the technology developed in \cite{CDV}. The setting of our analysis is somewhat different from that in \cite{CDV}, in that our operators are differential, as opposed to pseudodifferential (see also Remark~\ref{remark about pseudodifferential}), and act on scalar functions on a Riemannian manifold, as opposed to half-densities on a manifold with no metric structure. In particular, the change of the space in which the operator acts raises delicate issues concerning the invariance of the mathematical objects involved. For these reasons we provide here a modified version of some of the results from \cite{CDV}, adapted to the setting of our paper.

Throughout this section, $M$ will be a smooth connected closed Riemannian manifold of dimension $d\ge 2$.

Let $A$ be an elliptic symmetric (with respect to \eqref{inner product}) first order $m\times m$ matrix differential operator acting on $m$-columns of smooth complex-valued scalar functions $v\in C^\infty(M;\mathbb{C}^m)$ and let $A_\mathrm{prin}: T'M \to \mathrm{Herm}(m,\mathbb{C})$ be the principal symbol of $A$, where $T'M:=T^*M\setminus\{0\}$ and $\mathrm{Herm}(m,\mathbb{C})$ is the real vector space of $m\times m$ Hermitian matrices. 

We denote by $h^{(j)}(x,\xi)$
the eigenvalues of $A_\mathrm{prin}(x,\xi)$ enumerated in increasing order, with positive index $j=1,2,\ldots, m^+$ for positive $h^{(j)}(x,\xi)$ and negative index $j=-1,-2,\ldots, -m^-$ for negative $h^{(j)}(x,\xi)$.
We assume that the eigenvalues of the principal symbol $A_\mathrm{prin}$ are simple.
Clearly, $m=m^++m^-$, because the ellipticity condition
$\det A_\mathrm{prin}(x,\xi)\ne0$
ensures that all eigenvalues are nonzero. In fact, as our operator is differential, one can show \cite[Remark~2.1]{israel} that $m$ can only be even and that we have
\begin{equation}
\label{m plus and m minus equal m/2}
m^+=m^-=\frac{m}2\,.
\end{equation}
Furthermore, the eigenvalues $h^{(j)}$ of the principal symbol and the corresponding normalised eigenvectors $v^{(j)}$ possess the symmetry
\begin{equation}
\label{symmetry eigenvalues and eigenvectors}
h^{(-j)}(x,\xi)=-h^{(j)}(x,-\xi), \quad v^{(-j)}(x,\xi)=v^{(j)}(x,-\xi), 
\qquad
j=1,\ldots,\frac{m}2\,.
\end{equation}

Under the above assumptions the spectrum of $A$ is discrete and accumulates to $+\infty$ and to $-\infty$. We denote eigenvalues and orthonormalised \textcolor{black}{(smooth)} eigenfunctions of $A$ by $\lambda_k$ and $v_k$, respectively, enumerated with account of their multiplicity.

By replacing $W$ with $A$, one can define the `full' propagator $U_A(t)$ for $A$ via \eqref{dirac propagator definition equation detailed}, as well as the positive, zero mode and negative propagators via \eqref{dirac propagator definition equation detailed positive}--\eqref{dirac propagator definition equation detailed negative}, which we denote by $U^+_A(t)$, $U^0_A$ and $U^-_A(t)$, respectively.


Each eigenvalue $h^{(j)}(x,\xi)$ of the principal symbol can be interpreted as a Hamiltonian on the cotangent bundle. The corresponding Hamiltonian flow $(x^{(j)}(t;y,\eta), \xi^{(j)}(t;y,\eta))$, i.e.~the (global) solution to Hamilton's equations
\[
\dot{x}^{(j)}=h_\xi^{(j)}(x^{(j)},\xi^{(j)}), \qquad \dot{\xi}^{(j)}=-h_x^{(j)}(x^{(j)},\xi^{(j)})
\]
with initial condition $(x^{(j)}(0;y,\eta),\xi^{(j)}(0;y,\eta))=(y,\eta)$, generates a Lagrangian manifold to which one can, in turn, associate a \emph{global} Lagrangian distribution. See \cite[Section~2]{wave} and references therein for details. In particular, the singularities of the solution to the initial value problem
\begin{equation}
(-i \partial_t+A)v=0, \qquad \left. v\right|_{t=0}=v_0
\end{equation}
propagate along Hamiltonian trajectories generated by the eigenvalues of $A_\mathrm{prin}$.

\subsection{Positive and negative propagators: an abstract approach}
\label{Positive and negative propagators: an abstract approach}

Our aim is to show that $U^+_A(t)$ and $U^-_A(t)$ can be \emph{separately} approximated by a finite sum of global oscillatory integrals. 
Before doing so, let us state and prove an abstract preparatory theorem.

\begin{notation}
Let 
\[
v\in C^\infty(\mathbb{R}\times M_x\times M_y), \qquad (\lambda,x,y)\mapsto v(\lambda,x,y).
\]
We write
\begin{equation*}
\label{notation big O}
v=O(|\lambda|^{-\infty}) \quad \text{as }\lambda\to\pm\infty
\end{equation*}
if for every $\alpha>0$,
every $k\in \mathbb{N}$ and every linear partial differential operator $P$ with infinitely smooth coefficients of order $k$ on $M_x\times M_y$
there exists a positive constant $C_{\alpha,P}$ such that
\begin{equation*}
\label{estimate notation big O}
|P v|\le C_{\alpha,P}\, |\lambda|^{-\alpha}
\quad
\text{for\ \,}
\pm\lambda>1\,,
\end{equation*}
uniformly over $M_x\times M_y\,$.
\end{notation}

\begin{theorem}
\label{theorem positive and negative propagator abstract approach}
Let
$(T_-,T_+)\subseteq \mathbb{R}$ be an open interval
(possibly, the whole real line)
and let
$u^+(t,x,y)$, $u^-(t,x,y)$, $\widetilde{u}^+(t,x,y)$ and $\widetilde{u}^-(t,x,y)$ be elements of
$C^\infty(M_x\times M_y; \mathcal{D}'(T_-,T_+))$, satisfying
\begin{enumerate}
\item[(a)] $u^+(t,x,y)+u^-(t,x,y)=\widetilde u^+(t,x,y)+\widetilde u^-(t,x,y) \mod C^\infty((T_-,T_+)\times M_x\times M_y)\,$.
\end{enumerate}
Furthermore, assume that for every $\zeta \in C^\infty_0(T_-,T_+)$ we have
\begin{enumerate}
\item[(b)] $\mathcal{F}^{-1}_{t\to \lambda}[\zeta \,u^\pm]=O(|\lambda|^{-\infty}) \quad \text{as }\lambda\to \mp \infty$,
\item[(c)] $\mathcal{F}^{-1}_{t\to \lambda}[\zeta\,\widetilde u^\pm]=O(|\lambda|^{-\infty}) \quad \text{as }\lambda\to \mp \infty$.
\end{enumerate}
Then
\begin{equation}
u^\pm(t,x,y)=\widetilde u^\pm(t,x,y) \mod C^\infty((T_-,T_+)\times M_x\times M_y).
\end{equation}
\end{theorem}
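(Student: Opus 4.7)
\medskip

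\textbf{Proof proposal.} My plan is to reduce everything to a single statement about the differences
\[
w^\pm(t,x,y):=u^\pm(t,x,y)-\widetilde u^\pm(t,x,y),
\]
and then exploit the standard characterisation of smoothness via rapid decay of the Fourier transform. By hypothesis (a) one has
\[
w^+ + w^- \;=:\; r \;\in\; C^\infty((T_-,T_+)\times M_x\times M_y),
\]
while subtracting (b) from (c) yields, for every $\zeta\in C^\infty_0(T_-,T_+)$,
\begin{equation*}
\mathcal{F}^{-1}_{t\to\lambda}[\zeta\,w^+]=O(|\lambda|^{-\infty})\quad\text{as }\lambda\to-\infty,
\qquad
\mathcal{F}^{-1}_{t\to\lambda}[\zeta\,w^-]=O(|\lambda|^{-\infty})\quad\text{as }\lambda\to+\infty.
\end{equation*}
So each $w^\pm$ already has rapidly decaying Fourier transform on \emph{one} half-line in $\lambda$; the whole game is to upgrade this to rapid decay on \emph{both} half-lines.

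The key observation is that $\zeta r$ is compactly supported and smooth in $t$, so $\mathcal{F}^{-1}_{t\to\lambda}[\zeta\,r]$ is a Schwartz function of $\lambda$, and in particular it is $O(|\lambda|^{-\infty})$ as $\lambda\to\pm\infty$, uniformly on $M_x\times M_y$ together with all its $(x,y)$-derivatives. Writing $\zeta w^+ = -\zeta w^- + \zeta r$ and taking the inverse Fourier transform in $t$, I get
\[
\mathcal{F}^{-1}_{t\to\lambda}[\zeta\,w^+]
=-\mathcal{F}^{-1}_{t\to\lambda}[\zeta\,w^-]+\mathcal{F}^{-1}_{t\to\lambda}[\zeta\,r].
\]
As $\lambda\to+\infty$ the right-hand side is $O(|\lambda|^{-\infty})$ by hypothesis plus the Schwartz remark; combined with the already known decay as $\lambda\to-\infty$, this yields
\[
\mathcal{F}^{-1}_{t\to\lambda}[\zeta\,w^+]=O(|\lambda|^{-\infty})\quad\text{as }|\lambda|\to+\infty,
\]
and symmetrically for $w^-$.

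It remains to turn two-sided rapid decay of $\mathcal{F}^{-1}_{t\to\lambda}[\zeta\,w^\pm]$ into joint smoothness of $w^\pm$ on $(T_-,T_+)\times M_x\times M_y$. Since the estimates in hypotheses (b), (c) are uniform after applying any linear partial differential operator $P$ in $(x,y)$ (this is baked into the $O(|\lambda|^{-\infty})$ notation), the same rapid decay holds for $\mathcal{F}^{-1}_{t\to\lambda}[\zeta\,P w^\pm]=P\,\mathcal{F}^{-1}_{t\to\lambda}[\zeta\,w^\pm]$. Fourier inversion then gives, for all $k\in\mathbb{N}$ and all such $P$,
\[
\partial_t^{\,k} P(\zeta w^\pm)(t,x,y)
=\frac{1}{2\pi}\int_{-\infty}^{+\infty} (i\lambda)^k\,e^{it\lambda}\,\mathcal{F}^{-1}_{t\to\lambda}[\zeta\,P w^\pm](\lambda,x,y)\,d\lambda,
\]
with the integrand absolutely convergent and continuous in $(t,x,y)$. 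Hence $\zeta w^\pm\in C^\infty(\mathbb{R}\times M_x\times M_y)$, and letting $\zeta$ range over a partition of unity on $(T_-,T_+)$ yields $w^\pm\in C^\infty((T_-,T_+)\times M_x\times M_y)$, as required.

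The conceptually delicate step — and where I expect to be most careful — is the last one: promoting separate smoothness in $t$ (from Fourier decay) and in $(x,y)$ (from the assumed regularity class) to \emph{joint} smoothness. The uniformity built into the $O(|\lambda|^{-\infty})$ notation is precisely what makes this work; without it, one would only obtain separate regularity in each group of variables.
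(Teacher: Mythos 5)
Your proof is correct and follows essentially the same route as the paper's: multiply by a test function $\zeta$, use hypothesis (a) together with the Schwartz decay of the Fourier transform of a compactly supported smooth remainder to upgrade the one-sided decay from (b), (c) to two-sided decay, and then conclude smoothness via the standard characterisation in terms of rapid decay of the Fourier transform. The only differences are stylistic --- you pass to the differences $w^\pm=u^\pm-\widetilde u^\pm$ at the outset and spell out the final Fourier-inversion step explicitly, which the paper leaves implicit; note only the minor slip that with the paper's conventions the reconstruction is $\mathcal{F}_{\lambda\to t}$, i.e.\ $\int e^{-it\lambda}(\,\cdot\,)\,d\lambda$ with no $1/(2\pi)$ prefactor, though this does not affect the argument.
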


\begin{proof}
Let $\zeta\in C_0^\infty(T_-,T_+)$. Multiplying (a) by $\zeta(t)$ we get
\begin{multline}
\label{proof preparatory theorem formula 1}
 \zeta(t)\,u^+(t,x,y)+\zeta(t)\,u^-(t,x,y)=\zeta(t)\,\widetilde u^+(t,x,y)+\zeta(t)\,\widetilde u^-(t,x,y) \\
 \mod C_0^\infty(\mathbb{R}\times M_x\times M_y).
\end{multline}
Applying the inverse Fourier transform
$\mathcal{F}^{-1}_{t\to \lambda}$
to \eqref{proof preparatory theorem formula 1},
letting $\lambda\to +\infty$ and using
\linebreak
assumptions (b) and (c) we obtain
\begin{equation}
\label{proof preparatory theorem formula 2}
\mathcal{F}^{-1}_{t\to\lambda}[\zeta\,u^+]=\mathcal{F}^{-1}_{t\to\lambda}[\zeta\,\widetilde u^+]+O(|\lambda|^{-\infty})\quad \text{as }\lambda\to+\infty.
\end{equation}
Here, when dealing with the remainder from
\eqref{proof preparatory theorem formula 1},
we used the fact that the Fourier transform of a compactly supported smooth function is rapidly decreasing. The compactness of $M$ ensures a uniform estimate in the spatial variables.

Furthermore, (b) and (c) immediately imply 
\begin{equation}
\label{proof preparatory theorem formula 3}
\mathcal{F}^{-1}_{t\to\lambda}[\zeta\,u^+]=\mathcal{F}^{-1}_{t\to\lambda}[\zeta\,\widetilde u^+]+O(|\lambda|^{-\infty})\quad \text{as }\lambda\to-\infty.
\end{equation}
Combining \eqref{proof preparatory theorem formula 2} and \eqref{proof preparatory theorem formula 3} we arrive at
\begin{equation*}
\label{proof preparatory theorem formula 4}
\mathcal{F}^{-1}_{t\to\lambda}[\zeta\,(u^+-\widetilde u^+)]
=
O(|\lambda|^{-\infty})\quad \text{as }|\lambda|\to+\infty,
\end{equation*}
which implies
\begin{equation*}
\label{proof preparatory theorem formula 5a}
\zeta\,(u^+-\widetilde u^+)\in C^\infty(\mathbb{R}\times M_x\times M_y).
\end{equation*}
As $\zeta\in C^\infty_0(T_-,T_+)$ in the above formula is arbitrary,
we conclude that
\begin{equation*}
\label{proof preparatory theorem formula 5b}
u^+-\widetilde u^+\in C^\infty((T_-,T_+)\times M_x\times M_y).
\end{equation*}

A similar argument gives
\begin{equation*}
\label{proof preparatory theorem formula 6}
u^--\widetilde{u}^- \in C^\infty((T_-,T_+)\times M_x\times M_y).
\end{equation*}
\end{proof}

\subsection{Construction of positive and negative propagators}
\label{Construction of positive and negative propagators}

\begin{theorem}
\label{theorem propagator for A}
\textcolor{black}{
Let $A$ be an elliptic self-adjoint first order pseudodifferential operator acting on $m$-columns of scalar functions over $M$,  whose principal symbol has simple eigenvalues.}
The positive and negative propagators \textcolor{black}{$U^+_A(t)$ and $U^-_A(t)$} can be written, modulo an infinitely smoothing operator, as a finite sum of oscillatory integrals, global in space and in time. More precisely, we have
\begin{equation}
\label{propagator approximated general theorem positive}
U^+_A(t)\overset{\mod \Psi^{-\infty}}{=}\sum_{j=1}^{m^+} U^{(j)}_A(t),
\end{equation}
\begin{equation}
\label{propagator approximated general theorem negative}
U^-_A(t)\overset{\mod \Psi^{-\infty}}{=}\sum_{j=1}^{m^-} U^{(-j)}_A(t),
\end{equation}
where
\begin{equation}
\label{oscillatory intergral for U^(j)}
U^{(j)}_A(t):=\frac{1}{(2\pi)^d}\int_{T'M} e^{i\varphi^{(j)}(t,x;y,\eta)}\,\mathfrak{a}^{(j)}(t;y,\eta)\,\chi^{(j)}(t,x;y,\eta)\,w^{(j)}(t,x;y,\eta)\,\left( \,\cdot\,\right)\rho(y)\,dy\,d\eta
\end{equation}
and
\begin{itemize}
\item 
by $\overset{\mod \Psi^{-\infty}}{=}$ we mean that the operator on the LHS is equal to the operator on the RHS up to an integral operator with infinitely smooth integral kernel;

\item $\left( \,\cdot\,\right)$ is meant for insertion of $f_0(y)$ when computing $(U^{(j)}_Af_0)(x)$;

\item
the phase function $\varphi^{(j)}\in C^\infty(\mathbb{R}\times M\times T'M; \mathbb{C})$ satisfies
\begin{enumerate}[(i)]
\item
$\left.\varphi^{(j)}\right|_{x=x^{(j)}}=0$,

\item
$\left. \varphi^{(j)}_{x^\alpha}\right|_{x=x^{(j)}}=\xi^{(j)}_\alpha$,

\item
$\left. \det\varphi^{(j)}_{x^\alpha\eta_\beta}\right|_{x=x^{(j)}}\ne0$,

\item
$\operatorname{Im} \varphi^{(j)}\ge 0$;
\end{enumerate}

\item
the symbol $\mathfrak{a}^{(j)}\in \mathrm{S}_{\mathrm{ph}}^0(\mathbb{R}\times T'M;\mathrm{Mat}(m;\mathbb{C}))$ is an element in the class of poly\-homogeneous symbols of order zero with values in $m\times m$ complex matrices, which means that $\mathfrak{a}^{(j)}$ admits an asymptotic expansion in components positively homogeneous in momentum,
\begin{equation}
\label{asympotic expansion symbol for A}
\mathfrak{a}^{(j)}(t;y,\eta)\sim \sum_{k=0}^{+\infty} \mathfrak{a}^{(j)}_{-k}(t;y,\eta), \qquad \mathfrak{a}^{(j)}_{-k}(t;y,\alpha\,\eta)=\alpha^{\textcolor{black}{-k}}\, \mathfrak{a}^{(j)}_{-k}(t;y,\eta), \quad\forall \alpha >0;
\end{equation}

\item the function $\chi^{(j)}\in C^\infty(\mathbb{R}\times M \times T'M)$ is a cut-off satisfying
\begin{enumerate}[(I)]
\item $\chi^{(j)}(t,x;y,\eta)=0$ on $\{(t,x;y,\eta) \,|\, |h^{(j)}(y,\eta)|\leq 1/2\}$,
\item $\chi^{(j)}(t,x;y,\eta)=1$ on the intersection of $\{(t,x;y,\eta) \,|\, |h^{(j)}(y,\eta)| \geq 1\}$ with some conical neighbourhood of $\{(t,x^{(j)}(t;y,\eta);y,\eta) \}$,
\item $\chi^{(j)}(t,x;y,\alpha\, \eta)=\chi^{(j)}(t,x;y,\eta)$ for $\alpha\geq 1$ on $\{ (t,x;y,\eta) \, | \, |h^{(j)}(y,\eta)|\geq 1   \}$;
\end{enumerate}

\item
the weight $w^{(j)}$ is defined by 
\begin{equation}
\label{weight definition general case}
w^{(j)}(t,x;y,\eta):= [\rho(x)\,\rho(y)]^{-\frac12} \left[{\det}^2( \varphi^{(j)}_{x^\alpha\eta_\beta}) \right]^\frac14,
\end{equation}
where the smooth branch of the complex root is chosen in such a way that 
\[
w^{\color{black}(j)}(0,y;y,\eta)=[\rho(y)]^{-1}.
\]
\end{itemize}
\end{theorem}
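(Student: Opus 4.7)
My plan is to build each $U_A^{(j)}(t)$ as a global oscillatory integral that solves the half-wave equation $(-\ir\partial_t+A)U_A^{(j)}=0$ modulo $C^\infty$, and then use the abstract separation Theorem~\ref{theorem positive and negative propagator abstract approach} to identify the sum over $j>0$ (respectively $j<0$) with $U_A^+(t)$ (respectively $U_A^-(t)$) modulo smoothing. The phase function $\varphi^{(j)}$ will be the Laptev--Safarov--Vassiliev complex-valued phase associated with the Hamiltonian $h^{(j)}$, constructed exactly as in~\cite{wave} for the scalar case: one prescribes the initial data (i)--(ii) at $t=0$ on the diagonal, and extends $\varphi^{(j)}$ off the characteristic set by a Taylor-type procedure along the Hamiltonian flow, arranging that $\mathrm{Im}\,\varphi^{(j)}$ is a quadratic form in $x-x^{(j)}$ whose Hessian has positive real part. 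Properties (i)--(iv) then hold by construction, and the phase is determined modulo terms that vanish to infinite order on $\{x=x^{(j)}\}$, i.e.~modulo terms that do not affect the oscillatory integral up to $C^\infty$.

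Next, I would determine the symbol $\mathfrak{a}^{(j)}$ homogeneous component by homogeneous component. Substituting the ansatz \eqref{oscillatory intergral for U^(j)} into $(-\ir\partial_t+A)U_A^{(j)}\equiv 0$ and applying stationary-phase expansion at the Lagrangian $\{x=x^{(j)},\,\xi=\xi^{(j)}\}$ produces an infinite sequence of transport equations along the Hamiltonian flow of $h^{(j)}$. The leading one prescribes $\mathfrak{a}_0^{(j)}(0;y,\eta)$ to be proportional to the spectral projector $P^{(j)}(y,\eta):=v^{(j)}(v^{(j)})^*$ onto the eigenspace of $A_\mathrm{prin}$ with eigenvalue $h^{(j)}$; the remaining components $\mathfrak{a}_{-k}^{(j)}$ are then obtained by solving first-order linear ODEs along the flow, which is possible globally in $t$ because the weight $w^{(j)}$ in \eqref{weight definition general case} precisely absorbs the factor $\det^2 \varphi^{(j)}_{x^\alpha\eta_\beta}$ arising from the transport and keeps $\mathfrak{a}^{(j)}$ well defined even across caustics. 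Matching at $t=0$ uses the completeness $\sum_{j}P^{(j)}(y,\eta)=I$, so $\sum_j U_A^{(j)}(0)=\mathrm{Id}\mod\Psi^{-\infty}$.

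The cut-off $\chi^{(j)}$ is inserted to ensure that the oscillatory integrals are well defined away from the zero section and to microlocalise to a conical neighbourhood of the Hamiltonian trajectory, which is where $\mathrm{Im}\,\varphi^{(j)}$ controls convergence; conditions (I)--(III) are standard and do not affect the result modulo $C^\infty$. Together these steps produce $U_A(t)\overset{\mod\Psi^{-\infty}}=\sum_{j=-m/2,\,j\ne 0}^{m/2}U_A^{(j)}(t)$ by uniqueness of the Cauchy problem~\eqref{hyperbolic Cauchy problem} adapted to $A$.

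To separate positive and negative parts I would then verify the hypotheses of Theorem~\ref{theorem positive and negative propagator abstract approach} with $u^\pm=\sum_{\pm j>0}U_A^{(j)}(t,x,y)$ and $\widetilde u^\pm$ the Schwartz kernels of $U_A^\pm(t)$. Condition (a) is the decomposition of the full propagator just obtained. Condition (c) follows from the spectral representation \eqref{dirac propagator definition equation detailed positive}--\eqref{dirac propagator definition equation detailed negative}: the inverse Fourier transform of $\zeta\,\widetilde u^+$ is a finite sum of terms supported in $\{\lambda_k>0\}$ plus an entire rapidly decreasing contribution, hence $O(|\lambda|^{-\infty})$ as $\lambda\to-\infty$. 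The main obstacle, as I see it, is condition (b): proving that $\mathcal F^{-1}_{t\to\lambda}[\zeta\,U_A^{(j)}]$ is $O(|\lambda|^{-\infty})$ as $\lambda\to-\infty$ when $h^{(j)}>0$ (and vice versa). This is a non-stationary-phase argument: inserting the oscillatory representation, the combined phase in the $(t,\eta)$-integral is $-t\lambda+\varphi^{(j)}(t,x;y,\eta)$, and the $t$-derivative on the Lagrangian equals $-\lambda+h^{(j)}(y,\eta)$, which is bounded away from zero on $\mathrm{supp}\,\chi^{(j)}$ when $\lambda$ has opposite sign to $h^{(j)}$. Repeated integration by parts in $t$, combined with the positive imaginary part of $\varphi^{(j)}$ which provides Gaussian decay in $\eta$ transverse to the flow, then yields the required rapid decay uniformly in $(x,y)\in M\times M$. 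Once (a)--(c) are verified, Theorem~\ref{theorem positive and negative propagator abstract approach} delivers \eqref{propagator approximated general theorem positive} and \eqref{propagator approximated general theorem negative}.
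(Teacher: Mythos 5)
Your proposal follows essentially the same route as the paper: build $\widetilde U_A(t)=\sum_j U_A^{(j)}(t)$ via the transport-equation algorithm so that it solves the Cauchy problem modulo $\Psi^{-\infty}$, identify it with the exact propagator modulo smoothing, and then invoke Theorem~\ref{theorem positive and negative propagator abstract approach} to separate positive and negative contributions; your non-stationary-phase sketch for hypothesis (b) is the same argument the paper cites from \cite[Appendix~C]{SaVa}.

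The one place where you compress a genuine step is the passage from
$(-i\partial_t+A)\widetilde U_A\overset{\mod\Psi^{-\infty}}{=}0$, $\widetilde U_A(0)\overset{\mod\Psi^{-\infty}}{=}\mathrm{Id}$
to $\widetilde U_A=U_A\mod C^\infty(\mathbb{R}\times M\times M)$. ``Uniqueness of the Cauchy problem'' on its own does not deliver this: uniqueness tells you that the error $u_\infty=u-\widetilde u$ solves the inhomogeneous problem with $C^\infty$ forcing $f$ and $C^\infty$ initial defect $\zeta$, but you still need a \emph{regularity} statement ensuring that this solution is itself $C^\infty$ jointly in $(t,x,y)$. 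The paper's proof spends most of its effort precisely here: it expands $u_\infty$, $f$, and $\zeta$ in the eigenfunctions $v_k$ of $A$, solves the resulting family of scalar ODEs by Duhamel, integrates by parts, and uses the Weyl law $\lambda_k\sim k^{1/d}$ to show rapid decay of the coefficients $a_{jk}(t)$, which is what gives smoothness of the series. Your proof would need to supply this argument (or an equivalent $H^s\to H^s$ boundedness estimate for the exact propagator applied to Duhamel's formula) to be complete.
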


\begin{remark}
Note that the weight $w^{(j)}$ is the inverse of a smooth density in the variable $y$ and a smooth scalar function in all other variables.
The powers of the Riemannian density $\rho$ in \eqref{weight definition general case} are chosen in such a way that the symbol $\mathfrak{a}^{(j)}$ and
the integral kernel
\begin{equation}
\label{integral kernel of U^(j)}
u^{(j)}(t,x,y):=\frac{1}{(2\pi)^d}\int_{T'_yM} e^{i \varphi^{(j)}(t,x;y,\eta)}\,\mathfrak{a}^{(j)}(t;y,\eta)\,\chi^{(j)}(t,x;y,\eta)\,w^{(j)}(t,x;y,\eta)\,d\eta
\end{equation}
of the operator \eqref{oscillatory intergral for U^(j)} are scalar functions in all variables.
The fact that the symbol is a genuine scalar function on
$\mathbb{R}\times T'M$ is a crucial feature of our construction.

Taking the square and then extracting the fourth root in \eqref{weight definition general case} serves the purpose of making the weight invariant under inversion of a single coordinate $x^\alpha$ or a single coordinate $y^\alpha$. Note, however, that if one works on an orientable and oriented manifold, then one can simplify \eqref{weight definition general case} to read
\begin{equation*}
\label{weight definition general case oriented manifold}
w^{(j)}(t,x;y,\eta)= [\rho(x)\,\rho(y)]^{-\frac12} \left[{\det}\varphi^{(j)}_{x^\alpha\eta_\beta}\right]^\frac12.
\end{equation*}
\end{remark}
\begin{remark}
The existence of a phase function satisfying conditions (i)--(iv)
is a nontrivial matter. In fact, the space of phase function satisfying these conditions
is nonempty and path-connected, see \cite[Lemmata~1.4~and~1.7]{LSV}.
\end{remark}

\color{black}
\begin{remark}
Let us emphasise that a phase function $\varphi^{(j)}$ satisfying conditions (i)--(iv) from Theorem~\ref{theorem positive and negative propagator abstract approach} automatically satisfies
\begin{equation}
\label{eikonal equation along the flow}
\varphi^{(j)}_t(t,x^{(j)};y,\eta)+h^{(j)}(x^{(j)},\xi^{(j)})=0,
\end{equation}
see, e.g., \cite[Subsection~2.4.1]{SaVa}. 
The equation
\begin{equation}
\label{eikonal equation}
\varphi^{(j)}_t(t,x;y,\eta)+h^{(j)}(x,\nabla\varphi^{(j)}(t,x;y,\eta))=0
\end{equation}
is known in the literature as \emph{eikonal equation}. Note that when $x=x^{(j)}(t;y,\eta)$ formula \eqref{eikonal equation} turns into \eqref{eikonal equation along the flow}. In the classical approach to the construction of hyperbolic propagators,  \eqref{eikonal equation} is required to be satisfied in some open neighbourhood of 
\begin{equation}
\label{neighbourhood of the flow}
\{(t,x;y,\eta)\in \mathbb{R}\times M \times T'M \ |\ x=x^{(j)}(t;y,\eta)\}.
\end{equation}
This is a fundamental difference with the approach adopted in the current paper, where \eqref{eikonal equation} is only required to be satisfied `along the Hamiltonian flow', i.e.,  one only needs \eqref{eikonal equation along the flow}. Indeed,  there is no open neighbourhood of \eqref{neighbourhood of the flow} where the special phase functions that will be introduced and used in Section~\ref{Global propagator for the massless Dirac operator} --- the \emph{Levi-Civita phase functions} --- satisfy \eqref{eikonal equation}. Relaxing the requirements on our phase functions is needed in order to accommodate an imaginary part and, consequently, circumvent obstructions arising from caustics.
\end{remark}
\color{black}

\begin{proof}[Proof of Theorem~\ref{theorem propagator for A}]
Suppose that we have constructed the symbols $\mathfrak{a}^{(j)}$
appearing in the oscillatory integrals
\eqref{oscillatory intergral for U^(j)}
so that
\begin{equation}
\label{sum of ALL the U_A^(j)}
\widetilde{U}_A(t):=
\sum_jU^{(j)}_A(t)
=
\sum_{j=1}^{m^+} U^{(j)}_A(t)
+
\sum_{j=1}^{m^-} U^{(-j)}_A(t)
\end{equation}
satisfies
\begin{subequations}
\begin{equation}
\label{proof propagator general case 1}
\left(
-i\,\frac{\partial}{\partial t}
+
A
\right)
\widetilde{U}_A(t)\overset{\mod \Psi^{-\infty}}{=}0\,,
\end{equation}
\begin{equation}
\label{proof propagator general case 2}
\widetilde{U}_A(0)\overset{\mod \Psi^{-\infty}}{=} \mathrm{Id}\,.
\end{equation}
\end{subequations}
How to achieve this will be explained in subsection~\ref{The algorithm}.

Put
\begin{equation*}
\label{sum of positive u^(j)}
\widetilde{u}^+(t,x,y):=
\sum_{j=1}^{m^+}u^{(j)}(t,x,y),
\end{equation*}
\begin{equation*}
\label{sum of negative u^(j)}
\widetilde{u}^-(t,x,y):=
\sum_{j=1}^{m^-}u^{(-j)}(t,x,y),
\end{equation*}
so that the Schwartz kernel of the operator $\widetilde{U}_A(t)$ reads
\begin{equation*}
\label{sum of ALL u^(j)}
\widetilde{u}(t,x,y)
=
\widetilde{u}^+(t,x,y)
+
\widetilde{u}^-(t,x,y).
\end{equation*}

Let $u(t,x,y)$, $u^+(t,x,y)$ and $u^-(t,x,y)$
be the Schwartz kernels of the operators
$U_A(t)$, $U^+_A(t)$, and $U^-_A(t)$, respectively.

Formulae
\eqref{proof propagator general case 1}
and
\eqref{proof propagator general case 2}
imply
\begin{equation}
\label{u agrees with tilde u}
u(t,x,y)=\widetilde u(t,x,y) \mod C^\infty(\mathbb{R}\times M_x\times M_y;\mathrm{Mat}(m,\mathbb{C})).
\end{equation}
This fact can be established as follows. 

Let
\begin{equation*}
\label{proof propagator general case 3}
u_\infty(t,x,y):=u(t,x,y)-\widetilde u(t,x,y).
\end{equation*}
From the construction algorithm, we know that
\begin{equation}
\label{proof propagator general case 4}
\left[
\left(
-i\,\frac{\partial}{\partial t}
+
A^{(x)}
\right)
u_\infty
\right]
(t,x,y)
=
f(t,x,y),
\end{equation}
\begin{equation}
\label{proof propagator general case 5}
u_\infty(0,x,y)
=
\zeta(x,y),
\end{equation}
where $f\in C^\infty(\mathbb{R}\times M_x\times M_y;\mathrm{Mat}(m,\mathbb{C}))$ and $\zeta\in C^\infty(M_x\times M_y;\mathrm{Mat}(m,\mathbb{C}))$.
Here the superscript in $A^{(x)}$ indicates that the differential operator $A$ acts in the variable~$x$.
Using functional calculus, we can write the functions $u_\infty$, $f$ and $\zeta$ in terms of the eigenfunctions of $A$ as
\begin{equation}
\label{proof propagator general case 6}
u_\infty(t,x,y)=\sum_{j,k}a_{jk}(t)\,v_j(x)\,[v_k(y)]^*,
\end{equation}
\begin{equation}
\label{proof propagator general case 7}
f(t,x,y)=\sum_{j,k}b_{jk}(t)\,v_j(x)\,[v_k(y)]^*,
\end{equation}
\begin{equation}
\label{proof propagator general case 8}
\zeta(x,y)=\sum_{j,k}c_{jk}\,v_j(x)\,[v_k(y)]^*.
\end{equation}
Here the smooth functions $b_{jk}$ and the constants $c_{jk}$ are given, whereas the functions $a_{jk}$ are our unknowns. Substituting \eqref{proof propagator general case 6}--\eqref{proof propagator general case 8} into \eqref{proof propagator general case 4}, \eqref{proof propagator general case 5} we obtain the family of first order ODEs
\begin{equation*}
\label{proof propagator general case 9}
\left[
\left(
-i\,\frac{d}{d t}
+
\lambda_j
\right)
a_{jk}
\right]
(t)
=
b_{jk}(t),
\end{equation*}
\begin{equation*}
\label{proof propagator general case 10}
a_{jk}(0)
=
c_{jk},
\end{equation*}
whose solutions are
\begin{equation}
\label{proof propagator general case 11}
a_{jk}(t)
=
e^{-i\lambda_jt}
\left(
c_{jk}
+i
\int_0^t
e^{i\lambda_js}
\,b_{jk}(s)\,ds
\right).
\end{equation}
\color{black}

Let $\widehat\zeta$ be the operator with integral kernel $\zeta(x,y)$,
\[
\widehat\zeta:
\ v(x)\mapsto\int_M\zeta(x,y)\,v(y)\,\rho(y)\,dy\,.
\]
Then
\begin{equation}
\label{cjk}
c_{jk}
=
\langle v_j,\widehat\zeta v_k\rangle
=
\frac{1}{\lambda_j^l\lambda_k^n}
\langle A^l v_j,\widehat\zeta A^n v_k\rangle
=
\frac{1}{\lambda_j^l\lambda_k^n}
\langle v_j,(A^l\widehat\zeta A^n)v_k\rangle.
\end{equation}
The operator $A^l\widehat\zeta A^n$ is a pseudodifferential operator of order $-\infty$, so formula \eqref{cjk} and the fact that $\lambda_k \sim k^{1/d}$ when $k\to\infty$ allow one to conclude that the $c_{jk}$ decay faster than any power of $j$ and $k$ as $j,k\to \infty$. A similar argument shows that the $b_{jk}(t)$
and their time derivatives decay faster than any power of $j$ and $k$ as $j,k\to \infty$ uniformly over any bounded open interval in $\mathbb{R}$.
Formula \eqref{proof propagator general case 11} now tells us that the same is true for the $a_{jk}(t)$.
\color{black}
This\color{black}
, in turn,
\color{black}
implies that the series on the RHS of \eqref{proof propagator general case 6} defines a function $u_\infty(t,x,y)$ which is smooth in all variables. So we arrive at \eqref{u agrees with tilde u}, which gives us assumption (a) in Theorem~\ref{theorem positive and negative propagator abstract approach} with $(T_-,T_+)=\mathbb{R}$.

Resorting to standard stationary phase arguments -- see, e.g., \cite[Appendix~C]{SaVa} -- and using the properties (i)--(iv) of our phase functions, it is easy to see that ${u}^\pm$ and $\widetilde{u}^\pm$ satisfy assumptions (b) and (c) of Theorem~\ref{theorem positive and negative propagator abstract approach}. Hence, Theorem~\ref{theorem positive and negative propagator abstract approach} gives us \eqref{propagator approximated general theorem positive} and \eqref{propagator approximated general theorem negative}.

The fact that the construction is global in time is guaranteed by \cite[Lemma~1.2]{LSV}.
\end{proof}

\begin{remark}
If one is prepared to give up globality in time, Theorem~\ref{theorem propagator for A} and the corresponding proof can be adapted in a straightforward manner to the more customary case of real-valued -- as opposed to complex-valued -- phase functions. This is achieved by prescribing the phase functions to take values in $\mathbb{R}$, dropping condition (iv) and replacing everywhere in the statement and in the proof the time domain $\mathbb{R}$ with the interval $(T_-,T_+)$, where
\begin{equation}
\label{definition of time T plus general case}
T_+:=\min_j\,\inf\{t>0\,|\,\left.\det \varphi^{(j)}_{x^\alpha \eta_\beta}\right|_{x=x^{(j)}}=0,\ (y,\eta)\in T'M\}\,,
\end{equation}
\begin{equation}
\label{definition of time T minus general case}
T_-:=\max_j\,\sup\{t<0\,|\,\left.\det \varphi^{(j)}_{x^\alpha \eta_\beta}\right|_{x=x^{(j)}}=0,\ (y,\eta)\in T'M\}\,.
\end{equation}
The values of $T_\pm$ depend on the choice of particular real-valued phase functions,
but we always have $T_-<0<T_+\,$.
Observe that Theorem~\ref{theorem positive and negative propagator abstract approach} was formulated in such a way that it covers both the case of real-valued and complex-valued phase functions.
\end{remark}

The reader will have noticed that the zero mode propagator $U^0_A$ does not appear in our construction. This is due to the fact that, clearly, 
\[
U^0_A\overset{\mod \Psi^{-\infty}}{=}0.
\]

We end this subsection with the observation that,
thanks to the presence of the weight $w^{(j)}$
in formula \eqref{oscillatory intergral for U^(j)}, the scalar matrix-function $\mathfrak{a}^{(j)}_0$ does not depend on the choice of the phase functions $\varphi^{(j)}$. This motivates the following definition.

\begin{definition}
\label{definition principal symbol general case}
We call $\mathfrak{a}^{(j)}_0$ the \emph{principal symbol} of the Fourier integral operator \eqref{oscillatory intergral for U^(j)}.
\end{definition}

The above definition agrees with the standard definition of principal symbol of a Fourier integral operator expressed as a section of the Keller--Maslov bundle, see \cite[subsection~2.4]{LSV}.

\subsection{The algorithm}
\label{The algorithm}

The integral kernel \eqref{integral kernel of U^(j)}
of $U^{(j)}_A(t)$ can be constructed explicitly as follows.

\

{\textbf{Step 1}.} Choose a phase function $\varphi^{(j)}$ compatible with Theorem~\ref{theorem propagator for A}. We will see later on that for the special case of the Dirac operator we can identify a distinguished phase function, the \emph{Levi-Civita phase function}. Furthermore, set $\chi^{(j)}\equiv 1$. In fact, the purpose of the cut-off is to localise integration in a neighbourhood of the $h^{(j)}$-flow and away from the zero section: different choices of $\chi^{(j)}$ result in oscillatory integrals differing by an infinitely smooth function.

\

{\textbf{Step 2}.} Act with the operator $-i \partial_t+A^{(x)}$ on the oscillatory integral \eqref{integral kernel of U^(j)}. This produces a new oscillatory integral
\begin{equation}
\label{algorithm formula 1}
\frac{1}{(2\pi)^d}\int_{T'_yM} e^{i \varphi^{(j)}(t,x;y,\eta)}\,a^{(j)}(t,x;y,\eta)
\,w^{(j)}(t,x;y,\eta)\,d\eta
\end{equation}
whose amplitude $a^{(j)}\in C^\infty(\mathbb{R}\times M \times T'M;\mathrm{Mat}(m,\mathbb{C}))$ is given by
\begin{equation*}
\label{algorithm formula 2}
a^{(j)}:=e^{-i \varphi^{(j)}} [w^{(j)}]^{-1} \left(-i\partial_t+A^{(x)}\right)\left(e^{i \varphi^{(j)}}\,\mathfrak{a}^{(j)} \,w^{(j)} \right).
\end{equation*}

By making use of the fact that $\varphi^{(j)}$ and $w^{(j)}$ are positively homogeneous in momentum $\eta$ of degree 1 and 0, respectively, one can write down an asymptotic expansion for the amplitude $a^{(j)}$ in components positively homogeneous in momentum:
\begin{equation*}
\label{algorithm formula 4}
a^{(j)}(t,x;y,\eta)\sim \sum_{k=-1}^{+\infty} a^{(j)}_{-k}(t,x;y,\eta), \quad  a^{(j)}_{-k}(t,x;y,\alpha\,\eta)=\alpha^{-k}\, a^{(j)}_{-k}(t,x;y,\eta), \quad \forall\alpha>0.
\end{equation*}

\

{\bf Step 3}. As $u^{(j)}(t,x,y)$ is to be the (distributional) solution of the hyperbolic equation
\[
(-i \partial_t +A^{(x)})u^{(j)}(t,x,y)\overset{\mod C^\infty}{=}0,
\]
one would like to impose the condition $a^{(j)}(t,x,y,\eta)=0$. However, the amplitude $a^{(j)}$, unlike the symbol $\mathfrak{a}^{(j)}$, depends on $x$, and doing so would result in an unsolvable system of partial differential equations (PDEs). The current step consists in excluding the dependence of $a^{(j)}$ on $x$ by means of a procedure known as \emph{reduction of the amplitude}, to the end of reducing the system of PDEs to a system of ordinary differential equations instead.

Put\footnote{
Here $(\varphi^{(j)}_{x\eta})^{-1}$ is defined in accordance with
$
[(\varphi^{(j)}_{x\eta})^{-1}]_\alpha{}^\beta \, \varphi^{(j)}_{x^\beta \eta_\gamma} =\delta_\alpha{}^\gamma.
$
}
\begin{equation*}\label{operator L with j}
L^{(j)}_\alpha:=\left[(\varphi^{(j)}_{x\eta})^{-1}\right]_\alpha{}^\beta\,\dfrac{\partial}{\partial x^\beta}
\end{equation*}
and define
\begin{subequations}\label{operators mathfrak S with j}
\begin{gather}
\mathfrak{S}_0^{(j)}:=\left.\left( \,\cdot\, \right)\right|_{x=x^{(j)}}\,, \label{mathfrak S0 with j}\\
\mathfrak{S}_{-k}^{(j)}:=\mathfrak{S}_0^{(j)} \left[ i \, [w^{(j)}]^{-1} \frac{\partial}{\partial \eta_\beta}\, w^{(j)} \left( 1+ \sum_{1\leq |\boldsymbol{\alpha}|\leq 2k-1} \dfrac{(-\varphi^{(j)}_\eta)^{\boldsymbol{\alpha}}}{\boldsymbol{\alpha}!\,(|\boldsymbol{\alpha}|+1)}\,L^{(j)}_{\boldsymbol{\alpha}} \right) L^{(j)}_\beta  \right]^k\,,\label{mathfrak Sk with j}
\end{gather}
\end{subequations}
where $\boldsymbol{\alpha}\in \mathbb{N}^d$, $|\boldsymbol{\alpha}|=\sum_{j=1}^d \alpha_j$ and $(-\varphi^{(j)}_\eta)^{\boldsymbol{\alpha}}:=(-1)^{|\boldsymbol{\alpha}|}\, (\varphi^{(j)}_{\eta_1})^{\alpha_1}\dots ( \varphi^{(j)}_{\eta_d})^{\alpha_d}$. The operator
\eqref{mathfrak Sk with j} is well defined,
because the differential operators $L^{(j)}_\alpha$ commute \textcolor{black}{\cite[Lemma~A.2]{wave}}. Furthermore, the operators $\mathfrak{S}^{(j)}_{-k}$ are invariant under change of local coordinates $x$ and $y$.

\begin{remark}
\label{remark Lalpha}
Let $f:\mathbb{R}^d\to \mathbb{R}^d$, $x\mapsto \widetilde x$, be a (locally) invertible map.  Then the operators
\[
\widetilde L_\alpha:=[(\nabla_x f)^{-1}]_\alpha{}^\beta \dfrac{\partial}{\partial x^\beta}
\]
are the pushforward of partial derivatives $\partial/\partial \widetilde{x}^\alpha$ along $f^{-1}$. Hence, the operators $\widetilde L_\alpha$ commute because the partial derivatives $\partial/\partial \widetilde{x}^\alpha$ commute.  An adjustment of the above argument to our setting with $f=\varphi^{(j)}_\eta$ (and with account of the fact that $\varphi^{(j)}$ is complex-valued) provides an alternative explanation for the commutation of the operators $L^{(j)}_\alpha$.
\end{remark}

The \emph{amplitude-to-symbol operator} is defined as 
\begin{gather}
\mathfrak{S}^{(j)}:C^\infty(\mathbb{R}\times M \times T'M)\to C^\infty(\mathbb{R} \times T'M)\,,
\nonumber
\\
\label{definition amplitude to symbol with j}
\mathfrak{S}^{(j)}:=\sum_{j=0}^\infty \mathfrak{S}^{(j)}_{-k}\,.
\end{gather}

When acting on a function positively homogeneous in momentum, the operator $\mathfrak{S}^{(j)}_{-k}$ excludes the dependence on $x$ and decreases the degree of homogeneity by $k$.

The reduction of the amplitude is achieved by replacing the amplitude $a^{(j)}$ in \eqref{algorithm formula 1}
by
\begin{equation*}
\label{algorithm formula 5}
\mathfrak{S}^{(j)}a^{(j)}=:\mathfrak{b}^{(j)},
\end{equation*}
with
\[
\mathfrak{b}^{(j)}(t;y,\eta)\sim \sum_{k=-1}^{+\infty} \mathfrak{b}^{(j)}_{-k}(t;y,\eta)\,, \qquad \mathfrak{b}^{(j)}_{-k}=\sum_{l+s=k} \mathfrak{S}^{(j)}_{-l} \,a^{(j)}_{-s}\,.
\]
The oscillatory integral 
\begin{equation*}
\label{algorithm formula 6}
\frac{1}{(2\pi)^d}\int_{T'_yM} e^{i \varphi^{(j)}(t,x;y,\eta)}\,\mathfrak{b}^{(j)}(t;y,\eta)
\,w^{(j)}(t,x;y,\eta)\,d\eta
\end{equation*}
differs from \eqref{algorithm formula 1} only by an infinitely smooth function.

We refer the reader to \cite[Appendix~A]{wave} for further particulars and detailed proofs concerning the amplitude-to-symbol operator.

\

{\bf Step 4}. Set
\begin{equation}
\label{algorithm formula 7}
\mathfrak{b}^{(j)}_{-k}=0, \qquad k=-1,0,1,\ldots.
\end{equation}
Equations \eqref{algorithm formula 7}, combined with the initial conditions stemming from the constraint
\begin{equation}
\label{algorithm formula 8}
\sum_{j} U^{(j)}(0)\overset{\mod \Psi^{-\infty}}{=}\mathrm{Id},
\end{equation}
yield a hierarchy of (matrix) transport equations for the homogeneous components $\mathfrak{a}^{(j)}_{-k}$.

\begin{remark}
For the special case of the massless Dirac operator, the first few equations in the hierarchy \eqref{algorithm formula 7} are given by 
\eqref{transport equations formula 4}--\eqref{third transport equation dirac}.
\end{remark}

\

Let us make a few remarks warranted by formula \eqref{algorithm formula 8}.

The $m$ oscillatory integrals appearing on the RHS of \eqref{propagator approximated general theorem positive} and \eqref{propagator approximated general theorem negative} are not independent of one another, but they `mix' at $t=0$ via the initial condition \eqref{algorithm formula 8}. Now, satisfying \eqref{algorithm formula 8} involves representing the identity operator on $C^\infty(M;\mathbb{C}^m)$ in a somewhat nonstandard fashion, as
\begin{equation}
\label{indentity operator oscillatory integral general case}
\mathrm{Id}\overset{\mod{\Psi^{-\infty}}}{=}\sum_j \frac{1}{(2\pi)^d}\int_{T'M} e^{i\varphi^{(j)}(0,x;y,\eta)} \,\mathfrak{s}^{(j)}(y,\eta)\,\chi^{(j)}(0,x;y,\eta)\,w^{(j)}(0,x;y,\eta)\,(\,\cdot\,)\,\rho(y)\,dy\,d\eta,
\end{equation}
with $\mathfrak{s}^{(j)}\in S^0_{\mathrm{ph}}(T'M;\mathrm{Mat}(m;\mathbb{C}))$.

In terms of the symbols $\mathfrak{a}^{(j)}$, the initial condition \eqref{algorithm formula 8} reads 
\begin{equation*}
\mathfrak{a}^{(j)}(0;y,\eta)=\mathfrak{s}^{(j)}(y,\eta).
\end{equation*}
From the fact that the principal symbol of the identity operator is the identity matrix it follows that
\begin{equation}
\label{identity operator principal symbol general case}
\sum_j \mathfrak{a}^{(j)}_0(0;y,\eta)=\sum_j \mathfrak{s}^{(j)}_0(y,\eta)=\mathbf{1}_{m\times m}.
\end{equation}
Furthermore, one can show that
\[
\mathfrak{s}^{(j)}_0(y,\eta)=v^{(j)}(y,\eta)\,[v^{(j)}(y,\eta)]^*.
\]
However, obtaining formulae for subleading components $\mathfrak{s}^{(j)}_{-1}$ is already a challenging task, see \cite[subsection~4.2]{CDV}. In general, lower order components of $\mathfrak{s}^{(j)}$ depend in a nontrivial manner on the eigenvalues and eigenprojections of the matrix-function $A_\mathrm{prin}(x,\xi)$ and on the choice of phase functions~$\varphi^{(j)}$.

The invariant representation of the identity operator -- and, more generally, of pseudo\-differential operators -- on manifolds is not a well-studied subject. An initial analysis of the scalar case was carried out in \cite[Section~6]{wave}. For the case of the Dirac operator a more detailed examination of \eqref{indentity operator oscillatory integral general case} will be provided in subsection~\ref{identity operator dirac}.  \textcolor{black}{A more extensive analysis of \eqref{indentity operator oscillatory integral general case} for a general operator $A$ is carried out in \cite{part1, part2,diagonalisation}.}

\begin{remark}
\label{remark about pseudodifferential}
All statements and results presented in this section carry over verbatim to the case where $A$ is an elliptic symmetric first order $m\times m$ matrix \emph{pseudodifferential} -- as opposed to differential -- operator, with the following exceptions:
\begin{itemize}
\item 
formulae \eqref{m plus and m minus equal m/2} and \eqref{symmetry eigenvalues and eigenvectors} have to be dropped as they are no longer true;

\item 
`Step 2.' in subsection \ref{The algorithm} has to be modified to take into account the action of a pseudodifferential operator on an oscillatory integral in an \emph{invariant} manner, along the lines of \cite[Section~4.3]{battistotti}.
\end{itemize}
\end{remark}

\begin{remark}
Let us point out that in this section we did not use anywhere the fact that $M$ carries a Riemannian structure. 
If one replaces the Riemannian density \eqref{riemannian density} with an arbitrary positive density, all statements and results stay the same.
\end{remark}

\section{Invariant description of pseudodifferential operators acting on scalar functions}
\label{Invariant description}

In order to prepare ourselves to address the issue of initial conditions for our transport equations in the case of the Dirac operator, we need to discuss first the more general question of invariant
representation of a pseudodifferential operator. We devote a separate section to this, as we believe this matter to be of independent interest.
Note that we treat the case of a scalar operator merely for the sake of presentational
convenience: all the formulae and arguments in this subsection
remain unchanged for matrix pseudodifferential operators acting
on $m$-columns of scalar functions.

\begin{definition}
We call \emph{time-independent Levi-Civita phase function} the function
$\phi\in C^\infty(M\times T'M; \mathbb{C})$ defined by
\begin{equation}
\label{formula time-independent Levi-Civita phase function}
\phi(x;y,\eta):=\int_{\gamma} \zeta\, dz +\frac{i\epsilon}{2} h(y,\eta)\,\left[ \mathrm{dist}(x,y)\right]^2
\end{equation}
when $x$ lies in a geodesic neighbourhood of $y$ and continued smoothly elsewhere in such a way that $\mathrm{Im}\,\phi\ge 0$.
Here $\gamma$ is the (unique) shortest geodesic connecting $y$ to $x$, $\zeta$ is the parallel transport of $\eta$ along $\gamma$,
\begin{equation}
\label{eigenvalues massless dirac}
h(y,\eta):=\sqrt{g^{\alpha\beta}(y)\,\eta_\alpha\eta_\beta}\,,
\end{equation}
$\mathrm{dist}$ is the geodesic distance and $\epsilon$ is a positive parameter.
\end{definition}

Let $P$ be a pseudodifferential operator of order $p$ acting
on scalar functions over a Riemannian $d$-manifold.
The operator $P$ can be written, modulo an integral operator with smooth kernel, in the form
\begin{equation}
\label{pseudo P invariant}
P=\int_{T'M} e^{i\phi(x;y,\eta)} \,\mathfrak{p}(y,\eta)\,\chi_0(x;y,\eta)\,w_0(x;y,\eta)\,(\,\cdot\,)\,\rho(y)\,dy\,d\eta,
\end{equation}
where $\phi$ is the time-independent Levi-Civita phase function, $\mathfrak{p}\in S^m_{\mathrm{ph}}(T'M)$, $\chi_0$ is a cut-off localising integration to a neighbourhood of the diagonal and away from the zero section (see also (I)--(III) in Theorem~\ref{theorem positive and negative propagator abstract approach}) and 
\begin{equation}
w_0(x;y,\eta):= \left[\rho(x)\,\rho(y)\right]^{-\frac12}\left[{\det}^2 \phi_{x^\alpha\eta_\beta}(x;y,\eta) \right]^\frac14.
\end{equation}
Here the smooth branch of the complex root is chosen in such a way that $w_0(y;y,\eta)=[\rho(y)]^{-1}$.

\begin{remark}
Note that \eqref{pseudo P invariant} is, effectively, a special case of \eqref{oscillatory intergral for U^(j)} with $t=0$.
\end{remark}

Formula \eqref{pseudo P invariant} provides an invariant representation of the pseudodifferential operator~$P$.

\begin{definition}
We call \emph{full symbol} of the operator $P$ the scalar function
\[
\mathfrak{p}(y,\eta)\sim\sum_{k=-p}^{+\infty} \mathfrak{p}_{-k}(y,\eta).
\]
Furthermore, we call the homogeneous functions
$\mathfrak{p}_p$ and $\mathfrak{p}_{p-1}$
the  $g$-\emph{principal} and $g$-\emph{subprincipal} symbol,
respectively\footnote{Here `$g$' is a reference to the Riemannian metric used in the construction of the phase function $\phi$.}.
\end{definition}

The notions of principal and subprincipal symbols of a pseudodifferential operator are nowadays standard concepts in microlocal analysis. The former makes sense for operators acting either on scalar functions or on half-densities, whereas the latter is only defined for operators acting on half-densities. We refer the reader to \cite{Hor} for further details. Note that the concept of subprincipal symbol was introduced by Duistermaat and H\"ormander in \cite[Eqn.~(5.2.8)]{DuHo}.

It is easy to see that the concept of principal symbol $P_\mathrm{prin}$ and that of $g$-principal symbol $\mathfrak{p}_p$ coincide. As far as the subprincipal symbol is concerned, the situation is more complicated, in that before drawing a comparison we need to turn our operator into an operator acting on half-densities.

Put 
\begin{equation}
\label{P one half}
P_{1/2}:= \rho^{1/2}\,P\,\rho^{-1/2} 
\end{equation}
and let $P_\mathrm{sub}$ be the subprincipal symbol of the operator \eqref{P one half} defined in accordance with \cite[Eqn.~(5.2.8)]{DuHo}.

A natural question to ask is: what is the relation between 
$P_\mathrm{sub}$ and $\mathfrak{p}_{p-1}$?

\begin{theorem}
\label{theorem subprincipal symbols for P dirac}
The invariant quantities $P_\mathrm{sub}$ and $\,\mathfrak{p}_{p-1}$ are related as
\begin{equation}
\label{relation between invariant subps}
\mathfrak{p}_{p-1}=P_\mathrm{sub}+\frac{i}{2}(P_\mathrm{prin})_{y^\alpha\eta_\alpha}+\frac{i}{2}\,\Gamma^\alpha{}_{\beta\gamma}\left[ \eta_\alpha(P_\mathrm{prin})_{\eta_\beta}\right]_{\eta_\gamma}- \frac\epsilon2\,g_{\beta\gamma} \left[h \,(P_\mathrm{prin})_{\eta_\beta} \right]_{\eta_\gamma}.
\end{equation}
\end{theorem}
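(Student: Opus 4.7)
The strategy is to reduce~\eqref{pseudo P invariant} to standard Kohn--Nirenberg form in an arbitrary local chart, extract the left full symbol $p^L$ of $P$, conjugate by $\rho^{1/2}$ to obtain the left full symbol $q^L$ of the half-density operator $P_{1/2}=\rho^{1/2}P\rho^{-1/2}$, and then apply the Duistermaat--H\"ormander formula $P_{\mathrm{sub}}=q^L_{p-1}-\tfrac{1}{2i}\partial^2_{y^\alpha\eta_\alpha}q^L_p$ (see~\cite{DuHo,Hor}). The identity~\eqref{relation between invariant subps} then follows by a rearrangement.

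The key inputs are Taylor expansions at $x=y$ of both the phase and the weight. Using $\exp_y^{-1}(x)^\alpha=(x-y)^\alpha+\tfrac{1}{2}\Gamma^\alpha{}_{\beta\gamma}(y)(x-y)^\beta(x-y)^\gamma+O(|x-y|^3)$ and $[\dist(x,y)]^2=g_{\alpha\beta}(y)(x-y)^\alpha(x-y)^\beta+O(|x-y|^3)$, the Levi-Civita phase expands as
\[
\phi(x;y,\eta)=(x-y)^\alpha\eta_\alpha+\tfrac{1}{2}(x-y)^\alpha(x-y)^\beta H_{\alpha\beta}(y,\eta)+O(|x-y|^3),
\]
with $H_{\alpha\beta}(y,\eta):=\Gamma^\gamma{}_{\alpha\beta}(y)\,\eta_\gamma+i\epsilon\,h(y,\eta)\,g_{\alpha\beta}(y)$. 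For the weight, a direct computation of $\partial_{x^\gamma}(w_0\rho(y))\big|_{x=y}$, combined with the identity $\Gamma^\alpha{}_{\alpha\gamma}=(\log\rho)_{,\gamma}$, shows that the $(\log\rho)$-contributions from $\rho(x)^{-1/2}$ and from $[\det^2\phi_{x\eta}]^{1/4}$ cancel exactly, leaving
\[
w_0(x;y,\eta)\,\rho(y)=1+(x-y)^\gamma W_\gamma(y,\eta)+O(|x-y|^2),\qquad W_\gamma=\tfrac{i\epsilon}{2}\,h_{\eta_\alpha}(y,\eta)\,g_{\alpha\gamma}(y).
\]

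Next I would treat~\eqref{pseudo P invariant} as an amplitude representation with amplitude $A(x,y,\eta)=\mathfrak{p}(y,\eta)\,w_0(x;y,\eta)\,\rho(y)\bigl[1+\tfrac{i}{2}(x-y)^\alpha(x-y)^\beta H_{\alpha\beta}(y,\eta)+O(|x-y|^3)\bigr]$ and phase $(x-y)\cdot\eta$, and apply the reduction $p^L(y,\eta)\sim\sum_{|\alpha|\geq0}\tfrac{1}{\alpha!}\partial_\eta^\alpha D_y^\alpha A\big|_{y=x}$. Collecting the $(p-1)$-homogeneous components (using $\mathfrak{p}_p=P_{\mathrm{prin}}$) gives
\[
p^L_{p-1}=\mathfrak{p}_{p-1}-i\,(P_{\mathrm{prin}})_{y^\gamma\eta_\gamma}+i\,(P_{\mathrm{prin}}\,W_\gamma)_{\eta_\gamma}-\tfrac{i}{2}\,(P_{\mathrm{prin}}\,H_{\alpha\beta})_{\eta_\alpha\eta_\beta}.
\]
A standard composition computation yields $q^L_{p-1}=p^L_{p-1}+\tfrac{i}{2}(\log\rho)_{,\gamma}(P_{\mathrm{prin}})_{\eta_\gamma}$; substituting this into the Duistermaat--H\"ormander formula and regrouping by means of the identity $\Gamma^\alpha{}_{\beta\gamma}[\eta_\alpha(P_{\mathrm{prin}})_{\eta_\beta}]_{\eta_\gamma}=(\log\rho)_{,\beta}(P_{\mathrm{prin}})_{\eta_\beta}+\Gamma^\alpha{}_{\beta\gamma}\eta_\alpha(P_{\mathrm{prin}})_{\eta_\beta\eta_\gamma}$ produces precisely~\eqref{relation between invariant subps}.

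The main obstacle is the bookkeeping of the $(\log\rho)$-contributions, which arise from three independent sources ($\rho(x)^{-1/2}$ in $w_0$, the Christoffel trace inside $[\det^2\phi_{x\eta}]^{1/4}$, and the half-density conjugation) and must recombine through the symmetrisation identity above into the single invariant $\tfrac{i}{2}\Gamma^\alpha{}_{\beta\gamma}[\eta_\alpha(P_{\mathrm{prin}})_{\eta_\beta}]_{\eta_\gamma}$-term; an analogous cancellation, using $g_{\alpha\beta}h_{\eta_\alpha}h_{\eta_\beta}=1$ and $\eta\cdot h_\eta=h$, links the $\epsilon$-parts of $W$ and of $H$ and yields the $-\tfrac{\epsilon}{2}g_{\beta\gamma}[h(P_{\mathrm{prin}})_{\eta_\beta}]_{\eta_\gamma}$-term. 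A convenient sanity check is to evaluate both sides at a point in Riemannian normal coordinates, where $\Gamma$ and $(\log\rho)_{,\gamma}$ vanish and~\eqref{relation between invariant subps} reduces to a short verification involving only the $\epsilon$-contribution of the phase.
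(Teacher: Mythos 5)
Your proposal is correct and follows the same overall strategy as the paper: Taylor-expand the Levi--Civita phase $\phi$ and the weight $w_0$ near the diagonal, reduce the resulting amplitude to a standard Kohn--Nirenberg symbol, and compare with the Duistermaat--H\"ormander subprincipal-symbol formula, using $\Gamma^\alpha{}_{\alpha\gamma}=(\log\rho)_{,\gamma}$ and the symmetrisation identity to recombine the Christoffel contributions. The only notable (and harmless) difference is that you eliminate the $y$-dependence, extract the \emph{left} full symbol $p^L$, and conjugate by $\rho^{1/2}$ via the symbol composition formula, whereas the paper passes immediately to the integral kernel of $P_{1/2}$ (so the $[\rho(x)\rho(y)]^{\pm1/2}$ factors cancel at once), eliminates the $x$-dependence with $\mathcal{S}_\mathrm{right}$, and works with the \emph{right} quantization --- a choice the paper itself flags as flipping the sign of the $\tfrac{i}{2}(P_\mathrm{prin})_{y^\alpha\eta_\alpha}$ correction; the two conventions are reconciled exactly by the extra first-order term you track in $q^L_{p-1}$, and your bookkeeping reproduces~\eqref{relation between invariant subps}.
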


Theorem~\ref{theorem subprincipal symbols for P dirac} implies that,
in particular, the two notions of subprincipal symbol coincide when
the principal symbol does not depend on $\eta$, i.e.~when
$P$ is a pseudodifferential operator of the type
``multiplication by a scalar function plus an operator of order $-1$''.
Note that the identity operator, whose invariant representation was investigated in
\cite[Section~6]{wave},  falls into this class.

\begin{remark}
A tedious, yet straightforward, calculation shows that the RHS of \eqref{relation between invariant subps} is a scalar 
function on the cotangent bundle.
In fact, the second and third summands on the RHS of \eqref{relation between invariant subps}
admit an invariant representation in terms of the Laplace--Beltrami operator associated with the \emph{neutral metric} $n$ on the cotangent bundle $T^*M$, which, in local coordinates $(x^1, \ldots,x^d, \xi_1, \ldots, \xi_d)$, reads
\begin{equation}
n_{jk}(x,\xi)=
\begin{pmatrix}
-2\, \xi_\gamma \, \Gamma^\gamma{}_{\alpha\beta}(x)  & \delta_\alpha{}^\mu\\
\delta^\nu{}_\beta & 0
\end{pmatrix}, \qquad j,k \in \{1,\ldots, 2d\}.
\end{equation}
The adjective `neutral' refers to the fact that the metric $n$ has signature $(d,d)$. It turns out that the neutral metric is an effective tool in the development of an invariant theory
of pseudodifferential operators on Riemannian manifolds.
As the analysis of this matter requires a lengthy discussion and
would take us away from the core subject of our paper, 
we plan to address it in detail elsewhere. See also \cite{sharafutdinov}.
\end{remark}

\begin{proof}[Proof of Theorem~\ref{theorem subprincipal symbols for P dirac}]
Consider the pseudodifferential operator $P$ and turn it into an operator on half-densities $P_{1/2}$ via \eqref{P one half}. In what follows we work in an arbitrary coordinate system, the same for $x$ and $y$.

Dropping the cut-off, the integral kernel of $P_{1/2}$ now reads
\begin{equation}
\label{proof general subprincipal dirac temp 1}
\frac1{(2\pi)^d} \int_{T'_yM} e^{i\phi(x;y,\eta)} \,\mathfrak{p}(y,\eta)\, \sqrt{\det\phi_{x\eta}}\,d\eta\,.
\end{equation}
Our phase function \eqref{formula time-independent Levi-Civita phase function} admits the expansion
\begin{equation}
\label{proof general subprincipal dirac temp 2}
\phi(x;y,\eta)=(x-y)^\alpha\eta_\alpha
+\frac12\Gamma^\alpha{}_{\beta\gamma}\,\eta_\alpha(x-y)^\beta (x-y)^\gamma
+\frac{i\epsilon h}2g_{\alpha\beta}(x-y)^\alpha(x-y)^\beta + O(\|x-y\|^3),
\end{equation}
which implies that
\begin{equation}
\label{proof general subprincipal dirac temp 3}
\sqrt{\det\phi_{x\eta}}=1+
\frac12[
\Gamma^\alpha{}_{\alpha\beta}+i\epsilon h^{-1}\eta_\beta
](x-y)^\beta+O(\|x-y\|^2).
\end{equation}
Substituting \eqref{proof general subprincipal dirac temp 2} and \eqref{proof general subprincipal dirac temp 3} into \eqref{proof general subprincipal dirac temp 1}, we get
\begin{multline}
\label{proof general subprincipal dirac temp 4}
\frac{1}{(2\pi)^d}\int e^{i(x-y)^\alpha\eta_\alpha}
\Bigl\{
\mathfrak{p}_p
\\
+\left(
\frac12
\left[
i\Gamma^\alpha{}_{\beta\gamma}\,\eta_\alpha
-\epsilon hg_{\beta\gamma}
\right]
(x-y)^\beta (x-y)^\gamma
+\frac12
\left[
\Gamma^\alpha{}_{\alpha\beta}+i\epsilon h^{-1}\eta_\beta
\right]
(x-y)^\beta
 \right)
 \mathfrak{p}_p
 \\
+\mathfrak{p}_{p-1}
+O(\|\eta\|^{p-2})
\Bigr\}\,
d\eta\,.
\end{multline}
Excluding the $x$-dependence from the amplitude in \eqref{proof general subprincipal dirac temp 4} by acting with the operator
\begin{equation}
\label{operator Melrose}
\mathcal{S}_\mathrm{right}(\,\cdot\,):=\left.\left[\exp\left(i\frac{\partial^2}{\partial x^\mu\,\partial \eta_\mu}\right)(\,\cdot\,)\right]\right|_{x=y},
\end{equation}
we arrive at
\begin{multline}
\label{proof general subprincipal dirac temp 5}
\frac{1}{(2\pi)^d}\int e^{i(x-y)^\alpha\eta_\alpha}
\Bigl\{
\mathfrak{p}_p
\\
-\frac{i}{2}\left[ \eta_\alpha \,\Gamma^\alpha{}_{\beta\gamma}\,(\mathfrak{p}_p)_{\eta_\beta}\right]_{\eta_\gamma}+\frac\epsilon2 \left[h \,g_{\gamma\beta}\,(\mathfrak{p}_p)_{\eta_\beta} \right]_{\eta_\gamma}
 \\
+\mathfrak{p}_{p-1}
+O(\|\eta\|^{p-2})
\Bigr\}\,
d\eta\,.
\end{multline}
Computing the subprincipal symbol of \eqref{proof general subprincipal dirac temp 5} and using the fact that
$\mathfrak{p}_p=P_\mathrm{prin}=(P_{1/2})_\mathrm{prin}\,$,
we obtain \eqref{relation between invariant subps}. Note that the sign in front of the correction term
\[
\frac{i}{2}(P_\mathrm{prin})_{y^\alpha\eta_\alpha}
\]
is opposite to the usual one, see, for example, \cite[Eqn.~(A.3)]{spin2}. This is due to the fact that in this paper we use the right -- as opposed to left -- quantization.
\end{proof}

\section{Global propagator for the Dirac operator}
\label{Global propagator for the massless Dirac operator}

In this section we will start the analysis of the global propagator for the Dirac operator, specialising Theorem~\ref{theorem propagator for A} to the case $A=W$.

We denote by
\begin{equation}
\label{principal symbol dirac definition}
W_\mathrm{prin}(y,\eta):=\sigma^\alpha(y)\,\eta_\alpha
\end{equation}
the principal symbol of $W$
and by
\begin{equation}
\label{zero order part dirac definition}
W_0(x):=-\frac{i}4\sigma^\alpha(x)\sigma_\beta(x)
\left(
\frac{\partial\sigma^\beta}{\partial x^\alpha}(x)
+\Gamma^\beta{}_{\alpha\gamma}(x)\,\sigma^\gamma(x) \right)
\end{equation}
its zero order part, see Definition~\ref{massless dirac definition}.

The principal symbol $W_\mathrm{prin}(y,\eta)$ has eigenvalues $h^\pm=\pm h$, where
$h$ is given by \eqref{eigenvalues massless dirac},
compare with \eqref{symmetry eigenvalues and eigenvectors}.
This fact, which can be easily established by writing down \eqref{principal symbol dirac definition} in local coordinates, shows that the Dirac operator is indeed elliptic.

It is well-known that the Hamiltonian flow $(x^+(t;y,\eta),\xi^+(t;y,\eta))$ generated by $h$ is (co-)geodesic. The two flows $(x^\pm(t;y,\eta),\xi^\pm(t;y,\eta))$ are related as
\begin{equation}
\label{relation between two geodesic flows}
(x^-(t;y,\eta),\xi^-(t;y,\eta))
=
(x^+(t;y,-\eta),-\xi^+(t;y,-\eta)).
\end{equation}

Our goal is to write down explicitly the positive and negative propagators \eqref{dirac propagator definition equation detailed positive} and \eqref{dirac propagator definition equation detailed negative} in the form \eqref{oscillatory intergral for U^(j)} for a distinguished choice of phase functions. 

To this end, we give the following definition (see also \cite[Section~4]{wave}).

\begin{definition}
\label{definition LC phase functions with epsilon dirac}
We call \emph{positive} ($+$), resp.~\emph{negative} ($-$), \emph{Levi-Civita phase function} the infinitely smooth function $\varphi^{\pm}\in C^\infty(\mathbb{R}\times M \times T'M; \mathbb{C})$ defined by
\begin{equation}
\label{definition positive negative Levi-Civita phase function}
\varphi^\pm(t,x;y,\eta)=\int_{\gamma^\pm} \zeta^\pm \,dz+\frac{i\,\epsilon}{2}h(y,\eta) \dist^2(x,x^\pm(t;y,\eta))
\end{equation}
for $x$ in a geodesic neighbourhood of $x^\pm(t;y,\eta)$ and continued smoothly elsewhere in such a way that $\operatorname{Im} \varphi^\pm\ge0$. Here $\dist$ is the Riemannian geodesic distance, the path of integration $\gamma^\pm$ is the shortest geodesic connecting $x^\pm$ to $x$, $\zeta^\pm$ is the result of parallel transport of $\xi^\pm(t;y,\eta)$ along $\gamma^\pm$ and $\epsilon$ is a positive parameter.
\end{definition}

The positive and negative Levi-Civita phase functions are related as
\begin{equation}
\label{relation between the two Levi-Civita phase functions}
\varphi^-(t,x;y,\eta)
=
-
\overline
{
\varphi^+(t,x;y,-\eta)
}.
\end{equation}

Let us point out that the way one continues $\varphi^\pm$ outside a neighbourhood of the flow does not affect the singular part of the propagators. The choice of a different smooth continuation results in an error $\overset{\mod \Psi^{-\infty}}{=}0$, as one can show by a straight\-forward (non)stationary phase argument.

\begin{remark}
The time-independent phase function $\phi$ introduced in the previous section is the restriction to $t=0$
of the phase functions $\varphi^\pm$,
\begin{equation}
\phi(x;y,\eta)=\varphi^+(0,x;y,\eta)=\varphi^-(0,x;y,\eta).
\end{equation}
\end{remark}

It is easy to see that the positive and negative Levi-Civita phase functions satisfy conditions (i), (ii) and (iv) from Theorem~\ref{theorem propagator for A}.
Furthermore, \cite[Corollary 2.4.5]{SaVa} implies that condition (iii) is also
satisfied.
Hence, Theorem~\ref{theorem propagator for A} ensures that the integral kernel of $U^{\pm}$ can be written as a single oscillatory integral 
\begin{equation}
\label{integral kernel of U pm}
u^\pm(t,x,y):=\frac{1}{(2\pi)^3}\int_{T'_yM} e^{i \varphi^\pm(t,x;y,\eta)}\,\mathfrak{a}^\pm(t;y,\eta)\,\chi^\pm(t,x;y,\eta)\,w^\pm(t,x;y,\eta)\,d\eta,
\end{equation}
where $\varphi^\pm$ is the positive/negative Levi-Civita phase function.


\begin{definition}
\label{definition full symbols dirac propagator}
We define \emph{the full symbol of the positive} (resp.~\emph{negative}) \emph{propagator} to be the scalar matrix-function $\mathfrak{a}^+$ (resp.~$\mathfrak{a}^-$), obtained through the algorithm described in Section~\ref{The algorithm} with Levi-Civita phase functions.

We define the \emph{subprincipal symbol of the positive} (resp.~\emph{negative}) \emph{propagator} to be the scalar matrix-function $\mathfrak{a}^+_{-1}$ (resp.~$\mathfrak{a}^-_{-1}$) obtained the same way.
\end{definition}

As to the principal symbol, this object was defined earlier,
see Definition~\ref{definition principal symbol general case}.

We stress that the mathematical objects contained in the above definition are uniquely and invariantly defined. They only depend on the phase functions which, in turn, originate from the geometry of $M$ in a coordinate-free covariant manner, cf.~Definition~\ref{definition LC phase functions with epsilon dirac}.

To the best of our knowledge, there is no accepted definition of full symbol or subprincipal symbol for a Fourier integral operator available in the literature to date. The geometric nature of our construction allows us to provide invariant definitions of full and subprincipal symbol of the Dirac propagator, analyse them, and give explicit formulae. This paper, alongside \cite{wave}, aims to build towards an invariant theory for pseudodifferential and Fourier integral operators on manifolds.

\

Before moving on to computing the principal and subprincipal symbols of the positive (resp.~negative) Dirac propagator, an important remark is in order. In addition to what was discussed in Section~\ref{Preliminary results for general first order systems} for the general case, the construction of the Dirac propagator has to be consistent with the gauge transformation \eqref{special unitary matrix-function}, \eqref{transformation of the Dirac operator under change of frame}. In particular, the action of the gauge transformation needs to be carefully accounted for by the construction process.

The transformation \eqref{transformation of the Dirac operator under change of frame} leads to the transformation 
\begin{equation*}
\label{gauge transformation at the level of the symbol}
\mathfrak{a}^\pm(t;y,\eta)\mapsto G^*(x)\, \mathfrak{a}^\pm(t;y,\eta)\, G(y).
\end{equation*}
in the oscillatory integral \eqref{integral kernel of U pm}.
Note that this introduces an $x$-dependence
which has to be handled by means of amplitude-to-symbol reduction \eqref{definition amplitude to symbol with j}.

\subsection{Transport equations}
\label{Transport equations dirac}

By acting with the Dirac operator $W$ on \eqref{integral kernel of U pm} in the variable $x$ and dropping the cut-off, we obtain
\begin{equation*}
\label{transport equations formula 1}
W u^\pm(t,x,y)=\frac{1}{(2\pi)^3}\int_{T'_yM} e^{i \varphi^\pm(t,x;y,\eta)}\,a^\pm(t;y,\eta)\,w^\pm(t,x;y,\eta)\,d\eta,
\end{equation*}
where
\begin{equation*}
\label{transport equations formula 2}
\begin{split}
a&=-i e^{-i\varphi^\pm} (w^\pm)^{-1} \partial_t \left(e^{i\varphi^\pm}\,\mathfrak{a}^\pm  \,w^\pm\right)+ \left[-i e^{-i\varphi^\pm} (w^\pm)^{-1}\sigma^\alpha \partial_{x^\alpha} \left(e^{i\varphi^\pm} \,w^\pm \right) + W_0\right]\mathfrak{a}^\pm\\
&= \left(\varphi^\pm_t+\sigma^\alpha \varphi^\pm_{x^\alpha}\right)\mathfrak{a}^\pm -i \mathfrak{a}^\pm_t+ \left[-i (w^\pm)^{-1}\left( w^\pm_t+\sigma^\alpha w^\pm_{x^\alpha} \right)+W_0 \right]\mathfrak{a}^\pm.
\end{split}
\end{equation*}
Put
\begin{equation}
\label{transport equations formula 3}
a\sim\sum_{k=-1}^{+\infty} a_{-k},
\end{equation}
where 
\begin{equation}
\label{transport equations formula 4}
a^\pm_1:=\left(\varphi^\pm_t+W_\mathrm{prin}(x,\varphi_x^\pm)\right)\mathfrak{a}^\pm_0
\end{equation}
and
\begin{equation}
\label{transport equations formula 5}
a^\pm_{-k}:= \left(\varphi^\pm_t+W_\mathrm{prin}(x,\varphi_x^\pm)\right)\mathfrak{a}^\pm_{-k-1} -i (\mathfrak{a}^\pm_{-k})_t+ \left[-i (w^\pm)^{-1}\left( w^\pm_t+\sigma^\alpha w^\pm_{x^\alpha} \right)+W_0 \right]\mathfrak{a}^\pm_{-k}
\end{equation}
for $k\ge 0$.
Note that the $a^\pm_{-k}$, $k\ge -1$, are positively homogeneous in momentum of degree $-k$.

Our transport equations read
\begin{gather}
\mathfrak{S}^\pm_0 a^\pm_1=0,
\label{first transport equation dirac}
\\
\mathfrak{S}^\pm_{-1} a^\pm_1+ \mathfrak{S}^\pm_{0}a^\pm_0=0,
\label{second transport equation dirac}
\\
\mathfrak{S}^\pm_{-2} a^\pm_1+ \mathfrak{S}^\pm_{-1}a^\pm_0+\mathfrak{S}^\pm_0 a^\pm_{-1}=0,
\label{third transport equation dirac}
\\\nonumber
\dots
\end{gather}

Recalling that $v^\pm$ are the normalised eigenvectors of $W_\mathrm{prin}$ corresponding to the eigenvalues $\pm h$, denote by
\begin{equation}
\label{spectral projections principal symbol}
P^\pm(y,\eta):= v^\pm(y,\eta)\,[v^\pm(y,\eta)]^*
\end{equation}
the spectral projections along the eigenspaces spanned by $v^\pm$.
Of course,
\begin{equation}
\label{spectral decomposition principal symbol dirac}
W_\mathrm{prin}=h\, (P^+-P^-),
\end{equation}
\begin{equation}
\label{spectral decomposition identity dirac}
\mathrm{Id}=P^++P^-,
\end{equation}
and
\begin{equation}
\label{projections in terms of principal symbol dirac}
P^\pm=\frac12\left( \mathrm{Id}\pm \frac{W_\mathrm{prin}}{h} \right).
\end{equation}

Let us label the transport equations with nonnegative integer numbers in increasing order, so that \eqref{first transport equation dirac} is the zeroth transport equation, \eqref{second transport equation dirac} is the first transport equation and so on. Direct inspection of \eqref{transport equations formula 4} and \eqref{transport equations formula 5} reveals that

\begin{itemize}
\item multiplication of the $n$-th transport equation by $P^\mp(x^\pm, \xi^\pm)$ on the left allows one to determine
\begin{equation}
\label{transport equations formula 6}
P^{\mp}(x^\pm,\xi^\pm)\mathfrak{a}^\pm_{-n}(t;y,\eta), 
\qquad n\ge0,
\end{equation}
algebraically;

\item multiplication of the $(n+1)$-th transport equation by $P^\pm(x^\pm, \xi^\pm)$ on the left and the use of \eqref{transport equations formula 6} allows one to determine
\begin{equation}
\label{transport equations formula 7}
P^{\pm}(x^\pm,\xi^\pm)\mathfrak{a}^\pm_{-n}(t;y,\eta), 
\qquad n\ge0,
\end{equation}
upon solving a matrix ordinary differential equation in the variable $t$.
\end{itemize}
Summing up \eqref{transport equations formula 6} and \eqref{transport equations formula 7} one obtains $\mathfrak{a}^\pm_{-k}(t;y,\eta)$, in view of \eqref{spectral decomposition identity dirac}.

\subsection{Pseudodifferential operators $U^\pm(0)$}
\label{identity operator dirac}

This subsection is devoted to the examination of operators $U^\pm(0)$.
We need to examine these operators because,
as explained in subsection~\ref{The algorithm}, their full symbols determine the initial conditions
$\mathfrak{a}^\pm_{-k}(0;y,\eta)$ for our transport equations.

We have
\begin{equation}
\label{4 December 2019 equation 1}
U^\pm(0)=\theta(\pm W),
\end{equation}
where
\begin{equation*}
\label{4 December 2019 equation 2}
\theta(\lambda)
:=
\begin{cases}
1\quad\text{for}\quad\lambda>0,
\\
0\quad\text{for}\quad\lambda\le0.
\end{cases}
\end{equation*}
We see that the operators $U^\pm(0)$ are self-adjoint pseudodifferential operators
of order zero, orthogonal projections onto the positive/negative eigenspaces of the operator $W$.
The operator
$\mathrm{Id}-U^+(0)-U^-(0)$
is the orthogonal projection onto the nullspace of the operator $W$, hence
\begin{equation*}
\label{sum of propagators at time zero}
U^+(0)+U^-(0)\overset{\mod \Psi^{-\infty}}{=} \mathrm{Id}.
\end{equation*}

The principal symbols of the operators $U^\pm(0)$ read
\begin{equation}
\label{principal symbols of propagators at time zero}
[U^\pm(0)]_\mathrm{prin}=P^\pm(y,\eta),
\end{equation}
where $P^\pm$ are the orthogonal projections onto the positive/negative eigenspaces
of the principal symbol of the operator $W$, see \eqref{spectral projections principal symbol}.

The analysis of the \emph{full} symbol of $U^\pm(0)$ is a delicate task which was investigated,
to a certain extent and in a somewhat different setting, in \cite{CDV}.
In order to develop the ideas from \cite{CDV} we have to address a number of issues.
\begin{itemize}
\item
We are now dealing with scalar fields as opposed to half-densities.
\item
We are now making full use of Riemannian structure.
\item
We are now working in the special setting of a system of two equations in dimension three with trace-free principal symbol.
\item
Unlike \cite{CDV,jst_part_b}, we are aiming to evaluate the actual matrix-functions
$[U^\pm(0)]_\mathrm{sub}$ and not only their traces.
\end{itemize}

In order to calculate the subprincipal symbols of the pseudodifferential operators $U^\pm(0)$ 
we will need the following auxiliary result.

\begin{theorem}
\label{theorem covariant derivatives gauge transformation}
Fix a point $y\in M$ and let $\{\widetilde e\}_{j=1}^3$ be a framing on $M$.
Let $G\in C^\infty(M;SU(2))$
be a gauge transformation such that $G(y)=\mathrm{Id}$ and let
\begin{equation}
\label{new framing generated by G}
{e}_j{}^\alpha:=\frac12 \operatorname{tr}(s_j\,G^*\,s^k\,G)\,\widetilde{e}_k{}^\alpha.
\end{equation}
Then
\begin{equation}
\label{covariant derivative of gauge transformation G}
\nabla_\alpha G(y)=-\frac{i}{2} \left[\overset{*}{K}{}_{\alpha\beta}(y)-\overset{*}{\widetilde{K}}{}_{\alpha\beta}(y)\right]\,\sigma^\beta(y),
\end{equation}
where $K$ (resp.~$\widetilde K$) is the contorsion tensor of the Weitzenb\"ock connection
(see Appendix~\ref{Geometric properties of the Weitzenb\"ock connection})
associated with the framing $\{ e_j\}_{j=1}^3$
(resp.~$\{\widetilde e_j\}_{j=1}^3$), the star stands for the Hodge dual
applied in the first and third indices, see formula \eqref{hodge star contorsion}, and $\sigma^\alpha(y)$ is defined by \eqref{Pauli matrices projection}.
\end{theorem}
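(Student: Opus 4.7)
The plan is to expand $\nabla_\alpha G(y)$ in the basis of $\mathfrak{su}(2)$ spanned by $\{-\tfrac{i}{2}\sigma^\beta(y)\}_{\beta=1,2,3}$ and then identify the coefficients by differentiating the framing relation \eqref{new framing generated by G} and matching the result with the definition of the contorsion tensor from Appendix~\ref{Geometric properties of the Weitzenb\"ock connection}. Since $G$ carries no tensor indices, $\nabla_\alpha G$ agrees with the ordinary partial derivative $\partial_\alpha G$. Differentiating the pointwise constraints $G^*G=\mathrm{Id}$ and $\det G=1$ at $y$, where $G(y)=\mathrm{Id}$, yields $(\partial_\alpha G(y))^*=-\partial_\alpha G(y)$ and $\operatorname{tr}\partial_\alpha G(y)=0$, so $\nabla_\alpha G(y)\in\mathfrak{su}(2)$. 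Because $\{\sigma^\beta(y)\}_{\beta=1}^3$ is a basis of the trace-free Hermitian $2\times 2$ matrices, we may write
\begin{equation*}
\nabla_\alpha G(y)=-\frac{i}{2}\,c_{\alpha\beta}(y)\,\sigma^\beta(y)
\end{equation*}
for unique real coefficients $c_{\alpha\beta}(y)$, and the task reduces to showing $c_{\alpha\beta}(y)=\overset{*}{K}{}_{\alpha\beta}(y)-\overset{*}{\widetilde K}{}_{\alpha\beta}(y)$.

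Next I would covariantly differentiate \eqref{new framing generated by G} and evaluate at $y$. Using $G(y)=\mathrm{Id}$, the Pauli identity $\operatorname{tr}(s_j\,s^k)=2\delta_j{}^k$ and the Leibniz rule, one obtains
\begin{equation*}
\nabla_\alpha e_j{}^\mu(y)-\nabla_\alpha\widetilde e_j{}^\mu(y)=\frac12\operatorname{tr}\bigl(s_j\bigl[(\nabla_\alpha G(y))^*\,s^k+s^k\,\nabla_\alpha G(y)\bigr]\bigr)\,\widetilde e_k{}^\mu(y).
\end{equation*}
Substituting the expansion from the previous step and invoking the Pauli commutator identity $[s^k,s^\ell]=2i\,\epsilon^{k\ell}{}_m s^m$, the Hermitian part cancels in the symmetrised bracket and the right-hand side collapses to an $\epsilon$-contraction of $c_{\alpha\gamma}$ with the frame at $y$. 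The Christoffel contributions on the left cancel because they depend only on the metric and $e_j(y)=\widetilde e_j(y)$, so one is left with a direct linear relation between the frame discrepancy $\nabla_\alpha(e_j-\widetilde e_j)(y)$ and the unknowns $c_{\alpha\beta}(y)$.

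Finally I would invoke the definition of the contorsion tensor of the Weitzenb\"ock connection from Appendix~\ref{Geometric properties of the Weitzenb\"ock connection}: this connection declares its framing parallel, so its contorsion is algebraically expressible in terms of $\nabla e_j$ (via the Levi-Civita connection), and analogously for $\widetilde K$ in terms of $\nabla \widetilde e_j$. At $y$, where $e_j=\widetilde e_j$, the difference $K(y)-\widetilde K(y)$ depends only on $\nabla_\alpha e_j(y)-\nabla_\alpha\widetilde e_j(y)$, which is precisely the quantity computed in the previous step. Applying the Hodge dual in the first and third indices converts the $\epsilon$-contraction coming from the Pauli commutator into a direct identification, giving the desired formula for $c_{\alpha\beta}(y)$. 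The principal difficulty I foresee is the index bookkeeping: one must reconcile the sign and normalisation conventions adopted for the contorsion tensor and its Hodge dual in the appendix with the $\epsilon$-factor and numerical constant $2i$ produced by the Pauli commutator, and verify that the two $\epsilon$-contractions cancel to yield exactly the prefactor $-\tfrac{i}{2}$ of \eqref{covariant derivative of gauge transformation G} with no residual signs; once this matching is settled, the remainder of the argument is purely linear-algebraic.
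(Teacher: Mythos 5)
Your proposal follows essentially the same route as the paper's proof in Appendix~\ref{appendix proof covariant derivative gauge transformation}: differentiate the framing relation \eqref{new framing generated by G} at $y$, express $\nabla_\alpha G(y)$ as a trace-free anti-Hermitian matrix (the paper parameterizes $G(x)=e^{is^kA_k(x)}$ so that $G_{x^\alpha}(y)=is^kF_{k\alpha}$, which is equivalent to your expansion in the $\sigma^\beta(y)$-basis at first order), relate the result to the difference of Weitzenb\"ock connection coefficients, convert to (con)torsion, and Hodge-dualize. You correctly identify the remaining work as the sign and normalization bookkeeping that links the Pauli-matrix traces and $\varepsilon$-contractions to the conventions \eqref{contorsion definition}--\eqref{contorsion star vs torsion star}; this is precisely the content of the paper's displayed chain of formulae and must be carried out explicitly to constitute a complete proof.
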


\begin{proof}
The proof is provided in Appendix~\ref{appendix proof covariant derivative gauge transformation}.
\end{proof}

\begin{remark}
\label{remark about w and tilde w}
Let $\{\widetilde e\}_{j=1}^3$ and $\{e\}_{j=1}^3$ be a pair of framings related in accordance with
\eqref{new framing generated by G},
and let $\widetilde W$ and $W$ be the corresponding Dirac operators,
see Definition~\ref{massless dirac definition}.
Then
\begin{equation}
\label{relation between w and tilde w}
W=G^*\widetilde WG.
\end{equation}
\end{remark}

The following theorem is the main result of this subsection.

\begin{theorem}
\label{theorem about subprincipal symbols of propagators at zero}
We have
\begin{equation}
\label{4 December 2019 equation 6}
[U^\pm(0)]_\mathrm{sub}(y,\eta)
=
\pm
\frac1{4(h(y,\eta))^3}\,
\overset{*}{T}{}^{\alpha\beta}(y)\,
\eta_\alpha\eta_\beta
\,\mathrm{Id}\,,
\end{equation}
where $T$ is the torsion tensor of the Weitzenb\"ock connection
(see Appendix~\ref{Geometric properties of the Weitzenb\"ock connection})
associated with the framing $\{ e_j\}_{j=1}^3$
encoded within the Dirac operator $W$
(see Definition~\ref{massless dirac definition})
and the star stands for the Hodge dual
applied in the second and third indices, see formula \eqref{hodge star torsion}.
\end{theorem}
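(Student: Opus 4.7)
The plan is to exploit gauge covariance together with the symbolic calculus for spectral projectors of first-order elliptic systems. First, since $U^+(0) + U^-(0) \overset{\mod \Psi^{-\infty}}{=} \mathrm{Id}$ (established just before the statement) and the identity operator has zero subprincipal symbol, we have $[U^+(0)]_\mathrm{sub} = -[U^-(0)]_\mathrm{sub}$; this accounts for the overall $\pm$ sign and reduces the task to computing $[U^+(0)]_\mathrm{sub}(y,\eta)$ at an arbitrary $y \in M$. I would then fix $y$, take normal coordinates centred there and, using the gauge freedom \eqref{special unitary matrix-function}--\eqref{transformation of the Dirac operator under change of frame}, pick an $\mathrm{SU}(2)$ gauge $G$ with $G(y)=\mathrm{Id}$ that normalises the framing at $y$ (for instance, so that a prescribed combination of frame derivatives, or the contorsion itself, vanishes at $y$). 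Theorem~\ref{theorem covariant derivatives gauge transformation} controls how the Weitzenb\"ock contorsion responds to this choice; together with the transformation rule $U^+(0) \mapsto G^* U^+(0) G$, it ensures that both sides of \eqref{4 December 2019 equation 6} transform consistently, so it suffices to verify the identity in one convenient gauge.

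Next, I would extract a working formula for $[U^+(0)]_\mathrm{sub}$ from the characterising relations of $U^+(0)$: idempotence $U^+(0)\,U^+(0) \overset{\mod \Psi^{-\infty}}{=} U^+(0)$, self-adjointness, and the intertwining $W\,U^+(0) \overset{\mod \Psi^{-\infty}}{=} \sqrt{W^2}\,U^+(0)$. Expanding these as symbol asymptotics and reading off the order $-1$ terms yields linear algebraic equations on $[U^+(0)]_\mathrm{sub}$ in terms of $W_\mathrm{prin}$, $W_\mathrm{sub}$ and Poisson-bracket corrections such as $\{W_\mathrm{prin}, P^+\}$. The $P^-$-component of $[U^+(0)]_\mathrm{sub}$ is determined algebraically from the intertwining, while the $P^+$-component is pinned down by idempotence and self-adjointness. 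This derivation parallels \cite[Section~4]{CDV}, but is adapted from half-densities to scalar functions by invoking Theorem~\ref{theorem subprincipal symbols for P dirac} applied to $W$ itself, which converts between the $g$-subprincipal symbol of $W$ and its standard (half-density) subprincipal symbol.

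Finally I would substitute $W_\mathrm{prin}(y,\eta)=\sigma^\alpha(y)\eta_\alpha$, the explicit eigenvectors $v^\pm(y,\eta)$ and $W_\mathrm{sub}$ (computed from $W_0$ in \eqref{zero order part dirac definition}) into the derived formula, and carry out the Pauli-matrix algebra using $s^j s^k=\delta^{jk}\mathrm{Id}+i\,\varepsilon^{jk\ell}s^\ell$. I expect the main obstacle to be twofold: first, verifying that the emerging matrix structure collapses to a scalar multiple of $\mathrm{Id}$ rather than of $P^\pm$ (which must in fact be the case, because $[U^+(0)]_\mathrm{sub}=-[U^-(0)]_\mathrm{sub}$ while both symbols are Hermitian); and second, the geometric identification, i.e.~recognising the resulting tensorial expression --- a specific combination of frame derivatives and Christoffel symbols contracted with two copies of $\eta$ --- as precisely $\overset{*}{T}{}^{\alpha\beta}(y)\eta_\alpha\eta_\beta/(4 h^3)$. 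This last step will rely on the identities in Appendix~\ref{Geometric properties of the Weitzenb\"ock connection}, specifically on how the Hodge dual of the Weitzenb\"ock torsion in dimension three packages the antisymmetric part of the frame derivatives. Extracting the precise coefficient $1/4$ and the correct overall sign is where the bookkeeping will be most delicate.
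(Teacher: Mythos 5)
Your proposal is correct in outline but follows a genuinely different route from the paper. The paper exploits a structural trick specific to the Dirac operator: in normal geodesic coordinates with $e_j{}^\alpha(y)=\delta_j{}^\alpha$, one chooses $G$ with $G(y)=\mathrm{Id}$ so that $W$ agrees with $G^*\widetilde WG$ to second order in $x$ at $y$, where $\widetilde W=-is^\alpha\partial_\alpha$ is a constant-coefficient Dirac operator in Euclidean space. After asserting (without detailed proof) a locality property --- that $[\theta(\pm W)]_\mathrm{sub}$ at the point depends only on the second-order jet of $W_\mathrm{prin}$ and on $W_\mathrm{sub}$ there --- the problem collapses to computing $\theta(\pm G^*\widetilde WG)=G^*\,\theta(\pm\widetilde W)\,G$, and $\theta(\pm\widetilde W)$ is an explicit Fourier multiplier with symbol $P^\pm(\eta)$ and vanishing subprincipal symbol. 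Reducing the amplitude $G^*(x)\,P^\pm(\eta)\,G(z)$ and applying the Duistermaat--H\"ormander formula gives an expression in $G_{x^\mu}$, which Theorem~\ref{theorem covariant derivatives gauge transformation} converts to $\tfrac{i}{2}\overset{*}{K}{}_{\mu\nu}\sigma^\nu$; Pauli algebra and \eqref{contorsion star vs torsion star} then give \eqref{4 December 2019 equation 6} directly. Your proposal instead extracts $[U^+(0)]_\mathrm{sub}$ from the algebraic characterising relations (idempotence, self-adjointness, commutation/intertwining with $W$), in the spirit of \cite{CDV}. This is sound: the diagonal blocks $P^\pm[U^+(0)]_\mathrm{sub}P^\pm$ do come from idempotence and self-adjointness, and the off-diagonal block from commutation with $W$ (the commutator $[W,U^+(0)]=0$ is cleaner to expand than $W\,U^+(0)=\sqrt{W^2}\,U^+(0)$, since it avoids $[\sqrt{W^2}]_\mathrm{sub}$, but either works). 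What your route buys is generality --- it applies verbatim to any first-order elliptic system with simple eigenvalues and does not rely on the unproved locality claim in the paper --- at the cost of considerably heavier bookkeeping, especially since \cite{CDV,jst_part_b} only carried this out for traces, whereas the theorem is a statement about the full matrix. What the paper's route buys is brevity: the conjugation trick reduces everything to a one-line computation of the subprincipal symbol of a conjugated multiplier. Your first observation, that $[U^+(0)]_\mathrm{sub}=-[U^-(0)]_\mathrm{sub}$, is correct and useful but the paper does not need it, since the $\pm$ sign drops out directly from $P^\pm(\eta)=\tfrac12(\mathrm{Id}\pm\|\eta\|^{-1}s^\beta\eta_\beta)$.
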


\begin{proof}
Let us fix a point $y\in M$ and choose normal geodesic coordinates $x$ centred at $y$ such that
$e_j{}^\alpha(y)=\delta_j{}^\alpha\,$. Consider the (local) operator with constant coefficients
\begin{equation}
\label{dirac with constant coefficients}
\widetilde W:=-is^\alpha\,\frac{\partial}{\partial x^\alpha}\,,
\end{equation}
where the $s^\alpha$ are the standard Pauli matrices \eqref{Pauli matrices basic}.
Let us choose a smooth special unitary $2\times2$ matrix-function $G$ such that
\begin{equation*}
\label{G(0) is identity}
G(0)=\mathrm{Id},
\end{equation*}
\begin{equation*}
\label{relation between two principal symbols}
[W]_\mathrm{prin}=[G^*\widetilde WG]_\mathrm{prin}+O(\,\|\eta\|\,\|x\|^2\,)\,,
\end{equation*}
compare with \eqref{relation between w and tilde w}.
It is easy to see that such a matrix-function $G(x)$ exists and is defined uniquely modulo $O(\|x\|^2)$.

Let us now compare the subprincipal symbols of the pseudodifferential operators
$\theta(\pm W)$ and $\theta(\pm G^*\widetilde WG)$,
with $G^*\widetilde WG$ understood as an operator acting in Euclidean space
(constant metric tensor $g_{\alpha\beta}(x)=\delta_{\alpha\beta}$).
It can be shown that at the origin we have
\[
[W]_\mathrm{sub}(0,\eta)=[G^*\widetilde WG]_\mathrm{sub}(0,\eta).
\]
Thus, the proof of the
Theorem~\ref{theorem about subprincipal symbols of propagators at zero} has been
reduced to the case when we are in Euclidean space and the operator $W$
is given by formulae
\eqref{relation between w and tilde w} and \eqref{dirac with constant coefficients}.

We have
\begin{equation}
\label{dima1}
\theta(\pm\widetilde W)
=
\frac1{(2\pi)^3}
\int_{T'\mathbb{R}^3}
e^{i(x-z)^\alpha\eta_\alpha}\,P^\pm(\eta)\,(\,\cdot\,)\,dz\,d\eta\,,
\end{equation}
where
\begin{equation}
\label{dima2}
P^\pm(\eta)
=
\frac12
\left(
\mathrm{Id}
\pm
\frac1{\|\eta\|}s^\beta\eta_\beta
\right).
\end{equation}
Formulae \eqref{dima1} and \eqref{dima2} imply that
\begin{equation*}
\label{dima3}
\theta(\pm G^*\widetilde WG)
=
\frac1{(2\pi)^3}
\int_{T'\mathbb{R}^3}
e^{i(x-z)^\alpha\eta_\alpha}\,Q^\pm(x,z,\eta)\,(\,\cdot\,)\,dz\,d\eta\,,
\end{equation*}
where
\begin{equation*}
\label{dima4}
Q^\pm(x,z,\eta)
=
G^*(x)\,P^\pm(\eta)\,G(z)
=
\frac12\,
G^*(x)
\left(
\mathrm{Id}
\pm
\frac1{\|\eta\|}s^\beta\eta_\beta
\right)
G(z)\,.
\end{equation*}
Excluding the $z$-dependence from the amplitude $Q^\pm$ by acting with the operator
\begin{equation*}
\label{operator Melrose other way round}
\mathcal{S}_\mathrm{left}(\,\cdot\,):=\left.\left[\exp\left(-i\frac{\partial^2}{\partial z^\mu\,\partial \eta_\mu}\right)(\,\cdot\,)\right]\right|_{z=x},
\end{equation*}
compare with \eqref{operator Melrose},
we arrive at
\begin{equation*}
\label{dima5}
\theta(\pm G^*\widetilde WG)
=
\frac1{(2\pi)^3}
\int_{T'\mathbb{R}^3}
e^{i(x-z)^\alpha\eta_\alpha}\,\mathcal{Q}^\pm(x,\eta)\,(\,\cdot\,)\,dz\,d\eta\,,
\end{equation*}
where
\begin{equation}
\label{dima6}
\mathcal{Q}^\pm(x,\eta)
=
\mathcal{Q}_0^\pm(x,\eta)
+
\mathcal{Q}_{-1}^\pm(x,\eta)
+O(\|\eta\|^{-2})\,,
\end{equation}
\begin{equation}
\label{dima7}
\mathcal{Q}_0^\pm(x,\eta)
=
\frac12\,
G^*(x)
\left(
\mathrm{Id}
\pm
\frac1{\|\eta\|}s^\beta\eta_\beta
\right)
G(x)\,,
\end{equation}
\begin{equation}
\label{dima8}
\mathcal{Q}_{-1}^\pm(x,\eta)
=
-\,
\frac i2\,
G^*(x)
\left(
\mathrm{Id}
\pm
\frac1{\|\eta\|}s^\beta\eta_\beta
\right)_{\eta_\mu}
G_{x^\mu}(x)\,.
\end{equation}

In the Euclidean setting the standard formula 
\cite[Eqn.~(5.2.8)]{DuHo}
for the subprincipal symbol reads
\begin{equation}
\label{dima9}
[\theta(\pm G^*\widetilde WG)]_\mathrm{sub}
=
\mathcal{Q}_{-1}^\pm
+
\frac i2(\mathcal{Q}_0^\pm)_{x^\mu\eta_\mu}\,.
\end{equation}
Substituting \eqref{dima7} and \eqref{dima8} into \eqref{dima9} and setting $x=0$, we get
\begin{multline}
\label{dima10}
[\theta(\pm G^*\widetilde WG)]_\mathrm{sub}
=
\pm
\frac i4
\left[
G^*_{x^\mu}
\left(
\frac1{\|\eta\|}s^\beta\eta_\beta
\right)_{\eta_\mu}
-
\left(
\frac1{\|\eta\|}s^\beta\eta_\beta
\right)_{\eta_\mu}
G_{x^\mu}
\right]
\\
=
\pm
\frac{i(
\delta_\beta{}^\mu\|\eta\|^2-\eta_\beta\,\eta^\mu)
}
{4\|\eta\|^3}
\left[
G^*_{x^\mu}
s^\beta
-
s^\beta
G_{x^\mu}
\right].
\end{multline}
Theorem~\ref{theorem covariant derivatives gauge transformation}
tells us that
$G_{x^\mu}=\frac{i}{2}\overset{*}{K}{}_{\mu\nu}\,s^\nu$.
Substituting this into \eqref{dima10}, and using standard properties of Pauli matrices and \eqref{contorsion star vs torsion star}, we get
\begin{equation*}
\begin{split}
\label{dima11}
[\theta(\pm G^*\widetilde WG)]_\mathrm{sub}
&
=
\pm
\frac{
\delta_\beta{}^\mu\|\eta\|^2-\eta_\beta\,\eta^\mu
}
{8\|\eta\|^3}
\left[
s^\nu
s^\beta
+
s^\beta
s^\nu
\right]
\overset{*}{K}{}_{\mu\nu}
\\
&
=
\pm
\frac{
1
}
{4\|\eta\|^3}
\left(
\overset{*}{K}{}^\gamma{}_\gamma \delta_{\mu\nu}
-\overset{*}{K}{}_{\mu\nu}
\right)\eta^\mu\eta^\nu
\operatorname{Id}
\\
&
=
\pm
\frac{1}{4\|\eta\|^3}
\overset{*}{T}{}_{\mu\nu}
\,\eta^\mu\,\eta^\nu
\operatorname{Id}.
\end{split}
\end{equation*}

The above argument combined with \eqref{4 December 2019 equation 1} yields \eqref{4 December 2019 equation 6}.
\end{proof}

Observe that formula \eqref{4 December 2019 equation 6} implies
\begin{equation*}
\label{4 December 2019 equation 6 trace}
\operatorname{tr}\,
[U^\pm(0)]_\mathrm{sub}(y,\eta)
=
\pm
\frac1{2(h(y,\eta))^3}\,
\overset{*}{T}{}^{\alpha\beta}(y)\,
\eta_\alpha\eta_\beta\,,
\end{equation*}
which agrees with
\cite[formula (1.20)]{CDV} and \cite[formula (4.1) with $\mathbf{c}=+1$]{jst_part_b}.

\section{Principal symbol of the global Dirac propagator}
\label{Principal symbol of the global Dirac propagator}

In this section we provide an explicit geometric characterisation of the principal symbols of the positive and negative Dirac propagators.

\begin{theorem}
\label{theorem small time expansion principal symbol}
The principal symbols of the positive and negative Dirac propagators are
\begin{equation}
\label{principal symbols dirac propagator geometric formula}
\mathfrak{a}^\pm_0(t;y,\eta)=\zeta^\pm(t;y,\eta)\,[v^\pm(y,\eta)]^*,
\end{equation}
where $\zeta^\pm(t;y,\eta)$ is the parallel transport of $v^\pm(y,\eta)$ along $x^\pm$ with respect to the spin connection,
i.e.
\begin{equation}
\label{definition of parallel transport of spinor}
\left( \frac{d}{dt} +[\dot{x}^\pm]^\alpha\,\frac14 \sigma_\beta \left(\frac{\partial\sigma^\beta}{\partial x^\alpha}+ \Gamma^\beta{}_{\alpha\gamma}\sigma^\gamma \right) \right)
\zeta^\pm=0,
\qquad
\zeta^\pm|_{t=0}=v^\pm.
\end{equation}
\end{theorem}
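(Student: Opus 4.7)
My plan is to apply the algorithm of subsection~\ref{Transport equations dirac} to the principal symbol, starting from the initial datum
\begin{equation*}
\mathfrak{a}_0^\pm(0;y,\eta)=[U^\pm(0)]_\mathrm{prin}(y,\eta)=P^\pm(y,\eta)=v^\pm(y,\eta)[v^\pm(y,\eta)]^*
\end{equation*}
supplied by \eqref{principal symbols of propagators at time zero}. First I would unpack the zeroth transport equation \eqref{first transport equation dirac}. Differentiating conditions (i)--(ii) of Theorem~\ref{theorem propagator for A} in $t$ and using the homogeneity of $h$, one obtains $\varphi_t^\pm|_{x=x^\pm}=-\dot{x}^{\pm\alpha}\xi_\alpha^\pm=\mp h(y,\eta)$, and combining this with the spectral decomposition \eqref{spectral decomposition principal symbol dirac} gives the key identity
\begin{equation*}
\bigl(\varphi_t^\pm+W_\mathrm{prin}(x,\varphi_x^\pm)\bigr)\big|_{x=x^\pm}=\mp 2h\,P^\mp(x^\pm,\xi^\pm).
\end{equation*}
Consequently \eqref{first transport equation dirac} collapses to the algebraic constraint $P^\mp(x^\pm,\xi^\pm)\,\mathfrak{a}_0^\pm=0$, forcing $\mathfrak{a}_0^\pm$ into the rank-one form $v^\pm(x^\pm,\xi^\pm)\,r^\pm(t;y,\eta)$ for some row vector $r^\pm$.

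Next I would extract an ODE for $\mathfrak{a}_0^\pm$ along the cogeodesic $(x^\pm,\xi^\pm)$ from the first transport equation \eqref{second transport equation dirac}. Left-multiplication by $P^\pm(x^\pm,\xi^\pm)$ annihilates the contribution of $\mathfrak{a}_{-1}^\pm$ hidden in $\mathfrak{S}_{-1}^\pm a_1^\pm$, because $P^\pm P^\mp=0$. The surviving pieces---coming from the weight $w^\pm$, from the subleading part of $\mathfrak{S}_{-1}^\pm$ acting on the phase-derivative factor, and from the zero-order term $W_0$ in \eqref{zero order part dirac definition}---should combine, by design of the Levi-Civita phase function \eqref{definition positive negative Levi-Civita phase function}, into precisely the spin connection operator $\dot{x}^{\pm\alpha}\,\tfrac14\sigma_\beta(\partial_\alpha\sigma^\beta+\Gamma^\beta{}_{\alpha\gamma}\sigma^\gamma)$ acting on the column of $\mathfrak{a}_0^\pm$ from the left. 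Since this operator touches only columns, $r^\pm(t;y,\eta)$ is a first integral, fixed by the initial datum to be the constant $[v^\pm(y,\eta)]^*$.

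A cleaner alternative is the reverse ansatz: set $\widetilde{\mathfrak{a}}_0^\pm:=\zeta^\pm\,[v^\pm(y,\eta)]^*$ and verify directly that it satisfies both the zeroth and the first transport equations. The zeroth equation holds because spin parallel transport preserves the eigenspaces of $W_\mathrm{prin}$ along the cogeodesic flow---a consequence of $\nabla_\mu\sigma^\alpha=0$ in the combined frame-spinor sense together with parallel transport of $\xi^\pm$ along the geodesic---so $\zeta^\pm(t)$ lies in the image of $P^\pm(x^\pm,\xi^\pm)$; the first equation, after projection by $P^\pm$, is by construction equivalent to \eqref{definition of parallel transport of spinor}, giving \eqref{principal symbols dirac propagator geometric formula}. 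The main obstacle, either way, is this last identification: unpacking $P^\pm\bigl(\mathfrak{S}_{-1}^\pm a_1^\pm+\mathfrak{S}_0^\pm a_0^\pm\bigr)=0$ by hand via \eqref{mathfrak Sk with j}, \eqref{weight definition general case} and the short-distance expansion of the Levi-Civita phase function and showing that all non-covariant pieces cancel, leaving exactly the spin covariant derivative along the flow. The invariance of the construction under gauge and coordinate transformations is precisely what guarantees that this cancellation occurs.
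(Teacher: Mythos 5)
Your proposal takes a genuinely different route from the paper, and it is a reasonable plan, but it stops short of the essential calculation.

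The paper does not derive the formula for $\mathfrak{a}_0^\pm$ from the transport equations ab initio. Instead, it quotes the established result of Safarov and Nicoll that
\begin{equation*}
\mathfrak{a}_0^\pm(t;y,\eta)=v^\pm(x^\pm,\xi^\pm)\,[v^\pm(y,\eta)]^*\,e^{-i\int_0^t q^\pm\,d\tau},
\end{equation*}
with $q^\pm$ given by \eqref{q appearing in principal symbols}, and then reduces Theorem~\ref{theorem small time expansion principal symbol} to verifying the single ODE \eqref{proof principal symbols task pt}, namely that $e^{-i\int_0^t q^\pm}\,v^\pm(x^\pm,\xi^\pm)$ is spin-parallel along the cogeodesic. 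That verification is carried out by an explicit computation in a well-chosen coordinate-and-frame gauge: geodesic normal coordinates at $x^+(t)$, momentum aligned with the third axis, framing satisfying $e_j{}^\alpha(0)=\delta_j{}^\alpha$, then Taylor-expanding the framing via the infinitesimal rotation $l^k(x)$ and computing $\{[v^+]^*,W_\mathrm{prin},v^+\}$, $\{v^+,h\}$, $W_\mathrm{sub}$ and the spin-connection term one by one, showing they cancel. This computation is the substance of the proof.

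Your alternative plan --- extracting the ODE directly from \eqref{second transport equation dirac} after projection by $P^\pm$ --- is correct in outline, and your observation that the zeroth transport equation forces $\mathfrak{a}_0^\pm$ into the range of $P^\pm(x^\pm,\xi^\pm)$ is right (your identity $(\varphi_t^\pm+W_\mathrm{prin})|_{x=x^\pm}=\mp 2h\,P^\mp(x^\pm,\xi^\pm)$ checks out). However, you do not actually carry out the step you yourself identify as the ``main obstacle'': computing $P^\pm\bigl(\mathfrak{S}_{-1}^\pm a_1^\pm+\mathfrak{S}_0^\pm a_0^\pm\bigr)$ and showing it equals the spin covariant derivative along $x^\pm$. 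Your closing appeal --- that ``the invariance of the construction under gauge and coordinate transformations is precisely what guarantees that this cancellation occurs'' --- is not a valid substitute. Invariance guarantees only that the transport ODE has a coordinate-free, gauge-covariant form; it does not by itself identify that form as the spin connection (as opposed to, say, the spin connection plus a multiple of the identity proportional to some curvature scalar, which would also be invariant). Pinning down the exact operator requires the explicit term-by-term cancellation that the paper performs in formulae \eqref{proof principal symbol dirac 1}--\eqref{proof principal symbol dirac 10}, and that computation is absent from your proposal. So the proposal as written has a genuine gap.

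One further small caution: your phrase ``annihilates the contribution of $\mathfrak{a}_{-1}^\pm$ hidden in $\mathfrak{S}_{-1}^\pm a_1^\pm$'' misattributes where $\mathfrak{a}_{-1}^\pm$ enters. By \eqref{transport equations formula 4}, $a_1^\pm$ depends only on $\mathfrak{a}_0^\pm$; the $\mathfrak{a}_{-1}^\pm$-dependence sits inside $a_0^\pm$ via \eqref{transport equations formula 5}, and it is $P^\pm\,\mathfrak{S}_0^\pm a_0^\pm$ where the factor $P^\pm P^\mp=0$ removes it. The conclusion you draw is correct, but the bookkeeping should be fixed before the argument is written out in full.
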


\begin{proof}
It is known \cite{safarov}\cite[\textcolor{black}{Subsection~3.4}]{nicoll} that the principal symbols $\mathfrak{a}_0^\pm$ are independent of the choice of the phase function and read
\begin{equation}
\label{principal symbol safarov-nicoll formula}
\mathfrak{a}_0^\pm(t;y,\eta)=v^\pm(x^\pm,\xi^\pm)\,[v^\pm(y,\eta)]^* \,e^{-i\,\int_0^t q^\pm(x^\pm(\tau;y,\eta),\xi^\pm(\tau;y,\eta))\,d \tau},
\end{equation}
where
\begin{equation}
\label{q appearing in principal symbols}
q^\pm=[v^\pm]^* \,W_{\mathrm{sub}} \,v^\pm-\frac{i}{2}\{[v^\pm]^*,W_\mathrm{prin}-h^\pm,v^\pm \}-i\, [v^\pm]^*\{v^\pm,h^\pm\}\,,
\end{equation}
and
\begin{equation}
\label{subprincipal symbol dirac}
W_{\mathrm{sub}}(y):=W_0(y)+\frac{i}2 \sigma^\alpha(y) \,\Gamma^\beta{}_{\alpha\beta}(y)+
\frac{i}{2}[W_\mathrm{prin}(y,\eta)]_{y^\alpha\eta_\alpha}.
\end{equation}
In formula \eqref{q appearing in principal symbols} curly brackets denote the Poisson bracket
\begin{equation*}
\label{poisson brackets}
\{B,C\}:=B_{y^\alpha} C_{\eta_\alpha}- B_{\eta_\alpha} C_{y^\alpha}
\end{equation*}
and the generalised Poisson bracket
\begin{equation*}
\label{generalised poisson brackets}
\{ B,C,D\}:=B_{y^\alpha} C D_{\eta_\alpha}- B_{\eta_\alpha} C D_{y^\alpha}
\end{equation*}
on matrix-functions on the cotangent bundle.
In formula \eqref{subprincipal symbol dirac} the second term on the RHS
is the result of switching to half-densities, see \eqref{P one half}.

Introducing the shorthand $q^\pm(t):=q^\pm(x^\pm(t;y,\eta), \xi^\pm(t;y,\eta))$, the task at hand is to show that 
\[
\zeta^\pm(t;y,\eta)=e^{-i\,\int_0^t q^\pm(\tau)\,d \tau}\,v^\pm(x^\pm,\xi^\pm).
\]
More explicitly, we need to show that
\begin{equation}
\label{proof principal symbols task pt}
e^{i\,\int_0^t q^\pm(\tau)\,d \tau}\left( \frac{d}{dt} +[\dot{x}^\pm]^\alpha\,\frac14 \sigma_\beta \left(\frac{\partial\sigma^\beta}{\partial x^\alpha}+ \Gamma^\beta{}_{\alpha\gamma}\sigma^\gamma \right) \right)\left[ e^{-i\,\int_0^t q^\pm(\tau)\,d \tau}\,v^\pm(x^\pm,\xi^\pm)\right]=0,
\end{equation}
where we premultiplied our expression by $e^{i\,\int_0^t q^\pm(\tau)\,d \tau}$ for the sake of convenience.

We shall prove \eqref{principal symbols dirac propagator geometric formula} for $\mathfrak{a}_0^+$, which corresponds to the upper choice of signs in \eqref{proof principal symbols task pt}. The proof for $\mathfrak{a}_{0}^-$ is analogous.

Let us begin by computing
\begin{equation}
\label{proof principal symbol dirac 1}
\begin{split}
e^{i\,\int_0^t q^+(\tau)} \dfrac{d}{dt}\left( e^{-i\,\int_0^t q^+(\tau)\,d \tau}\,v^+(x^+,\xi^+)\right)
&
=
-i q^+(t) \,v^++ v^+_{x^\alpha}[\dot{x}^+]^\alpha+v^+_{\xi_\alpha} [\dot{\xi}^+]_\alpha\\
&
=
-i q^+(t) \,v^+ + \{v^+,h\}.
\end{split}
\end{equation}
To this end, let us choose geodesic normal coordinates centred at $x^+(t;y,\eta)=0$ and such that $[\xi^+(t;y,\eta)]_\alpha=\delta_{3\alpha}$. Furthermore, up to a global rigid rotation of the framing, we can assume that
\begin{equation*}
\label{proof principal symbol dirac 2}
e_j{}^\alpha(0)=\delta_j{}^\alpha.
\end{equation*}

In our special coordinate system we have
\begin{equation}
\label{proof principal symbol dirac 2bis}
v^+(0,\xi^+)=
\begin{pmatrix}
1\\0
\end{pmatrix},
\qquad
v^-(0,\xi^+)=
\begin{pmatrix}
0\\1
\end{pmatrix},
\end{equation}
and we can expand our framing about $x^+=0$ as
\begin{equation}
\label{proof principal symbol dirac 3}
\begin{pmatrix}
e_1{}^1(x)&e_1{}^2(x)&e_1{}^3(x)\\
e_2{}^1(x)&e_2{}^2(x)&e_2{}^3(x)\\
e_3{}^1(x)&e_3{}^2(x)&e_3{}^3(x)
\end{pmatrix}
=
\begin{pmatrix}
1&l^3(x)&-l^2(x)\\
-l^3(x)&1&l^1(x)\\
l^2(x)&-l^1(x)&1
\end{pmatrix}+O(\|x\|^2) \quad \text{as }x\to 0,
\end{equation}
where $l^k(x)=O(\|x\|)$, $k=1,2,3$.

The fact that $([v^+]^*v^+)(x,\xi)=1$ implies
\begin{equation*}
\label{proof principal symbol dirac 7}
\{[v^+]^*,P^+,v^+\}(0,\xi^+)=
[v^+_{x^\alpha}]^*\,v^+\,[v^+]^*\,v^+_{\xi^\alpha}
-
[v^+_{\xi_\alpha}]^*\,v^+\,[v^+]^*\,v^+_{x^\alpha}=0,
\end{equation*}
which, in turn, yields
\begin{equation}
\label{proof principal symbol dirac 8}
\{[v^+]^*,W_\mathrm{prin},v^+\}=h\,\{[v^+]^*,2P^+-\mathrm{Id},v^+\}=-h\,\{[v^+]^*,v^+\}.
\end{equation}

A standard perturbation argument gives us
\begin{equation}
\label{proof principal symbol dirac 4}
h\,\{[v^+]^*,v^+\}(0,\xi^+)=\left.-\frac i2 \left( \dfrac{\partial l^1}{\partial x^1}+\dfrac{\partial l^2}{\partial x^2}\right)\right|_{x=0}
\end{equation}
and
\begin{equation}
\label{proof principal symbol dirac 5}
\{v^+,h\}(0,\xi^+)=\left.\frac{i}2 
\begin{pmatrix}
0\\
\dfrac{\partial l^1}{\partial x^3}+i\dfrac{\partial l^2}{\partial x^3}
\end{pmatrix}\right|_{x=0}.
\end{equation}

Furthermore, combining \eqref{proof principal symbol dirac 3} with \eqref{Pauli matrices projection} and \eqref{subprincipal symbol dirac}, we get
\begin{equation}
\label{proof principal symbol dirac 6}
W_\mathrm{sub}(0)=-\left. \frac12\left(\dfrac{\partial l^1}{\partial x^1}+\dfrac{\partial l^2}{\partial x^2}+\dfrac{\partial l^3}{\partial x^3} \right)\right|_{x=0}\,\mathrm{Id}.
\end{equation}

Substituting \eqref{proof principal symbol dirac 2bis}, \eqref{proof principal symbol dirac 8} and \eqref{proof principal symbol dirac 4}--\eqref{proof principal symbol dirac 6} into \eqref{q appearing in principal symbols}, and then \eqref{q appearing in principal symbols} and \eqref{proof principal symbol dirac 5} into 
\eqref{proof principal symbol dirac 1}, we conclude that
\begin{equation}
\label{proof principal symbol dirac 9}
e^{i\,\int_0^t q^+(\tau)} \dfrac{d}{dt}\left( e^{-i\,\int_0^t q^+(\tau)\,d \tau}\,v^+(x^+,\xi^+)\right)=\left.\dfrac{i}{2}\begin{pmatrix}
\dfrac{\partial l^3}{\partial x^3}
\\
0
\end{pmatrix}\right|_{x=0}
+\left.\dfrac{i}{2}\begin{pmatrix}
0\\
\dfrac{\partial l^1}{\partial x^3}+i\dfrac{\partial l^2}{\partial x^3}
\end{pmatrix}\right|_{x=0}.
\end{equation}

Similarly, in our special coordinate system we have
\begin{equation}
\label{proof principal symbol dirac 10}
\begin{split}
\left.[\dot{x}^+]^\alpha\,\frac14 \sigma_\beta \left(\frac{\partial\sigma^\beta}{\partial x^\alpha}+ \Gamma^\beta{}_{\alpha\gamma}\sigma^\gamma \right) v^+\right|_{x=0,\ \xi=\xi^+}
&
=
\left.\frac14 \sigma_\beta \left(\frac{\partial\sigma^\beta}{\partial x^3}\right) \begin{pmatrix}
1\\0
\end{pmatrix}
\right|_{x=0}
\\
&
=
\left.-\frac{i}{2}
\begin{pmatrix}
\dfrac{\partial l^3}{\partial x^3}
\\
\dfrac{\partial l^1}{\partial x^3}+i\dfrac{\partial l^2}{\partial x^3}
\end{pmatrix}
\right|_{x=0}.
\end{split}
\end{equation}

Summing up \eqref{proof principal symbol dirac 9} and \eqref{proof principal symbol dirac 10} we arrive at
\eqref{proof principal symbols task pt}.
\end{proof}

\section{Explicit small time expansion of the symbol}
\label{Explicit small time expansion of the symbol}

Even though the presence of gauge degrees of freedom represents an additional challenge in the analysis of the propagator, one can put this freedom to use and exploit it to obtain a small time expansion for the propagator.

Our strategy goes as follows. 
\begin{enumerate}
\item 
Compute the principal and subprincipal symbols of the positive (resp.~negative) propagator for a conveniently chosen framing;

\item 
Using the gauge transformation \eqref{gauge transfomation in terms of O}, \eqref{transformation of the Dirac operator under change of frame}, switch to an arbitrary framing with the same orientation\footnote{Recall that in our paper the orientation is prescribed from the beginning.};

\item
Express the final result in terms of geometric invariants.

\end{enumerate}

\subsection{Special framing}

Let us fix an arbitrary point $y\in M$ and let $V_j\in T_yM$, $j=1,2,3$ be defined by
\begin{equation*}
\label{special framing at zero dirac}
V_j:=e_j(y).
\end{equation*}

\begin{definition}[Levi-Civita framing]
\label{definition Levi-Civita framing}
Let $\mathcal{U}$ be a geodesic neighbourhood of $y$. For $x\in \mathcal{U}$, let $\widetilde e_j^{\mathrm{loc}}(x)$, $j=1,2,3$, be the parallel transport of $V_j$ along the shortest geodesic connecting $y$ to $x$.
We define the \emph{Levi-Civita framing generated by $\{e_j\}_{j=1}^3$ at $y$} to be the equivalence class of framings coinciding with $\{\widetilde e_j^{\mathrm{loc}}\}_{j=1}^3$ in a neighbourhood of $y$.
\end{definition}

With slight abuse of notation, in the following we will identify the Levi-Civita framing with one of its representatives, denoted by $\{\widetilde e_j\}_{j=1}^3$. The choice of a particular representative does not affect our results.

Using the Levi-Civita framing is especially convenient due to the following property.

\begin{lemma}
\label{lemma Levi-Civita framing}
In normal coordinates centred at $y$, the Levi-Civita framing admits the following expansion:
\begin{equation}
\label{expansion Levi-Civita framing Dirac}
\widetilde e_j{}^\alpha(x)=e_j{}^\alpha(y)+\frac16 e_j{}^\beta(y)\,R^\alpha{}_{\mu\beta\nu}(y)\,x^\mu x^\nu +O(\|x\|^3), \qquad j=1,2,3,
\end{equation}
where $R$ is the Riemann curvature tensor\footnote{\color{black}The Riemann curvature tensor $R$ has components ${R^\kappa}_{\lambda\mu\nu}$ defined in accordance with 
\[
{R^\kappa}_{\lambda\mu\nu}:=
dx^\kappa(R(\partial_\mu\,,\partial_\nu)\,\partial_\lambda)
=
\partial_\mu{\Gamma^\kappa}_{\nu\lambda}
-\partial_\nu{\Gamma^\kappa}_{\mu\lambda}
+{\Gamma^\kappa}_{\mu\eta}{\Gamma^\eta}_{\nu\lambda}
-{\Gamma^\kappa}_{\nu\eta}{\Gamma^\eta}_{\mu\lambda}\,.
\]
}.
\end{lemma}

\begin{proof}
In normal geodesic coordinates centred at $y$, the unique geodesic connecting $y$ to $x$ can be written as
\begin{equation}
\label{8 March 2019 formula 2}
\gamma^\alpha(t)=\frac{x^\alpha}{\|x\|_E}\,t,
\end{equation}
where $\|\,\cdot\,\|_E$ is the Euclidean norm, so that $\gamma(\|x\|_E)=x$. Assuming $t$ and $\|x\|_E$ to be small and of the same order, let us perform an expansion in powers of $t$ of $\widetilde e_j$.

The parallel transport equation defining the framing $\{\widetilde e_j\}_{j=1}^3$ reads
\begin{equation}
\label{8 March 2019 formula 3}
\dot{\widetilde{e}}_j{}^\alpha(\gamma(t))=-\dot{\gamma}^\beta(t)\, \Gamma^\alpha{}_{\beta\mu}(\gamma(t))\,\, \widetilde{e}_j{}^\mu(\gamma(t)), \qquad j=1,2,3.
\end{equation}
Since $\widetilde{e}_j(0)=V_j$ and $\Gamma(0)=0$, at linear order in $t$ we have $\dot{\widetilde e}_j(\gamma(t))=O(t),$ which implies
\begin{equation}
\label{8 March 2019 formula 4}
\widetilde e_j(\gamma(t))=V_j+O(t^2).
\end{equation}
Substituting \eqref{8 March 2019 formula 4} into \eqref{8 March 2019 formula 3}, we get
\begin{equation*}
\label{8 March 2019 formula 5}
\dot{\widetilde e}_j{}^\alpha(\gamma(t))=-\frac{x^\beta\,x^\nu}{\|x\|_E^2} \partial_\nu\Gamma^\alpha{}_{\beta\mu}(0)\,V_j{}^\mu\,t+O(t^2),
\end{equation*}
so that
\begin{equation*}
\label{8 March 2019 formula 6}
\widetilde e_j{}^\alpha(\gamma(t))=V_j-\frac{1}{2}\frac{x^\beta x^\nu}{\|x\|_E^2} \partial_\nu\Gamma^\alpha{}_{\beta\mu}(0) \, V_j{}^\mu\,t^2+O(t^3)
\end{equation*}
and
\begin{equation}
\label{8 March 2019 formula 7}
\widetilde e_j{}^\alpha(x)=\widetilde e_j{}^\alpha(\gamma(\|x\|_E))=V_j{}^\alpha-\frac12\partial_\nu\Gamma^\alpha{}_{\beta\mu}(0)\,V_j{}^\mu\, x^\beta x^\nu +O(\|x\|^3), \qquad j=1,2,3.
\end{equation}
Formula \eqref{expansion Levi-Civita framing Dirac} follows at once from \eqref{8 March 2019 formula 7} and the elementary identity
\begin{equation}
\label{relation beteween christoffel and riemann dirac}
\partial_\nu\Gamma^\alpha{}_{\beta\mu}(0)=-\frac13 (R^\alpha{}_{\beta\mu\nu}+R^\alpha{}_{\mu\beta\nu})(0).
\end{equation}
\end{proof}

\begin{corollary}
\label{corollary projection Pauli matrices}
In normal coordinates $x$ centred at $y$, the Pauli matrices
$\widetilde\sigma^\alpha(x)$
projected along the Levi-Civita framing
(see \eqref{Pauli matrices projection}) satisfy
\begin{equation}
\label{Pauli matrices projected along Levi Civita framing dirac}
\widetilde\sigma^\alpha(y)=\sigma^\alpha(y),
\qquad
[\widetilde\sigma^\alpha]_{x^\beta}(y)=0,
\qquad
[\widetilde\sigma^\alpha]_{x^\mu x^\nu}(y)=\frac16
\left[
R^\alpha{}_{\nu\beta\mu}(y)+R^\alpha{}_{\mu\beta\nu}(y)
\right]
\sigma^\beta(y).
\end{equation}
\end{corollary}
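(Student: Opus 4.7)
The plan is to derive the corollary as a straightforward consequence of Lemma~\ref{lemma Levi-Civita framing}, by plugging the expansion \eqref{expansion Levi-Civita framing Dirac} into the definition \eqref{Pauli matrices projection} and reading off the first few Taylor coefficients at $x=0$.

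First I would write $\widetilde\sigma^\alpha(x) = s^j\,\widetilde e_j{}^\alpha(x)$, so that every assertion about $\widetilde\sigma^\alpha$ near $y$ is obtained by multiplying the corresponding assertion about $\widetilde e_j{}^\alpha$ by the constant (coordinate-independent) matrix $s^j$. Since Lemma~\ref{lemma Levi-Civita framing} provides
\[
\widetilde e_j{}^\alpha(x) = e_j{}^\alpha(y) + \tfrac16\, e_j{}^\beta(y)\,R^\alpha{}_{\mu\beta\nu}(y)\,x^\mu x^\nu + O(\|x\|^3)
\]
in the chosen normal coordinates, the first identity $\widetilde\sigma^\alpha(y)=\sigma^\alpha(y)$ is immediate, and the absence of a linear term in the expansion yields $[\widetilde\sigma^\alpha]_{x^\beta}(y)=0$.

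For the second derivatives I would apply $\partial_{x^\mu}\partial_{x^\nu}$ to the quadratic term, using
\[
\partial_{x^\mu}\partial_{x^\nu}(x^\rho x^\sigma) = \delta^\rho{}_\mu\,\delta^\sigma{}_\nu + \delta^\rho{}_\nu\,\delta^\sigma{}_\mu,
\]
which produces the symmetrisation
\[
[\widetilde e_j{}^\alpha]_{x^\mu x^\nu}(y) = \tfrac16\,e_j{}^\beta(y)\left[R^\alpha{}_{\mu\beta\nu}(y) + R^\alpha{}_{\nu\beta\mu}(y)\right].
\]
Multiplying by $s^j$ and using $s^j\,e_j{}^\beta(y) = \sigma^\beta(y)$ gives the claimed formula for $[\widetilde\sigma^\alpha]_{x^\mu x^\nu}(y)$. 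Strictly speaking, one should also observe that the $O(\|x\|^3)$ remainder in Lemma~\ref{lemma Levi-Civita framing} can be differentiated twice with a vanishing contribution at $x=0$; this follows from the smoothness of the parallel transport solution of \eqref{8 March 2019 formula 3}, whose Taylor coefficients at $y$ are determined order-by-order by the Christoffel symbols and their derivatives at $y$.

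There is essentially no obstacle here beyond bookkeeping of indices; the only point worth double-checking is that the two-term symmetrisation $R^\alpha{}_{\mu\beta\nu} + R^\alpha{}_{\nu\beta\mu}$ is exactly what the product rule produces, and that the overall factor is $\tfrac16$ rather than $\tfrac13$ — the $\tfrac12$ from symmetrising the pair $(\mu,\nu)$ has already been absorbed into the $\tfrac16$ coefficient appearing in \eqref{expansion Levi-Civita framing Dirac} via the identity \eqref{relation beteween christoffel and riemann dirac}.
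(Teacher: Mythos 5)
Your proof is correct and takes exactly the approach the paper intends: the paper's proof of this corollary is the single sentence that it ``follows immediately from'' the expansion \eqref{expansion Levi-Civita framing Dirac}, and your argument simply spells out the contraction with $s^j$ and the symmetrisation produced by $\partial_{x^\mu}\partial_{x^\nu}(x^\rho x^\sigma)$. The index bookkeeping checks out and matches \eqref{Pauli matrices projected along Levi Civita framing dirac}.
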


\begin{proof}[Proof of Corollary~\ref{corollary projection Pauli matrices}]
Formula \eqref{Pauli matrices projected along Levi Civita framing dirac} follows immediately from \eqref{expansion Levi-Civita framing Dirac}.
\end{proof}

\begin{corollary}
\label{corollary taylor expansion W0 dirac}
Let $\widetilde{W}$ be the Dirac operator \eqref{massless dirac definition equation} corresponding to the choice of the Levi-Civita framing. Then, in normal coordinates centred at $y$, its zero order part $\widetilde{W}_0$ (see formula~\eqref{zero order part dirac definition}) admits the following expansion:
\begin{equation}
\label{derivative of W_0 at zero dirac}
\widetilde{W}_0(x)=\frac{i}{4} \Ric_{\alpha\beta}(y)\,\widetilde{\sigma}^\beta(y)\, x^\alpha +O(\|x\|^2).
\end{equation}
\textcolor{black}{Here $\Ric$ is the Ricci tensor, $\Ric_{\alpha\beta}:=R^\gamma{}_{\alpha\gamma\beta}$.}
\end{corollary}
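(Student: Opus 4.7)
The plan is to reduce the corollary to a direct Taylor expansion at the origin using the expression \eqref{zero order part dirac definition} for the zero order part of the Dirac operator, substituting the data provided by Corollary~\ref{corollary projection Pauli matrices} and by the identity \eqref{relation beteween christoffel and riemann dirac}. Concretely, in the normal coordinates centred at $y$, one has $\Gamma^\beta{}_{\alpha\gamma}(0)=0$ and, by Corollary~\ref{corollary projection Pauli matrices}, $[\widetilde\sigma^\alpha]_{x^\beta}(y)=0$. Hence each of the two summands inside the round brackets in
\[
\widetilde W_0(x)=-\frac{i}{4}\,\widetilde\sigma^\alpha(x)\,\widetilde\sigma_\beta(x)\left(\frac{\partial\widetilde\sigma^\beta}{\partial x^\alpha}(x)+\Gamma^\beta{}_{\alpha\gamma}(x)\,\widetilde\sigma^\gamma(x)\right)
\]
vanishes at $x=0$, which immediately yields $\widetilde W_0(y)=0$, so the expansion starts at linear order.

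Next I would differentiate the displayed expression once with respect to $x^\mu$ and evaluate at $x=0$. Because the outer factor $\widetilde\sigma^\alpha\widetilde\sigma_\beta$ has vanishing derivative at the origin, and the inner bracket itself vanishes there, the only surviving contribution is the one where $\partial_\mu$ hits the inner bracket. Thus
\[
\partial_\mu\widetilde W_0\bigr|_{x=0}=-\frac{i}{4}\,\sigma^\alpha(y)\,\sigma_\beta(y)\Bigl[\,\partial_\mu\partial_\alpha\widetilde\sigma^\beta\bigr|_0+\partial_\mu\Gamma^\beta{}_{\alpha\gamma}\bigr|_0\,\sigma^\gamma(y)\Bigr].
\]
The first term inside the bracket is supplied by Corollary~\ref{corollary projection Pauli matrices} and the second by \eqref{relation beteween christoffel and riemann dirac}; combining these produces a linear combination of components of the Riemann tensor contracted with $\sigma^\alpha\sigma_\beta\sigma^\gamma$.

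The main (and only real) obstacle is then the algebraic simplification of this contraction. I would use the Pauli identity $\sigma^\alpha\sigma^\gamma=g^{\alpha\gamma}\mathrm{Id}+i\,\varepsilon^{\alpha\gamma}{}_\rho\,\sigma^\rho$ (written in the flat metric at the origin) together with the antisymmetries and the first Bianchi identity of the Riemann tensor. The pieces involving $\varepsilon$ and two Riemann indices collapse by antisymmetry, and the scalar piece produces exactly one contraction of $R^\beta{}_{\alpha\gamma\mu}$, which after using Bianchi reduces to the Ricci tensor. One is left with $\partial_\mu\widetilde W_0|_0=\tfrac{i}{4}\,\Ric_{\mu\beta}(y)\,\sigma^\beta(y)$, whence \eqref{derivative of W_0 at zero dirac} follows by Taylor expansion. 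The remainder is $O(\|x\|^2)$ because everything in sight is smooth near $y$.
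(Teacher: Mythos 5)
Your proposal is correct and follows essentially the same route as the paper, which simply instructs the reader to Taylor-expand the RHS of \eqref{zero order part dirac definition} in normal coordinates, substitute \eqref{relation beteween christoffel and riemann dirac} and \eqref{Pauli matrices projected along Levi Civita framing dirac}, and carry out a "lengthy but straightforward calculation." You supply exactly the right structural observations — the inner bracket vanishes at $y$ (since $\Gamma(y)=0$ and $[\widetilde\sigma^\alpha]_{x^\beta}(y)=0$), so only the derivative hitting the bracket survives — and the right algebraic ingredients (triple Pauli product, Riemann antisymmetries, first Bianchi). One small clarification worth recording: after Bianchi the $\mathrm{Id}$-piece of $\sigma^\alpha\sigma_\beta\sigma^\rho$ contracted with the Riemann tensor vanishes (this is precisely the paper's remark that the linear coefficient is trace-free), and the remaining $s$-proportional part yields \emph{two} Ricci contractions that add to give $\tfrac{i}{4}\Ric_{\alpha\beta}\widetilde\sigma^\beta$, rather than the single contraction your sketch mentions; your overall conclusion is nonetheless correct.
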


\begin{proof}
Formula \eqref{derivative of W_0 at zero dirac} is obtained by expanding the RHS of  \eqref{zero order part dirac definition} in powers of $x$ in normal coordinates centred at $y$, substituting \eqref{relation beteween christoffel and riemann dirac} and \eqref{Pauli matrices projected along Levi Civita framing dirac} in and performing a lengthy but straightforward calculation. It is a somewhat nontrivial fact that the coefficient of the linear term in \eqref{derivative of W_0 at zero dirac} turns out to be trace-free.
\end{proof}

\subsection{Small time expansion of the principal symbols}

The first step towards computing small time expansions for principal and subprincipal symbols of $W$ is to obtain an expression for these objects in a neighbourhood of a given point $y\in M$ for the choice of the Levi-Civita framing generated by our framing $\{e_j\}_{j=1}^3$ at $y$. Observe that, as we are after a small time expansion of the symbols, it is enough to restrict our attention to a small open neighbourhood of $y$.

In the following, we will denote with a tilde objects associated with the Dirac operator $\widetilde{W}$ corresponding to the choice of the Levi-Civita framing.

\begin{theorem}
\label{theorem small time expansion principal symbols dirac}
For the choice of the Levi-Civita framing, the positive and negative principal symbols are independent of $t$ and read
\begin{equation}
\label{small time expansion principal LC dirac}
\widetilde {\mathfrak{a}}^{\pm}_0(t;y,\eta)=\widetilde{P}^{\pm}(y,\eta).
\end{equation}
\end{theorem}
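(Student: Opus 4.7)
The plan is to invoke the explicit expression \eqref{principal symbols dirac propagator geometric formula} for the principal symbol provided by Theorem~\ref{theorem small time expansion principal symbol}. For the Levi-Civita framing $\{\widetilde e_j\}_{j=1}^3$ at $y$, that formula reads
\[
\widetilde{\mathfrak a}^{\pm}_0(t;y,\eta)
=
\widetilde\zeta^{\pm}(t;y,\eta)\,[\widetilde v^{\pm}(y,\eta)]^*,
\]
where $\widetilde\zeta^{\pm}$ is the solution of the spinor parallel transport equation \eqref{definition of parallel transport of spinor} along the curve $x^{\pm}(\,\cdot\,;y,\eta)$ with initial datum $\widetilde v^{\pm}(y,\eta)$. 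Since $\widetilde P^{\pm}(y,\eta) = \widetilde v^{\pm}(y,\eta)\,[\widetilde v^{\pm}(y,\eta)]^*$, the statement \eqref{small time expansion principal LC dirac} reduces to showing that $\widetilde\zeta^{\pm}(t;y,\eta)\equiv\widetilde v^{\pm}(y,\eta)$, i.e.~that the spin parallel transport is trivial in the framing trivialization along $x^{\pm}$.

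The key observation is that the curve $t\mapsto x^{\pm}(t;y,\eta)$ is a geodesic emanating from $y$, because the Hamiltonian flow generated by $\pm h$ is (up to direction reversal) the cogeodesic flow, see~\eqref{relation between two geodesic flows}. By Definition~\ref{definition Levi-Civita framing}, the frame $\widetilde e_j$ is obtained by Levi-Civita parallel transport of $e_j(y)$ along geodesics issuing from $y$; hence $\nabla_{\dot x^{\pm}}\widetilde e_j=0$ along $x^{\pm}$, which in coordinates means
\[
[\dot x^{\pm}]^\alpha\bigl(\partial_\alpha\widetilde e_j{}^\beta + \Gamma^\beta{}_{\alpha\gamma}\,\widetilde e_j{}^\gamma\bigr) = 0,
\qquad j=1,2,3.
\]
Multiplying by the constant Pauli matrix $s^j$ and summing over $j$, and using $\widetilde\sigma^\beta = s^j\,\widetilde e_j{}^\beta$, this at once gives
\[
[\dot x^{\pm}]^\alpha\bigl(\partial_\alpha\widetilde\sigma^\beta + \Gamma^\beta{}_{\alpha\gamma}\widetilde\sigma^\gamma\bigr)
= 0
\quad\text{along }x^{\pm}.
\]
A further left multiplication by $\tfrac14\widetilde\sigma_\beta$ shows that the connection coefficient appearing in \eqref{definition of parallel transport of spinor} vanishes identically along the curve. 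Hence the ODE for $\widetilde\zeta^{\pm}$ collapses to $\tfrac{d}{dt}\widetilde\zeta^{\pm}=0$, and the initial condition yields $\widetilde\zeta^{\pm}(t;y,\eta)=\widetilde v^{\pm}(y,\eta)$ for all $t$ in the relevant range. Substituting back delivers \eqref{small time expansion principal LC dirac}.

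There is no significant technical obstacle. The only point requiring mild care is that the Levi-Civita framing is intrinsically defined only on a geodesic neighbourhood of $y$ (cf.~Definition~\ref{definition Levi-Civita framing}), so the argument operates in the regime where $x^{\pm}(t;y,\eta)$ stays inside that neighbourhood --- which is precisely the small-time regime used in the rest of this section. The choice of a particular smooth global representative of the equivalence class has no effect on $\widetilde{\mathfrak a}^{\pm}_0$ in this regime, by a straightforward non-stationary phase argument combined with the gauge covariance \eqref{gauge transformation at the level of the symbol}.
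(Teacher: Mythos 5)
Your proposal is correct and follows essentially the same route as the paper's proof: both reduce the claim, via Theorem~\ref{theorem small time expansion principal symbol}, to showing $\widetilde\zeta^\pm(t;y,\eta)=\widetilde v^\pm(y,\eta)$, and both establish this by observing that the Levi-Civita framing is parallel along geodesics through $y$, so that $[\dot x^\pm]^\alpha\bigl(\partial_\alpha\widetilde\sigma^\beta+\Gamma^\beta{}_{\alpha\gamma}\widetilde\sigma^\gamma\bigr)=0$ along $x^\pm$ and the spin transport equation degenerates to $\frac{d}{dt}\widetilde\zeta^\pm=0$. Your extra remark on the small-time, geodesic-neighbourhood regime and on independence of the chosen representative of the Levi-Civita framing is a harmless clarification that the paper leaves implicit.
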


\begin{proof}
In accordance with Theorem~\ref{theorem small time expansion principal symbol}, the principal symbols are determined by the eigenvectors of $\widetilde{W}_\mathrm{prin}$ and their parallel transport with respect with the spin connection along the Hamiltonian trajectories.
Hence, it suffices to show that 
\begin{equation}
\label{proof small time expansion principal LC dirac 1}
\widetilde{\zeta}^\pm(t;y,\eta)=\widetilde{v}^\pm(y,\eta).
\end{equation}
Once this is achieved, \eqref{small time expansion principal LC dirac} follows from the fact that $\widetilde{W}_\mathrm{prin}(y,\eta)=W_\mathrm{prin}(y,\eta)$.

In normal coordinates centred at $y$ the parallel transport equation \eqref{definition of parallel transport of spinor} reads
\begin{equation}
\label{new proof principal symbols 1}
\left[ \frac{d}{dt} +[\dot{x}^\pm]^\alpha\,\frac14 \widetilde{\sigma}_\beta(x^\pm)\left( \frac{\partial\widetilde{\sigma}^\beta}{\partial x^\alpha}(x^\pm)+\Gamma^\beta{}_{\alpha\gamma}(x^\pm)\widetilde{\sigma}^\gamma(x^\pm)\right)
\right]
\widetilde{\zeta}^\pm=0, \qquad  \widetilde{\zeta}^\pm|_{t=0}=\widetilde{v}^\pm.
\end{equation}
We claim that 
\begin{equation}
\label{new proof principal symbols 2}
[\dot{x}^\pm]^\alpha\left( \frac{\partial\widetilde{\sigma}^\beta}{\partial x^\alpha}(x^\pm)+\Gamma^\beta{}_{\alpha\gamma}(x^\pm)\widetilde{\sigma}^\gamma(x^\pm)\right)=0.
\end{equation}
In fact, we have
\[
[\dot{x}^\pm]^\alpha\left( \frac{\partial\widetilde{\sigma}^\beta}{\partial x^\alpha}(x^\pm)+\Gamma^\beta{}_{\alpha\gamma}(x^\pm)\widetilde{\sigma}^\gamma(x^\pm)\right)=
[\dot{x}^\pm]^\alpha (\partial_{x^\alpha} \widetilde{e}_j{}^\beta+\Gamma^\beta{}_{\alpha\gamma}\, \widetilde{e}_j{}^\gamma)(x^\pm) \,s^j
\]
and
\[
[\dot{x}^\pm]^\alpha (\partial_{x^\alpha} \widetilde{e}_j{}^\beta+\Gamma^\beta{}_{\alpha\gamma}\, \widetilde{e}_j{}^\gamma)(x^\pm)=0 \quad \text{for}\quad j=1,2,3
\]
in view of Definition~\ref{definition Levi-Civita framing} and the properties of the Hamiltonian flows $x^\pm$, i.e.\ that $x^+(\,\cdot\,;y,\eta)$ is geodesic and relation \eqref{relation between two geodesic flows}.
By substituting \eqref{new proof principal symbols 2} into \eqref{new proof principal symbols 1} we arrive at \eqref{proof small time expansion principal LC dirac 1}.
\end{proof}

\subsection{Small time expansion of the subprincipal symbols}

Let us now turn our attention to the subprincipal symbols $\widetilde{\mathfrak{a}}_{-1}^\pm$. 

Unlike the principal symbols, the subprincipal symbols depend on the choice of phase functions. As here we are only interested in small time expansions and the injectivity radius $\operatorname{Inj}(M,g)$ is strictly positive, we can work, without loss of generality, in a neighbourhood of $y$ with no conjugate points to $y$. The absence of conjugate points allows us to construct positive and negative propagators for small times by means of the algorithm described in subsection~\ref{The algorithm} for the choice of \emph{real-valued} Levi-Civita phase functions
\begin{equation*}
\label{real valued LC phase functions dirac}
\varphi^\pm(t,x;y,\eta)=\int_{\gamma^\pm} \zeta^\pm \,dz,
\end{equation*}
cf.~Definition~\ref{definition LC phase functions with epsilon dirac} for $\epsilon=0$.

In the remainder of this subsection we adopt the same coordinates for $x$ and $y$ and we choose normal geodesic coordinates centred at $y$. We remind the reader that, in such coordinates, 
\begin{equation}
\label{x plus minus normal coordinates dirac}
[x^\pm]^\alpha(t;0,\eta)=\pm\frac{\eta^\alpha}{h}\,t.
\end{equation}

According to \cite[Eqns.~(8.7) and (8.12)]{wave} and \eqref{relation between the two Levi-Civita phase functions}, we have
\begin{equation}
\label{real valued LC phase functions dirac expansion}
\varphi^\pm(t,x;0,\eta)=x^\alpha\eta_\alpha \mp h\,t\pm \frac{\textcolor{black}{t}}{3h} R^\alpha{}_\mu{}^\beta{}_\nu(0) \eta_\alpha\eta_\beta\,x^\mu x^\nu +O(\|x\|^4+t^4)
\end{equation}
and
\begin{equation}
\label{weight real valued LC phase functions dirac expansion}
w^\pm(t,x;0,\eta)=1+\frac1{12}\operatorname{Ric}_{\mu\nu}(0)\,x^\mu\,x^\nu\mp\frac{t}{3h}\operatorname{Ric}^\mu{}_\nu(0)\,\eta_\mu\,x^\nu +O(\|x\|^3+|t|^3).
\end{equation}
Recall that the weight $w$ is defined by \eqref{weight definition general case}. 

As explained in subsection~\ref{Transport equations dirac}, the subprincipal symbols are determined by the first and the second transport equations, \eqref{second transport equation dirac} and \eqref{third transport equation dirac}. More precisely, if we are interested in expansions with remainder $O(t^2)$, we need to determine \eqref{third transport equation dirac} up to zeroth order in $t$ and \eqref{second transport equation dirac} up to first order in $t$. 

To this end, we begin by observing that formulae \eqref{real valued LC phase functions dirac expansion} and \eqref{weight real valued LC phase functions dirac expansion}, see also \eqref{operators mathfrak S with j}, imply that the differential evaluation operators $\mathfrak{S}^\pm_{-2}$ and $\mathfrak{S}^\pm_{-1}$ admit the following expansions in normal coordinates centred at $y$.

\begin{lemma}
\label{lemma expansion fractur S minus 1 and S minus 2}
We have
\begin{enumerate}[(a)]
\item 
\begin{equation}
\label{expansion S_-2 order zero dirac}
\mathfrak{S}^\pm_{-2}=\left.\frac12 \left[i \dfrac{\partial^2}{\partial x^\alpha\partial \eta_\alpha}  \right]^2 \left( \,\cdot\,\right)\right|_{t=0, \,x=0}+O(t),
\end{equation}

\item
\begin{equation}
\label{expansion S_-1 order one dirac}
\mathfrak{S}^\pm_{-1}=i\,\mathfrak{S}_0^\pm\left(\dfrac{\partial^2}{\partial x^\alpha\partial \eta_\alpha}\pm\frac{t}{2}\, h_{\eta_\alpha\eta_\beta} \dfrac{\partial^2}{\partial x^\alpha\partial x^\beta} \right) +O(t^2).
\end{equation}
\end{enumerate}
\end{lemma}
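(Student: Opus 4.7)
The plan is to prove both expansions by direct substitution of the phase and weight expansions \eqref{real valued LC phase functions dirac expansion}, \eqref{weight real valued LC phase functions dirac expansion} into the definition \eqref{mathfrak Sk with j}, keeping careful track of orders in $t$ in normal coordinates centred at $y$. Differentiating \eqref{real valued LC phase functions dirac expansion} furnishes the building blocks
\begin{equation*}
\varphi^\pm_{\eta_\alpha}(t,x;0,\eta)=x^\alpha\mp h_{\eta_\alpha}\,t+O(\|x\|^3+|t|^3),\qquad
\varphi^\pm_{x^\alpha\eta_\beta}(t,x;0,\eta)=\delta_\alpha{}^\beta+O(\|x\|^2+t^2),
\end{equation*}
whence $L^\pm_\alpha=\partial_{x^\alpha}+O(\|x\|^2+t^2)$; together with \eqref{weight real valued LC phase functions dirac expansion} and \eqref{x plus minus normal coordinates dirac}, these control every ingredient of $\mathfrak{S}^\pm_{-k}$ to the required accuracy.

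For part (a), I would evaluate at $t=0$, where $\mathfrak{S}_0^\pm$ reduces to evaluation at $x=0$ and all correction coefficients in the bracket of \eqref{mathfrak Sk with j} collapse to their leading-order form. The bracket then becomes the first-order differential operator
\begin{equation*}
i\,\partial_{\eta_\beta}\!\left(1-\tfrac12 x^\alpha\partial_{x^\alpha}+\tfrac16 x^\alpha x^\gamma\partial^2_{x^\alpha x^\gamma}-\tfrac{1}{24}x^\alpha x^\gamma x^\delta\partial^3_{x^\alpha x^\gamma x^\delta}\right)\partial_{x^\beta},
\end{equation*}
with an $O(t)$ error. Squaring this operator, composing with $(\,\cdot\,)|_{x=0}$ and simplifying produces exactly $\tfrac12[i\,\partial^2_{x^\alpha\eta_\alpha}]^2$: the factor $\tfrac12$ is no accident, as the weights $\tfrac{1}{\boldsymbol{\alpha}!(|\boldsymbol{\alpha}|+1)}$ in \eqref{mathfrak Sk with j} are engineered precisely to generate the correct $\tfrac{1}{k!}$ upon iteration.

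For part (b), I would track the three sources of $t$-dependence in $\mathfrak{S}^\pm_{-1}$: (i) the shift of the evaluation point to $x^\pm(t;0,\eta)=\pm t\,\eta^\alpha/h$ in $\mathfrak{S}_0^\pm$; (ii) the $t$-linear part $\mp h_{\eta_\alpha}t$ of $\varphi^\pm_{\eta_\alpha}$, which through the factor $-\tfrac12\varphi^\pm_{\eta_\alpha}L^\pm_\alpha$ in the bracket produces, after applying $\partial_{\eta_\beta}$ and using $h_{\eta_\alpha\eta_\beta}=\partial_{\eta_\beta}(\eta^\alpha/h)$, a contribution proportional to $\pm\tfrac{t}{2}\bigl(h_{\eta_\alpha\eta_\beta}\partial^2_{x^\alpha x^\beta}+h_{\eta_\alpha}\partial_{\eta_\beta}\partial^2_{x^\alpha x^\beta}\bigr)$; and (iii) the $t$-linear corrections to $w^\pm$ and $L^\pm_\beta$, which vanish at $x=0$ to the required order. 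The piece $h_{\eta_\alpha}\partial_{\eta_\beta}\partial^2_{x^\alpha x^\beta}$ from (ii) cancels exactly against a matching piece from (i) coming from combining the shift $x^\alpha=\pm t\eta^\alpha/h$ with the correction $-\tfrac12 x^\alpha\partial_{x^\alpha}$ inside the bracket, leaving only $\pm\tfrac{t}{2}h_{\eta_\alpha\eta_\beta}\partial^2_{x^\alpha x^\beta}$ as the net $t$-linear correction, absorbed inside $\mathfrak{S}_0^\pm$; this is precisely \eqref{expansion S_-1 order one dirac}.

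The main obstacle will be the bookkeeping: ensuring that every $t$-linear correction coming from the phase function, weight, inverse Jacobian and Hamiltonian flow is accounted for, and verifying the cancellation between the shift of $\mathfrak{S}_0^\pm$ and the $\varphi_\eta$-correction that leaves the clean $h_{\eta_\alpha\eta_\beta}$-Hessian structure. The calculation is conceptually elementary but arithmetically delicate, and the identity $h_{\eta_\alpha}=\eta^\alpha/h$ plays the crucial role in making the cancellation manifest.
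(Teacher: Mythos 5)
Your approach is correct and, in substance, the same as the paper's: substitute the expansions of the phase, weight and $L$-operator into the definition \eqref{mathfrak Sk with j} in normal coordinates and track orders in $t$, using $x^\pm(t;0,\eta)=\pm t\,\eta^\alpha/h$. For part (a) you carry out explicitly what the paper declares an immediate consequence, and your observation that the weights $\tfrac{1}{\boldsymbol{\alpha}!(|\boldsymbol{\alpha}|+1)}$ are engineered to produce the $\tfrac{1}{k!}$ upon iteration is accurate. For part (b) the one organizational difference is that you reconstruct by hand the cancellation between the $\mp h_{\eta_\alpha}t$ piece of $\varphi^\pm_{\eta_\alpha}$ and the shift of the evaluation point, whereas the paper exploits the exact identity $\left.\varphi^\pm_\eta\right|_{x=x^\pm}=0$ (a defining property of phase functions parametrizing the Lagrangian manifold of the flow): this kills in one stroke every term in which $\partial_{\eta_\beta}$ does not land on $\varphi^\pm_{\eta_\alpha}$, leaving $-\tfrac{i}{2}\,\varphi^\pm_{\eta_\alpha\eta_\beta}L^\pm_\alpha L^\pm_\beta$ at $x=x^\pm$, from which the Hessian term follows directly via \eqref{varphi pm second derivative in eta dirac}. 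The two routes are equivalent, but invoking the identity is shorter and avoids the appearance of two pieces that must be matched.

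One point worth making explicit: your building block $\varphi^\pm_{x^\alpha\eta_\beta}=\delta_\alpha{}^\beta+O(\|x\|^2+t^2)$ is stronger than what naive differentiation of \eqref{real valued LC phase functions dirac expansion} gives (the quadratic Riemann term a priori contributes $O(\|x\|)$), and stronger than the paper's own \eqref{proof taylor expansion fraktur sigmas dirac 1}, which asserts only $L^\pm_\alpha=\partial_{x^\alpha}+O(\|x\|+|t|)$. The sharper estimate is precisely what part (b) requires: with only $O(\|x\|)$, the term $i\,\partial_{\eta_\beta}L_\beta$ would acquire an uncontrolled $O(t)$ contribution after evaluation at $x^\pm=O(t)$. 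It is in fact true, because the real-valued Levi--Civita phase function satisfies $\varphi^\pm(0,x;0,\eta)=x^\alpha\eta_\alpha$ exactly in normal coordinates (parallel transport preserves the inner product, so the integrand of the defining line integral is constant along the geodesic), which forces the curvature term in \eqref{real valued LC phase functions dirac expansion} to carry an overall factor of $t$. You should justify this step rather than read the building block off \eqref{real valued LC phase functions dirac expansion} at face value; the paper glides over the same subtlety.
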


\begin{proof}
(a) It is an immediate consequence of \eqref{x plus minus normal coordinates dirac}, \eqref{weight real valued LC phase functions dirac expansion}
and
\begin{equation}
\label{proof taylor expansion fraktur sigmas dirac 1}
L^\pm_\alpha=\dfrac{\partial}{\partial x^\alpha}+O(\|x\|+|t|).
\end{equation}

(b) Substituting \eqref{weight real valued LC phase functions dirac expansion} into \eqref{operators mathfrak S with j} with $k=1$ and recalling that $\left.\varphi^\pm_{\eta}\right|_{x=x^\pm}=0$, we get
\begin{equation}
\label{2 March 2019 forumla 3}
\mathfrak{S}^{\pm}_{-1}=\mathfrak{S}_0^{\pm}\left[ i \dfrac{\partial^2}{\partial x^\alpha\partial \eta_\alpha}-\frac{i}{2} \varphi^{\pm}_{\eta_\alpha\eta_\beta}L^{\pm}_\alpha L^{\pm}_\beta \right]+O(t^2).
\end{equation}
Formula \eqref{proof taylor expansion fraktur sigmas dirac 1} and the fact that
\begin{equation}
\label{varphi pm second derivative in eta dirac}
\left.\varphi^\pm_{\eta_\alpha\eta_\beta}\right|_{x=x^\pm}=\mp t\, h_{\eta_\alpha\eta_\beta}+O(t^3)
\end{equation}
yield \eqref{expansion S_-1 order one dirac}.
\end{proof}

In order to be able to compute the subprincipal symbols, we need to determine the initial condition $\widetilde{\mathfrak{a}}^\pm_{-1}|_{t=0}$ first.

\begin{lemma}
\label{lemma initial condition subprincipal symbol dirac}
For the choice of real-valued Levi-Civita phase functions, the positive and negative subprincipal symbols $\widetilde{\mathfrak{a}}^\pm_{-1}$ vanish at $t=0$:
\begin{equation}
\label{initial condition subprincipal symbol dirac}
\widetilde{\mathfrak{a}}^\pm_{-1}(0;y,\eta)=0.
\end{equation}
\end{lemma}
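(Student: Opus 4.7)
The plan is to reduce the initial value of the subprincipal symbol of the propagator to a computation about the $g$-subprincipal symbol of the pseudodifferential operator $\widetilde U^\pm(0)$, which has already been analysed in Theorem~\ref{theorem about subprincipal symbols of propagators at zero}.

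First, I would observe that at $t=0$ the real-valued Levi-Civita phase function $\varphi^\pm(0,x;y,\eta)$ coincides with the time-independent Levi-Civita phase function $\phi$ from Section~\ref{Invariant description} taken with $\epsilon=0$. Consequently, by the construction in subsection~\ref{The algorithm}, the initial condition for the transport equations reads $\widetilde{\mathfrak{a}}^\pm_{-1}(0;y,\eta)=\widetilde{\mathfrak{s}}^\pm_{-1}(y,\eta)$, where $\widetilde{\mathfrak{s}}^\pm_{-1}$ is the $g$-subprincipal symbol of $\widetilde U^\pm(0)$ in the invariant representation \eqref{pseudo P invariant}. Applying Theorem~\ref{theorem subprincipal symbols for P dirac} with $\epsilon=0$ then yields
\[
\widetilde{\mathfrak{s}}^\pm_{-1}(y,\eta) = [\widetilde U^\pm(0)]_{\mathrm{sub}}(y,\eta) + \frac{i}{2}(\widetilde P^\pm)_{y^\alpha\eta_\alpha}(y,\eta) + \frac{i}{2}\,\Gamma^\alpha{}_{\beta\gamma}(y)\bigl[\eta_\alpha (\widetilde P^\pm)_{\eta_\beta}\bigr]_{\eta_\gamma}(y,\eta),
\]
so it is enough to show that each of the three contributions vanishes at the base point $y$.

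Next, I would fix an arbitrary $y\in M$ and work in normal geodesic coordinates centred at $y$ with $e_j{}^\alpha(y)=\delta_j{}^\alpha$. The identity $\Gamma^\alpha{}_{\beta\gamma}(y)=0$ immediately kills the third summand. For the second summand, Corollary~\ref{corollary projection Pauli matrices} gives $[\widetilde\sigma^\alpha]_{x^\beta}(y)=0$; combined with $\partial_\gamma g^{\alpha\beta}(y)=0$ this implies $[\widetilde W_{\mathrm{prin}}]_{y^\beta}(y,\eta)=0$ and $h_{y^\beta}(y,\eta)=0$. Differentiating the explicit expression $\widetilde P^\pm=\tfrac12\bigl(\operatorname{Id}\pm \widetilde W_{\mathrm{prin}}/h\bigr)$ first in $\eta_\alpha$ and then in $y^\alpha$, every resulting summand evaluated at $y$ carries a $y$-derivative factor that vanishes, whence $(\widetilde P^\pm)_{y^\alpha\eta_\alpha}(y,\eta)=0$.

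For the first summand I would apply Theorem~\ref{theorem about subprincipal symbols of propagators at zero}, which expresses $[\widetilde U^\pm(0)]_{\mathrm{sub}}(y,\eta)$ in terms of the Hodge dual of the Weitzenb\"ock torsion $\widetilde T(y)$ of the Levi-Civita framing $\{\widetilde e_j\}$. Writing the Weitzenb\"ock Christoffel symbols in the frame form $\Gamma^{W,\alpha}{}_{\beta\gamma}=\widetilde e_j{}^\alpha\,\partial_\beta\widetilde e^j{}_\gamma$ and invoking Lemma~\ref{lemma Levi-Civita framing}, the first-order Taylor coefficients of $\widetilde e_j{}^\alpha$ at the origin vanish, so the Weitzenb\"ock Christoffels, and hence $\widetilde T(y)$, vanish at the base point. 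This forces $[\widetilde U^\pm(0)]_{\mathrm{sub}}(y,\eta)=0$ and, combined with the two previous vanishings, establishes \eqref{initial condition subprincipal symbol dirac} at the base point $y$; the arbitrariness of $y$ closes the argument.

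The step I expect to require the most care is the invocation of Theorem~\ref{theorem subprincipal symbols for P dirac} with $\epsilon=0$: that result was stated for $\epsilon>0$, so one must briefly verify that its proof carries through when the Gaussian damping is switched off. In practice $\epsilon$ enters the relevant Taylor expansion of $\phi$ only through an additive imaginary term, so setting $\epsilon=0$ simply amounts to dropping the last summand on the right-hand side of \eqref{relation between invariant subps}.
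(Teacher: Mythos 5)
Your proposal is correct and takes essentially the same route as the paper's proof: both reduce the claim to Theorem~\ref{theorem subprincipal symbols for P dirac} with $\epsilon=0$, then work in normal coordinates (so the Christoffel term drops), use Lemma~\ref{lemma Levi-Civita framing} to see that the Weitzenb\"ock torsion of the Levi-Civita framing vanishes at the base point and hence $[\widetilde U^\pm(0)]_\mathrm{sub}$ vanishes there by Theorem~\ref{theorem about subprincipal symbols of propagators at zero}, and finally kill the $(\widetilde P^\pm)_{y^\alpha\eta_\alpha}$ contribution. The only cosmetic difference is that the paper obtains this last vanishing from $(v^\pm)_{x^\alpha}(0,\eta)=0$ via a perturbation argument, whereas you differentiate the explicit formula $\widetilde P^\pm=\tfrac12\bigl(\operatorname{Id}\pm \widetilde W_{\mathrm{prin}}/h\bigr)$ directly; both are fine.
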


\begin{proof}
The subprincipal symbols are scalar functions, so it enough to establish \eqref{initial condition subprincipal symbol dirac} in one specific coordinate system. Let us choose normal coordinates centred at $y=0$ such that $\widetilde{e}_j{}^\alpha(0)=\delta_j{}^\alpha$. We observe that the torsion of the Weitzenb\"ock connection generated by the Levi-Civita framing at $y$ vanishes at $y$, as a consequence of the fact that the first derivatives of the framing are zero, cf.~\eqref{expansion Levi-Civita framing Dirac} and \eqref{weitzenbock connection coefficients definition}--\eqref{torsion definition}. Therefore, Theorem~\ref{theorem about subprincipal symbols of propagators at zero} tells us that
\begin{equation}
\label{4 December 2019 equation 6 bis}
[U^\pm(0)]_\mathrm{sub}(0,\eta)=0.
\end{equation}
A straightforward perturbation argument shows that 
\begin{equation}
\label{proof lemma initial conditions subps dirac 1}
(v^\pm)_{x^\alpha}(0,\eta)=0.
\end{equation}
Substituting \eqref{4 December 2019 equation 6 bis} and \eqref{proof lemma initial conditions subps dirac 1} into \eqref{relation between invariant subps} with $P=U^\pm(0)$ and $\epsilon=0$ and using the fact that Christoffel symbols vanish at $y$, we arrive at \eqref{initial condition subprincipal symbol dirac}.
\end{proof}

We are now in a position to examine the first transport equation.

\begin{lemma}
\label{lemma projection along other eigenspace Dirac}
The projection onto the negative (resp.~positive) eigenspace of $\widetilde{W}_\mathrm{prim}$ of the subprincipal symbol of the positive (resp.~negative) propagator is given by
\begin{multline}
\label{projection on other eigenspace subps dirac}
\widetilde{P}^\mp(x^\pm,\xi^\pm)\widetilde{\mathfrak{a}}_{-1}^\pm(t;y,\eta)=\pm it\,\widetilde{P}^\mp(y,\eta)\left[\frac1{8h^3}\,\Ric_{\alpha\beta}(y)\,\eta^\alpha\eta^\beta-\frac1{4h}\Ric_{\alpha\beta}(y)\,\eta^\alpha \widetilde{P}^\pm_{\eta_\beta}(y,\eta) \right]
\\+O(t^2).
\end{multline}
\end{lemma}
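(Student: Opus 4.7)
\medskip

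\textbf{Proof proposal.}
The plan is to work in the Levi-Civita framing $\{\widetilde e_j\}$ generated by $\{e_j\}$ at $y$, so that by Theorem~\ref{theorem small time expansion principal symbols dirac} the principal symbol is $t$-independent, $\widetilde{\mathfrak{a}}^\pm_0(t;y,\eta)=\widetilde P^\pm(y,\eta)$, and by Lemma~\ref{lemma initial condition subprincipal symbol dirac} the subprincipal symbol vanishes at $t=0$, $\widetilde{\mathfrak{a}}^\pm_{-1}(0;y,\eta)=0$. I will choose normal coordinates centred at $y$ (so $y=0$) and use that in these coordinates $x^\pm(t;0,\eta)=\pm t\eta/h+O(t^3)$ and $\xi^\pm=\pm\eta+O(t^2)$. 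As explained in subsection~\ref{Transport equations dirac}, the projection $\widetilde P^\mp(x^\pm,\xi^\pm)\,\widetilde{\mathfrak{a}}^\pm_{-1}$ onto the opposite eigenspace is determined \emph{algebraically} by multiplying the first transport equation \eqref{second transport equation dirac} on the left by $\widetilde P^\mp(x^\pm,\xi^\pm)$.

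Concentrating on the $+$ case (the $-$ case is identical after applying \eqref{relation between two geodesic flows}), I rewrite \eqref{second transport equation dirac} and use the eikonal equation $\varphi^+_t|_{x=x^+}=-h$ together with the identity $\widetilde P^-(W_\mathrm{prin}-h)=-2h\widetilde P^-$ on the image of $\widetilde{\mathfrak{a}}^+_{-1}$. Because $\widetilde{\mathfrak{a}}^+_0=\widetilde P^+$ is independent of both $t$ and $x$, the transport equation collapses to
\begin{equation*}
\widetilde P^-(x^+,\xi^+)\,\widetilde{\mathfrak{a}}^+_{-1}(t;y,\eta)
=\frac1{2h}\,\widetilde P^-(x^+,\xi^+)\Bigl\{\mathfrak S^+_{-1}a^+_1+\bigl[-i(w^+)^{-1}(w^+_t+\widetilde\sigma^\alpha w^+_{x^\alpha})+\widetilde W_0\bigr]_{x=x^+}\widetilde P^+\Bigr\}.
\end{equation*}
Since $\widetilde{\mathfrak{a}}^+_{-1}(0)=0$, the right-hand side is $O(t)$, and I only need its linear term in $t$.

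Next I expand each ingredient to first order in $t$: the phase function via \eqref{real valued LC phase functions dirac expansion} (yielding $a^+_1$ linear in $x$ to leading order because $(\sigma^\alpha\eta_\alpha-h)\widetilde P^+=0$), the weight via \eqref{weight real valued LC phase functions dirac expansion}, the projected Pauli matrices via Corollary~\ref{corollary projection Pauli matrices}, and $\widetilde W_0$ via Corollary~\ref{corollary taylor expansion W0 dirac}. Lemma~\ref{lemma expansion fractur S minus 1 and S minus 2}(b) then reduces $\mathfrak S^+_{-1}a^+_1$ at $O(t)$ to curvature contractions with $\eta$ and $\widetilde P^+$. The key algebraic identity I will use to reassemble the result geometrically is obtained by differentiating $W_\mathrm{prin}=h(\widetilde P^+-\widetilde P^-)$ in $\eta$,
\begin{equation*}
\sigma^\alpha=h_{\eta_\alpha}(\widetilde P^+-\widetilde P^-)+2h\,\widetilde P^+_{\eta_\alpha},
\end{equation*}
together with the projection identity $\widetilde P^-\widetilde P^+_{\eta_\alpha}\widetilde P^+=\widetilde P^-\widetilde P^+_{\eta_\alpha}$. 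These convert all off-diagonal matrix products of the form $\widetilde P^-\sigma^\alpha\widetilde P^+$ into scalar multiples of $\widetilde P^-\widetilde P^+_{\eta_\alpha}$, which is precisely the structure appearing on the right-hand side of \eqref{projection on other eigenspace subps dirac}.

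The main obstacle is purely computational: tracking a handful of curvature contractions (Ricci tensor times $\eta$ or $\widetilde P^+_\eta$) coming from three distinct sources---the weight $w^+$, the zero-order part $\widetilde W_0$, and the quadratic correction to $\varphi^+$ that enters through $\mathfrak S^+_{-1}a^+_1$---and checking that, once projected via $\widetilde P^-$, their numerical coefficients combine to the $1/(8h^3)$ and $-1/(4h)$ in \eqref{projection on other eigenspace subps dirac}. The fact that the contribution involving $\operatorname{Ric}_{\alpha\beta}\eta^\alpha\eta^\beta\,\widetilde P^+$ must survive the left-multiplication by $\widetilde P^-(x^+,\xi^+)$ only through the $O(t)$ commutator $[\widetilde P^-(x^+,\xi^+),\widetilde P^+(y,\eta)]$ is what produces the unfamiliar factor $1/(8h^3)$; all other contributions that are naively proportional to $\widetilde P^-\widetilde P^+=0$ cancel at leading order, so keeping this single commutator contribution is the crucial delicate point.
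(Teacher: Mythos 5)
Your overall strategy agrees with the paper's: project the first transport equation \eqref{second transport equation dirac} onto the opposite eigenspace to get an algebraic formula for $\widetilde P^\mp(x^\pm,\xi^\pm)\widetilde{\mathfrak a}^\pm_{-1}$, Taylor-expand the phase function, weight, $\widetilde W_0$ and the projected Pauli matrices in normal coordinates, and use the identity $\widetilde P^\mp\widetilde\sigma^\alpha\widetilde P^\pm=\pm 2h\,\widetilde P^\mp\widetilde P^\pm_{\eta_\alpha}$ to turn every surviving matrix product into a scalar multiple of $\widetilde P^\mp\widetilde P^\pm_{\eta_\alpha}$. The algebraic reduction $\widetilde P^-(W_\mathrm{prin}-h)=-2h\widetilde P^-$ together with the eikonal equation and the vanishing initial condition $\widetilde{\mathfrak a}^\pm_{-1}(0)=0$ is exactly what the paper does.

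There is, however, a genuine error in your explanation of where the $\frac1{8h^3}\Ric_{\alpha\beta}\eta^\alpha\eta^\beta$ term comes from, and this error would produce the wrong answer if you followed it. You claim this term survives only through the $O(t)$ commutator $[\widetilde P^-(x^+,\xi^+),\widetilde P^+(y,\eta)]$ and that everything else that looks proportional to $\widetilde P^-\widetilde P^+$ cancels. But in your own reduced formula the right-hand side is $\frac1{2h}\widetilde P^-(x^+,\xi^+)\{\cdots\}$, and the brace $\{\cdots\}$ is already $O(t)$: $\widetilde W_0(y)=0$ (Corollary~\ref{corollary taylor expansion W0 dirac}), $w^\pm_t$ and $w^\pm_{x^\alpha}$ vanish at $t=0,\,x=0$ by \eqref{weight real valued LC phase functions dirac expansion}, and $\mathfrak S^+_{-1}a^+_1$ is $O(t)$ (Lemma~\ref{lemma expansion fractur S minus 1 and S minus 2}(b) and the $O(t)$ expansions entering it). Consequently the $O(t)$ difference between $\widetilde P^-(x^+,\xi^+)$ and $\widetilde P^-(y,\eta)$ contributes only at $O(t^2)$ and is absorbed in the remainder; the commutator you single out plays no role at the order you are after. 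The $\Ric_{\alpha\beta}\eta^\alpha\eta^\beta$ contribution actually arises because the $\eta$-derivatives $[\,\eta_\mu\eta_\nu\,\widetilde\sigma^\beta\,\widetilde P^\pm\,]_{\eta_\alpha}$ and $[(\widetilde W_\mathrm{prin})_{x^\alpha x^\beta}\widetilde P^\pm]_{\eta_\alpha}$ inside $\mathfrak S^+_{-1}a^+_1$ generate terms proportional to the identity matrix $\mathrm{Id}$ (via $\widetilde P^\pm_{\eta_\alpha}$ and \eqref{projections in terms of principal symbol dirac}), and $\widetilde P^\mp\cdot\mathrm{Id}=\widetilde P^\mp\ne 0$, unlike $\widetilde P^\mp\widetilde P^\pm=0$; the coefficients $\pm\frac1{6h^2}$ and $\mp\frac1{24h^2}$ from these two sources combine, with the $\frac{it}{h}$ prefactor, to the stated $\pm\frac{it}{8h^3}$. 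If you discard those identity-matrix terms as "naively proportional to $\widetilde P^-\widetilde P^+$" and keep only the commutator, you will lose the $\Ric\eta\eta$ contribution entirely.
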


\begin{proof}
We will establish formula \eqref{projection on other eigenspace subps dirac} by expanding the first transport equation \eqref{second transport equation dirac} up to first order in $t$ and then acting with $\widetilde{P}^\mp$ on the left. Recall that $a^\pm_{-k}$ is defined by \eqref{transport equations formula 5}.

Working in normal coordinates centred at $y$ and using \eqref{real valued LC phase functions dirac expansion}--\eqref{weight real valued LC phase functions dirac expansion}, we obtain
\begin{equation}
\label{17 June 2019 formula 2}
\begin{aligned}
\mathfrak{S}^\pm_0 \widetilde{a}^\pm_{0}(t;0,\eta)&=
\Bigl\{
\left(\varphi^\pm_t+\widetilde{W}_\mathrm{prin}(x,\varphi_x^\pm)\right)\widetilde{\mathfrak{a}}^\pm_{-1} -i (\widetilde{\mathfrak{a}}^\pm_{-0})_t + \left[-i (w^\pm)^{-1}\left( w^\pm_t+\sigma^\alpha w^\pm_{x^\alpha} \right)+\widetilde{W}_0 \right]\widetilde{\mathfrak{a}}^\pm_{-0}
\Bigr\}
\Bigr|_{x=x^\pm}
\\
&=(\widetilde{W}_\mathrm{prin}(x^\pm,\xi^\pm)\mp h)\,\widetilde{\mathfrak{a}}^\pm_{-1}(t;0,\eta)+\frac{i\,t}{3h^2}\Ric_{\alpha\nu}(0)
\left(\eta^\alpha\eta^\nu \pm\frac12 h\,\eta^\alpha \,\widetilde{\sigma}^\nu(0)\right)\widetilde{P}^\pm(y,\eta)\\
&\pm \frac{t\,\eta^\alpha}{h}(\widetilde{W}_0)_{x^\alpha}(0)\,\widetilde{P}^\pm(y,\eta)+O(t^2).
\end{aligned}
\end{equation}
Furthermore, in view of Theorem~\ref{theorem small time expansion principal symbol} and Lemma~\ref{lemma expansion fractur S minus 1 and S minus 2}(b), we have
\begin{equation}
\label{17 June 2019 formula 3}
\begin{aligned}
\mathfrak{S}^\pm_{-1}\widetilde{a}^\pm_{1}(t;0,\eta)
&
=
\left[\dfrac{\partial^2}{\partial x^\alpha\partial \eta_\alpha}\pm\frac{t}{2}\, h_{\eta_\alpha\eta_\beta} \dfrac{\partial^2}{\partial x^\alpha\partial x^\beta} \right] \left. \left(\varphi^\pm_t+\widetilde{W}_{\mathrm{prin}}(x,\varphi^\pm_x)\right)\widetilde{\mathfrak{a}}_0^\pm\right|_{x=x^*}+O(t^2)\\
&=
-\frac{2\ir t}{3h^2}\Ric_{\alpha\nu}(0) \,\eta^\alpha\eta^\nu \widetilde{P}^\pm
\pm\ir t \left[\frac{2}{3h}R^{\mu}{}_\alpha{}^\nu{}_\beta(0)\,\eta_\mu\eta_\nu\,\widetilde{\sigma}^\beta(0)\widetilde{P}^\pm\right]_{\eta_\alpha}\\
&\pm it
\frac{\eta^\beta}{h}\left[(\widetilde{W}_\mathrm{prin})_{x^\alpha x^\beta}(0,\eta)\widetilde{P}^\pm\right]_{\eta_\alpha}
\\
&+\ir \,t \left[
 \frac{h_{\eta_\alpha\eta_\beta}}{3h}\,R^{\mu}{}_\alpha{}^\nu{}_\beta(0)\,\eta_\mu\eta_\nu\pm\frac{1}{2}h_{\eta_\alpha\eta_\beta}(\widetilde{W}_\mathrm{prin})_{x^\alpha x^\beta}(0,\eta) \right]\widetilde{P}^\pm+O(t^2).
\end{aligned}
\end{equation}
Adding up \eqref{17 June 2019 formula 2} and \eqref{17 June 2019 formula 3} and projecting along $\widetilde{P}^{\mp}$,
we arrive at
\begin{equation}
\label{17 June 2019 formula 4}
\begin{split}
\widetilde{P}^{\mp}\widetilde{\mathfrak{a}}_{-1}^\pm(t;0,\eta)&= \frac{i t}{h}\widetilde{P}^\mp\left\{ 
\frac{1}{12h}\Ric_{\alpha\nu}(0)\,\eta^\alpha \widetilde{\sigma}^\nu(0) \widetilde{P}^\pm-\frac{i\eta^\alpha}{2h}(\widetilde{W}_0)_{x^\alpha}(0)\,\widetilde{P}^\pm
\right.\\
&
+\left[ \frac{1}{3h}R^\mu{}_\alpha{}^\nu{}_\beta(0) \,\eta_\mu\eta_\nu\,\widetilde{\sigma}^\beta(0) \,\widetilde{P}^\pm \right]_{\eta_\alpha}+\frac{\eta^\beta}{2h} \left[(\widetilde{W}_\mathrm{prin})_{x^\alpha x^\beta}(0,\eta)\widetilde{P}^\pm\right]_{\eta_\alpha}
\\
&+\frac14 h_{\eta_\alpha\eta_\beta}\,(\widetilde{W}_\mathrm{prin})_{x^\alpha x^\beta}(0,\eta)\,\widetilde{P}^\pm \left.\right\}+O(t^2).
\end{split}
\end{equation}
Let us compute the summands in \eqref{17 June 2019 formula 4} separately. To this end, let us put
\begin{equation*}
\label{17 June 2019 formula 5}
\begin{aligned}
&A_1:=\frac{1}{12h}\Ric_{\alpha\nu}(0)\,\eta^\alpha \widetilde{\sigma}^\nu(0) \widetilde{P}^\pm,
&&A_2:=-\frac{i\eta^\alpha}{2h}(\widetilde{W}_0)_{x^\alpha}(0)\,\widetilde{P}^\pm,\\
&A_3:=\left[ \frac{1}{3h}R^\mu{}_\alpha{}^\nu{}_\beta(0) \,\eta_\mu\eta_\nu\,\widetilde{\sigma}^\beta(0) \,\widetilde{P}^\pm \right]_{\eta_\alpha},
&&A_4:=\frac{\eta^\beta}{2h} \left[(\widetilde{W}_\mathrm{prin})_{x^\alpha x^\beta}(0,\eta)\widetilde{P}^\pm\right]_{\eta_\alpha},\\
&A_5:=\frac14 h_{\eta_\alpha\eta_\beta}\,(\widetilde{W}_\mathrm{prin})_{x^\alpha x^\beta}(0,\eta)\,\widetilde{P}^\pm.
\end{aligned}
\end{equation*}

\begin{itemize}
\item $A_1$: It ensues from elementary properties of $\widetilde{P}^\pm$ that
\begin{equation}
\label{P sigma P dirac}
\begin{split}
\widetilde{P}^\mp \widetilde{\sigma}^\alpha \widetilde{P}^\pm
&=
\widetilde{P}^\mp [\widetilde{W}_\mathrm{prin} \widetilde{P}^\pm]_{\eta_\alpha}-\widetilde{P}^\mp \widetilde{W}_\mathrm{prin} \widetilde{P}^\pm_{\eta_\alpha}=\pm 2h\,\widetilde{P}^\mp \widetilde{P}^\pm_{\eta_\alpha}.
\end{split}
\end{equation}
Hence
\begin{equation}
\label{formula for A1 spbps mp dirac}
\widetilde{P}^\mp A_1=\widetilde{P}^\mp \left(\pm\frac{1}{6}\Ric_{\alpha\beta}(0)\,\eta^\alpha\widetilde{P}^\pm_{\eta_\beta} \right).
\end{equation}

\item $A_2$: Combining Corollary~\ref{corollary taylor expansion W0 dirac} with the identity
\begin{equation}
\label{6 April 2019 formula 2}
h_{\eta_\alpha\eta_\beta}=\dfrac{h^2\,\delta^{\alpha\beta}-\eta^\alpha\eta^\beta}{h^3}
\end{equation}
and using \eqref{P sigma P dirac}, we get
\begin{equation}
\label{formula for A2 spbps mp dirac}
\widetilde{P}^\mp A_2=\widetilde{P}^\mp \left(\pm \frac14\Ric_{\alpha\beta}(0)\,\eta^\alpha\widetilde{P}^\pm_{\eta_\beta} \right).
\end{equation}

\item $A_3$: We have
\begin{equation*}
\label{17 June 2019 formula 13}
\begin{aligned}
A_3&=\left[ \frac{1}{3h}R^\mu{}_\alpha{}^\nu{}_\beta(0) \,\eta_\mu\eta_\nu\,\widetilde{\sigma}^\beta(0) \,\widetilde{P}^\pm \right]_{\eta_\alpha}=\frac{1}{3h}R^\mu{}_\alpha{}^\nu{}_\beta(0)\,\widetilde{\sigma}^\beta(0)\left[\eta_\mu\eta_\nu \,\widetilde{P}^\pm \right]_{\eta_\alpha}\\
&=-\frac{1}{3 h}\Ric_{\mu\nu}(0)\,\eta^\mu \,\widetilde{\sigma}^\nu(0)\,\widetilde{P}^\pm \pm\frac{1}{6\,h^2} \Ric_{\mu\beta}(0)\,\eta^\mu\eta^\beta\, \mathrm{Id},
\end{aligned}
\end{equation*}
so that, by \eqref{P sigma P dirac}, 
\begin{equation}
\label{formula for A3 spbps mp dirac}
\widetilde{P}^\mp A_3=\widetilde{P}^\mp \left( \mp\frac23\Ric_{\alpha\beta}(0)\,\eta^\alpha\widetilde{P}^\pm_{\eta_\beta}\pm\frac{1}{6\,h^2}\Ric_{\alpha\beta}(0)\,\eta^\alpha\eta^\beta\right).
\end{equation}

\item $A_4$: Recalling \eqref{Pauli matrices projected along Levi Civita framing dirac}, we have
\begin{equation*}
\label{17 June 2019 formula 11}
\begin{aligned}
A_4&=
\frac{\eta^\beta}{2h}(\widetilde{\sigma}^\mu)_{x^\alpha x^\beta}(0) \left[\eta_\mu\,\widetilde{P}^\pm\right]_{\eta_\alpha}\\
&=-\frac{1}{12h}\Ric_{\alpha\beta}(0)\,\eta^\alpha \widetilde{\sigma}^\beta(0)\,\widetilde{P}^\pm 
\mp
\frac{1}{24 \,h^2} \Ric_{\alpha\beta}(0)\,\eta^\alpha\eta^\beta\, \mathrm{Id},
\end{aligned}
\end{equation*}
so that, by \eqref{P sigma P dirac}, 
\begin{equation}
\label{formula for A4 spbps mp dirac}
\widetilde{P}^\mp A_4=\widetilde{P}^\mp \left( \mp\frac16\Ric_{\alpha\beta}(0)\,\eta^\alpha\widetilde{P}^\pm_{\eta_\beta}\mp\frac{1}{24\,h^2}\Ric_{\alpha\beta}(0)\,\eta^\alpha\eta^\beta\right).
\end{equation}

\item $A_5$: In view of \eqref{6 April 2019 formula 2} and \eqref{Pauli matrices projected along Levi Civita framing dirac}, we have
\begin{equation*}
\label{17 June 2019 formula 6}
\begin{aligned}
A_5&=\frac14 \left(\frac{\delta^{\alpha\beta}}{h}-\frac{\eta^\alpha\eta^\beta}{h^3} \right)\frac16 \left[ R^\mu{}_{\beta\nu\alpha}+R^\mu{}_{\alpha\nu\beta}\right](0)\,\widetilde{\sigma}^\nu(0)\,\eta_\mu\,\widetilde{P}^\pm\\
&=\frac1{12\,h}\,\Ric_{\mu\nu}(0)\,\eta^\mu\widetilde{\sigma}^\nu(0)\, \widetilde{P}^\pm,
\end{aligned}
\end{equation*}
so that, by \eqref{P sigma P dirac}, 
\begin{equation}
\label{formula for A5 spbps mp dirac}
\widetilde{P}^\mp A_5=\widetilde{P}^\mp \left(\pm \frac16\Ric_{\alpha\beta}(0)\,\eta^\alpha\widetilde{P}^\pm_{\eta_\beta}\right).
\end{equation}
\end{itemize}

Substituting \eqref{formula for A1 spbps mp dirac}, \eqref{formula for A2 spbps mp dirac}, \eqref{formula for A3 spbps mp dirac}, \eqref{formula for A4 spbps mp dirac} and  \eqref{formula for A5 spbps mp dirac} into \eqref{17 June 2019 formula 4} we arrive at \eqref{projection on other eigenspace subps dirac}.
\end{proof}

Let us now move to the second transport equation.
\begin{lemma}
\label{lemma projection along same eigenspace Dirac}
The projection onto the positive (resp.~negative) eigenspace of $\widetilde{W}_\mathrm{prin}$ of the subprincipal symbol of the positive (resp.~negative) propagator is given by
\begin{multline}
\label{projection on same eigenspace subps dirac}
\widetilde{P}^\pm(x^\pm,\xi^\pm)\widetilde{\mathfrak{a}}_{-1}^\pm(t;y,\eta)=\mp it\,\widetilde{P}^\pm\left[\frac1{24 h}\mathcal{R}(0)
+
\frac{1}{8h^3}\Ric_{\alpha\beta}(0)\,\eta^\alpha\,\eta^\beta
+
\frac1{4h}\Ric_{\alpha\beta}(0)\,\eta^\alpha\,\widetilde{P}^\pm_{\eta_\beta}
\right]\\
+O(t^2).
\end{multline}
\end{lemma}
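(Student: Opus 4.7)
The plan is to follow the structure of the proof of Lemma~\ref{lemma projection along other eigenspace Dirac}, but now applying $\widetilde{P}^\pm(x^\pm,\xi^\pm)$ to the \emph{second} transport equation \eqref{third transport equation dirac} instead of applying $\widetilde{P}^\mp$ to the first. The critical new feature is that
\[
\widetilde{P}^\pm(x^\pm,\xi^\pm)\bigl(\widetilde{W}_\mathrm{prin}(x^\pm,\xi^\pm)\mp h(y,\eta)\bigr)\,\widetilde{\mathfrak{a}}^\pm_{-2}=0,
\]
since $\widetilde{W}_\mathrm{prin}(x^\pm,\xi^\pm)$ commutes with $\widetilde{P}^\pm(x^\pm,\xi^\pm)$ and acts as multiplication by $\pm h$ on its range. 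Consequently the algebraic piece of $\widetilde{P}^\pm\mathfrak{S}^\pm_0 a^\pm_{-1}$ involving $\widetilde{\mathfrak{a}}^\pm_{-2}$ drops out, and what survives is a first-order matrix ODE for $\widetilde{P}^\pm\widetilde{\mathfrak{a}}^\pm_{-1}$ rather than an algebraic identity. Together with the initial datum $\widetilde{\mathfrak{a}}^\pm_{-1}(0;y,\eta)=0$ supplied by Lemma~\ref{lemma initial condition subprincipal symbol dirac}, the desired $O(t^2)$ expansion then reduces to evaluating the forcing term at $t=0$:
\[
\widetilde{P}^\pm\widetilde{\mathfrak{a}}^\pm_{-1}(t;y,\eta)=-it\,\widetilde{P}^\pm\bigl[\mathfrak{S}^\pm_{-2}a^\pm_1+\mathfrak{S}^\pm_{-1}a^\pm_0\bigr]\big|_{t=0}+O(t^2).
\]

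The residual task is a lengthy but mechanical computation of this right-hand side in normal geodesic coordinates centred at $y$. The contribution from $\mathfrak{S}^\pm_{-1}a^\pm_0$ is obtained by taking formula \eqref{17 June 2019 formula 3} of the previous proof, dropping all explicit $t$-factors (since we evaluate at $t=0$), and projecting with $\widetilde{P}^\pm$ rather than $\widetilde{P}^\mp$. The genuinely new ingredient is the Hessian-type term
\[
\mathfrak{S}^\pm_{-2}a^\pm_1\big|_{t=0}=-\tfrac12\bigl[\partial^2_{x^\alpha\eta_\alpha}\bigr]^2\bigl\{\bigl(\varphi^\pm_t+\widetilde{W}_\mathrm{prin}(x,\varphi^\pm_x)\bigr)\widetilde{P}^\pm(y,\eta)\bigr\}\big|_{t=0,\,x=0},
\]
obtained by invoking Lemma~\ref{lemma expansion fractur S minus 1 and S minus 2}(a). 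The Riemann-tensor correction $\pm\frac{1}{3h}R^\alpha{}_\mu{}^\beta{}_\nu\eta_\alpha\eta_\beta x^\mu x^\nu$ in the phase expansion \eqref{real valued LC phase functions dirac expansion} and the curvature correction \eqref{Pauli matrices projected along Levi Civita framing dirac} of the projected Pauli matrices jointly supply Riemann-tensor terms which, once contracted twice by $\partial^2_{x^\alpha\eta_\alpha}$, collapse via the first Bianchi identity and repeated Ricci contractions into contributions proportional to the scalar curvature $\mathcal{R}(y)$ and to $\Ric_{\alpha\beta}(y)\eta^\alpha\eta^\beta$.

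The final step is the assembly and simplification. Exactly as in the previous lemma, identity \eqref{P sigma P dirac} and the relation $\widetilde{P}^\pm\widetilde{P}^\pm_{\eta_\alpha}\widetilde{P}^\pm=0$ (obtained by differentiating $(\widetilde{P}^\pm)^2=\widetilde{P}^\pm$) are used repeatedly to convert all occurrences of $\widetilde{P}^\pm\widetilde\sigma^\alpha\widetilde{P}^\pm$ into projector-derivative expressions. The main obstacle is the bookkeeping: five building blocks analogous to $A_1,\ldots,A_5$ in the proof of Lemma~\ref{lemma projection along other eigenspace Dirac} must be re-projected onto $\widetilde{P}^\pm$ (now yielding different combinations because of the \emph{same}-eigenspace projection), and the additional $\mathfrak{S}^\pm_{-2}a^\pm_1$ term must be carefully tracked, making sure that all Riemann and Ricci contractions align so that the output collapses into exactly the three terms proportional to $\mathcal{R}(y)/h$, $\Ric_{\alpha\beta}(y)\eta^\alpha\eta^\beta/h^3$, and $\Ric_{\alpha\beta}(y)\eta^\alpha\widetilde{P}^\pm_{\eta_\beta}/h$ displayed on the right-hand side of \eqref{projection on same eigenspace subps dirac}, with the overall sign $\mp$ emerging from the $\pm$-factors carried through the computation.
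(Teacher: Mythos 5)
Your overall strategy is the same as the paper's: project the second transport equation \eqref{third transport equation dirac} onto the positive/negative eigenspace, observe that the algebraic piece involving $\widetilde{\mathfrak{a}}^\pm_{-2}$ is annihilated by $\widetilde{P}^\pm(x^\pm,\xi^\pm)$, use the initial datum $\widetilde{\mathfrak{a}}^\pm_{-1}(0;y,\eta)=0$ from Lemma~\ref{lemma initial condition subprincipal symbol dirac}, and evaluate the resulting forcing term at $t=0$. The reduction
\[
\widetilde{P}^\pm\widetilde{\mathfrak{a}}^\pm_{-1}(t;y,\eta)=-it\,\widetilde{P}^\pm\bigl[\mathfrak{S}^\pm_{-2}a^\pm_1+\mathfrak{S}^\pm_{-1}a^\pm_0\bigr]\big|_{t=0}+O(t^2)
\]
is correct, and your treatment of the $\mathfrak{S}^\pm_{-2}a^\pm_1$ contribution via Lemma~\ref{lemma expansion fractur S minus 1 and S minus 2}(a) is right.

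However, there is a genuine error in how you plan to handle the term $\mathfrak{S}^\pm_{-1}a^\pm_0$. You claim it can be read off from formula \eqref{17 June 2019 formula 3} of the preceding proof by ``dropping all explicit $t$-factors and projecting with $\widetilde{P}^\pm$.'' But \eqref{17 June 2019 formula 3} computes $\mathfrak{S}^\pm_{-1}\widetilde{a}^\pm_{1}$, which is a term of the \emph{first} transport equation, whereas the second transport equation \eqref{third transport equation dirac} contains $\mathfrak{S}^\pm_{-1}\widetilde{a}^\pm_{0}$. These are genuinely different objects: $a^\pm_1=(\varphi^\pm_t+W_\mathrm{prin}(x,\varphi^\pm_x))\mathfrak{a}^\pm_0$ is a purely symbolic/phase quantity, while $a^\pm_0$ (cf.\ \eqref{transport equations formula 5} with $k=0$) contains the zero-order part $\widetilde{W}_0$ of the Dirac operator, the derivatives $w^\pm_t$ and $\sigma^\alpha w^\pm_{x^\alpha}$ of the weight, and $(\mathfrak{a}^\pm_0)_t$. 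Consequently $\mathfrak{S}^\pm_{-1}\widetilde{a}^\pm_0\big|_{t=0}$ produces Ricci terms coming from the weight expansion \eqref{weight real valued LC phase functions dirac expansion} as well as a contribution $i(\widetilde{W}_0)_{x^\alpha}(y)\widetilde{P}^\pm_{\eta_\alpha}$ from Corollary~\ref{corollary taylor expansion W0 dirac} --- none of which appear in \eqref{17 June 2019 formula 3}. Moreover, ``dropping the explicit $t$-factors'' from \eqref{17 June 2019 formula 3} is not the same as evaluating it at $t=0$: every term there carries a factor of $t$, so its value at $t=0$ is zero and the $O(t)$ coefficient is an entirely different quantity from the one you need. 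You must compute $\mathfrak{S}^\pm_{-1}\widetilde{a}^\pm_0\big|_{t=0}$ from scratch (this is the content of the paper's formula \eqref{15 April 2019 formula 31}), and the subsequent simplification requires Corollary~\ref{corollary taylor expansion W0 dirac} and the weight expansion in addition to the ingredients you list.

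A smaller inaccuracy: you refer to ``five building blocks analogous to $A_1,\dots,A_5$,'' but the structure here is different --- after the reduction above one decomposes the forcing into the three pieces $B_1$, $B_2$, $B_3$ of \eqref{13 April 2019 forumla 2}, corresponding to the Hessian of $(\widetilde W_\mathrm{prin})_{x^\alpha x^\beta}$, the Riemann-curvature/weight term, and the term mixing $\operatorname{Ric}_{\alpha\mu}\widetilde\sigma^\mu$ with $(\widetilde W_0)_{x^\alpha}$. This is a bookkeeping point, but it reinforces that the $\mathfrak{S}^\pm_{-1}a^\pm_0$ contribution has to be worked out afresh rather than recycled from Lemma~\ref{lemma projection along other eigenspace Dirac}.
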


\begin{proof}
We will establish formula \eqref{projection on same eigenspace subps dirac} by computing the second transport equation \eqref{third transport equation dirac} up to zeroth order in $t$ and then acting with $\widetilde{P}^\pm$ on the left.

With account of Lemma~\ref{lemma expansion fractur S minus 1 and S minus 2}, we have 
\begin{equation}
\label{15 April 2019 formula 30}
\begin{aligned}
\left.\mathfrak{S}^\pm_{-2}\widetilde{a}^\pm_{1}\right|_{t=0}&=\left.-\frac12 \dfrac{\partial^4}{\partial x^\alpha\partial\eta_\alpha \partial x^\beta \partial \eta_\beta} \left( \varphi_t^\pm+\widetilde{W}_\mathrm{prin}(x,\varphi_x^\pm) \right) \mathfrak{a}_0^\pm\right|_{x=0,t=0}\\
&=
-\frac12 \dfrac{\partial^4}{\partial x^\alpha\partial\eta_\alpha \partial x^\beta \partial \eta_\beta}\left(
\mp h\pm\frac1{3\,h} \,R^\gamma{}_\mu{}^\rho{}_\nu(0)\,\eta_\gamma\,\eta_\rho \,x^\mu\, x^\nu+O(\| x\|^3)
\right.\\
&\left.+\widetilde{\sigma}^\alpha(0)(\eta_\alpha+O(\| x \|^3))
\widetilde{P}^\pm\Bigr)
\right|_{x=0,t=0}\\
&=-\dfrac{\partial^2}{\partial\eta_\alpha \partial \eta_\beta} \left(\pm
\frac1{3\,h} \,R^\gamma{}_\alpha{}^\rho{}_\beta(0)\,\eta_\gamma\,\eta_\rho +\frac12 (\widetilde{W}_\mathrm{prin})_{x^\alpha x^\beta}(0,\eta)
\right) \widetilde{P}^\pm,
\end{aligned}
\end{equation}
\begin{equation}
\label{15 April 2019 formula 31}
\begin{aligned}
\left.\mathfrak{S}^\pm_{-1}\widetilde{a}^\pm_{0}\right|_{t=0}&=i\,\dfrac{\partial^2}{\partial x^\alpha\partial\eta_\alpha}\left[ \left( \mp h+\widetilde{W}_\mathrm{prin}(x,\eta) \right) \widetilde{\mathfrak{a}}^\pm_{-1}(0;y,\eta)-i (\widetilde{\mathfrak{a}}^\pm_{0})_t\right.\\
&\left.\left.+\left(\pm\frac{i \eta_\mu}{3h}\Ric^\mu{}_\nu(0)\,x^\nu+-\frac{i}6\operatorname{Ric}_{\mu\nu}(0)\,\widetilde{\sigma}^\mu(0)\,x^\nu+O(\|x\|^2)\right) \widetilde{P}^\pm+\widetilde{W}_0(x)\widetilde{P}^\pm\right]\right|_{t=0,x=0}\\
&=\mp\dfrac{\partial}{\partial\eta_\alpha}\left( \frac{\eta_\mu}{3h^{(j)}}\Ric^\mu{}_\alpha(0)\widetilde{P}^\pm \right)
+\frac16 \operatorname{Ric}_{\alpha\mu}(0)\,\widetilde{\sigma}^\mu(0) \,\widetilde{P}^\pm_{\eta_\alpha}
+i\, (\widetilde{W}_0)_{x^\alpha}(0)\widetilde{P}^\pm_{\eta_\alpha}
\end{aligned}
\end{equation}
and
\begin{equation}
\label{15 April 2019 formula 32}
\begin{aligned}
\left.\mathfrak{S}^\pm_{0}\widetilde{a}^\pm_{-1}\right|_{t=0}&=(\mp h+\widetilde{W}_\mathrm{prin}(0,\eta))\widetilde{\mathfrak{a}}^\pm_{-2}(0)-i\,(\widetilde{\mathfrak{a}}^\pm_{-1})_t|_{t=0}.
\end{aligned}
\end{equation}
In carrying out the above calculations we used Theorem~\ref{theorem small time expansion principal symbols dirac} and Lemma~\ref{lemma initial condition subprincipal symbol dirac}. Note that, when multiplying on the left by $\widetilde{P}^\pm$, the terms containing $\widetilde{\mathfrak{a}}^\pm_{-2}$ disappear. Summing up \eqref{15 April 2019 formula 30}, \eqref{15 April 2019 formula 31} and \eqref{15 April 2019 formula 32}, and projecting along $\widetilde{P}^\pm$, we obtain
\begin{equation}
\label{15 April 2019 formula 33}
\begin{aligned}
(\widetilde{P}^\pm \widetilde{\mathfrak{a}}^\pm_{-1})_t\,(0;y,\eta)=&i \widetilde{P}^\pm\dfrac{\partial^2}{\partial\eta_\alpha \partial \eta_\beta} \left[\left(
\pm \frac1{3h} \,R^\gamma{}_\alpha{}^\rho{}_\beta(0)\,\eta_\gamma\,\eta_\rho + \frac12(\widetilde{W}_\mathrm{prin})_{x^\alpha x^\beta}(0,\eta)
\right) \widetilde{P}^\pm\right]\\
&\pm i \widetilde{P}^\pm\frac{\partial}{\partial\eta_\alpha}\left[ \frac{\eta_\mu}{3h}\Ric^\mu{}_\alpha(0)\widetilde{P}^\pm \right]
-\frac{i}6 \operatorname{Ric}_{\alpha\mu}(0)\,\widetilde{\sigma}^\mu(0) \,\widetilde{P}^\pm_{\eta_\alpha}
\\&+\widetilde{P}^\pm (\widetilde{W}_0)_{x^\alpha}(0)\widetilde{P}^\pm_{\eta_\alpha}+O(t).
\end{aligned}
\end{equation}
Using the identity
\begin{equation*}
\pm\dfrac{\partial}{\partial \eta_\beta}\left[ \frac1{3h} \,R^\gamma{}_\alpha{}^\rho{}_\beta(0)\,\eta_\gamma\,\eta_\rho  \widetilde{P}^\pm\right]=\mp \frac{\eta_\mu}{3h}\Ric^\mu{}_\alpha(0)\widetilde{P}^\pm \pm \frac1{3h} \,R^\gamma{}_\alpha{}^\rho{}_\beta(0)\,\eta_\gamma\,\eta_\rho  \widetilde{P}^\pm_{\eta_\beta},
\end{equation*}
formula \eqref{15 April 2019 formula 33} becomes
\begin{equation}
\label{15 April 2019 formula 33 bis}
\begin{aligned}
(\widetilde{P}^\pm \widetilde{\mathfrak{a}}^\pm_{-1})_t\,(0;y,\eta)=&\frac{i}2 \widetilde{P}^\pm\dfrac{\partial^2}{\partial\eta_\alpha \partial \eta_\beta} \left[(\widetilde{W}_\mathrm{prin})_{x^\alpha x^\beta}(0,\eta) \widetilde{P}^\pm\right] \pm i \widetilde{P}^\pm\frac{\partial}{\partial\eta_\alpha}\left[  \frac1{3h} \,R^\gamma{}_\alpha{}^\rho{}_\beta(0)\,\eta_\gamma\,\eta_\rho\,\widetilde{P}^\pm_{\eta_\beta} \right]
\\
&
-\frac{i}6 \operatorname{Ric}_{\alpha\mu}(0)\,\widetilde{\sigma}^\mu(0) \,\widetilde{P}^\pm_{\eta_\alpha}+\widetilde{P}^\pm (\widetilde{W}_0)_{x^\alpha}(0)\widetilde{P}^\pm_{\eta_\alpha}+O(t).
\end{aligned}
\end{equation}
Let us put
\begin{equation}
\label{13 April 2019 forumla 2} 
\begin{aligned}
&B_1:=\frac{i}2\left[(\widetilde{W}_\mathrm{prin})_{x^\alpha x^\beta}(0,\eta) \widetilde{P}^\pm\right]_{\eta_\alpha\eta_\beta},
&&B_2:=\frac{i}{3h} \,R^\gamma{}_\alpha{}^\rho{}_\beta(0)\,\eta_\gamma\,\eta_\rho\,\widetilde{P}^\pm_{\eta_\beta},\\
&B_3:=-\frac{i}6 \operatorname{Ric}_{\alpha\mu}(0)\,\widetilde{\sigma}^\mu(0) \,\widetilde{P}^\pm_{\eta_\alpha}+(\widetilde{W}_0)_{x^\alpha}(0)\widetilde{P}^\pm_{\eta_\alpha}.
\end{aligned}
\end{equation}

\begin{itemize}
\item $B_1$: It follows from \eqref{projections in terms of principal symbol dirac}, Corollary~\ref{corollary projection Pauli matrices} and \eqref{6 April 2019 formula 2} that
\begin{equation}
\label{13 April 2019 forumla 4}
\begin{split}
\widetilde{P}^\pm\,B_1
&=\frac{i}{6}\widetilde{P}^\pm\left[ 
 R^\mu{}_{\beta\nu\alpha}\,\eta_\mu\,\widetilde{\sigma}^\nu\frac12 \left( \mathrm{Id}\pm\frac{\eta_\rho\, \widetilde{\sigma}^\rho}{h} \right)\right]_{\eta_\alpha\eta_\beta}=\pm\frac{i}{12}\widetilde{P}^\pm
 R^\mu{}_{\beta\nu\alpha}\,\widetilde{\sigma}^\nu\, \widetilde{\sigma}^\rho
 \left(
 \frac{\eta_\mu\,\eta_\rho}{h}
  \right)_{\eta_\alpha\eta_\beta}\\
&=\pm\left(-\frac{i}{12 h}\mathcal{R}(0)+\frac{i}{12h^3}\Ric_{\alpha\beta}(0)\,\eta^\alpha\eta^\beta\right)\,\widetilde{P}^\pm.
\end{split}
\end{equation}

\item $B_2$: Differentiating \eqref{projections in terms of principal symbol dirac} with respect to $\eta_\beta$ yields
\begin{equation}
\begin{split}
\label{14 April 2019 formula 1}
\widetilde{P}^\pm_{\eta_\beta}
&
=
\pm
\frac{1}{2h} (\widetilde{W}_\mathrm{prin})_{\eta_\beta}
\mp
\frac{\eta^\beta}{2h^3}\widetilde{W}_\mathrm{prin}
\\
&
=
\pm\frac{1}{2}\left( \frac{\widetilde{\sigma}^\beta}{h}-\frac{\eta^\beta\,\eta_\rho \,\widetilde{\sigma}^\rho}{h^3} \right).
\end{split}
\end{equation}

Substituting \eqref{14 April 2019 formula 1} into $B_2$ in \eqref{13 April 2019 forumla 2} we obtain
\begin{equation}
\label{14 April 2019 formula 2}
\begin{split}
\pm\widetilde{P}^\pm\,B_2&=\frac{i}6 \widetilde{P}^\pm\,R^{\mu}{}_\alpha{}^\nu{}_\beta \left[\widetilde{\sigma}^\beta \left(\frac{\eta_\mu\eta_\nu}{h^2} \right)_{\eta_\alpha} 
+
\widetilde{\sigma}^\rho\left(\frac{\eta_\mu\eta_\nu\eta^\beta\eta_\rho}{h^4} \right)_{\eta_\alpha}
\right]\\
&=-\frac{i}{6\,h^2} \widetilde{P}^\pm\,\Ric^{\mu}{}_\beta(0)\,\eta_\mu\,\widetilde{\sigma}^\beta(0).
\end{split}
\end{equation}

\item $B_3$: By means of Corollary~\ref{corollary taylor expansion W0 dirac} and formula \eqref{14 April 2019 formula 1} we get
\begin{equation}
\label{14 April 2019 formula 4}
\begin{split}
B_3&=\left( \pm \frac{i}{4} \Ric_{\alpha\beta}(0)\,\sigma^\alpha\mp \frac{i}{6} \Ric_{\alpha\beta}(0)\,\sigma^\alpha \right) \frac{1}{2}\left( \frac{\sigma^\beta}{h}-\frac{\eta^\beta \,\sigma^\rho\eta_\rho}{h^3} \right)\\
&=\pm \frac{i}{24} \Ric_{\alpha\beta}(0)\,\sigma^\alpha \left( \frac{\sigma^\beta}{h}-\frac{\eta^\beta \,\sigma^\rho\eta_\rho}{h^3} \right)\\
&=\pm \left(\frac{i}{24h} \mathcal{R}(0)\,\mathrm{Id}-\frac{i}{24h^3}\Ric_{\alpha\beta}(0)\, \eta^\beta \eta_\rho \sigma^\alpha(0)\,\sigma^\rho(0)\right).
\end{split}
\end{equation}
Now, since $\widetilde{P}^\pm\,\widetilde{\sigma}^\rho(0)\eta_\rho=\widetilde{P}^\pm\,\widetilde{W}_\mathrm{prin}(0,\eta)=\pm h\, \widetilde{P}^\pm$ and $\widetilde{\sigma}^\alpha\widetilde{\sigma}^\rho=-\widetilde{\sigma}^\rho \widetilde{\sigma}^\alpha+2\,\delta^{\alpha\rho}\,\mathrm{Id}$, formula \eqref{14 April 2019 formula 4} implies
\begin{equation}
\label{14 April 2019 formula 5}
\begin{split}
\widetilde{P}^\pm B_3
&=\widetilde{P}^\pm\left(\pm  
\frac{i}{24\,h} \mathcal{R}(0)
+\frac{i}{24 \,h^2}\Ric_{\alpha\beta}(0)\,\eta^\alpha\,\widetilde{\sigma}^\beta
\mp\frac{i}{12 h^3}\Ric_{\alpha\beta}(0)\,\eta^\alpha\eta^\beta
\right).
\end{split}
\end{equation}
\end{itemize}
Summing up \eqref{13 April 2019 forumla 4}, \eqref{14 April 2019 formula 2} and \eqref{14 April 2019 formula 5} we arrive at
\begin{equation}
\label{14 April 2019 formula 5 bis}
(\widetilde{P}^\pm \widetilde{\mathfrak{a}}^\pm_{-1})_t\,(0;y,\eta)=i \widetilde{P}^\pm \left(\mp\frac{1}{24h}\mathcal{R}(0)-\frac1{8h^2} \operatorname{Ric}_{\alpha\beta}(0)\,\eta^\alpha \,\widetilde{\sigma}^\beta(0) \right) +O(t).
\end{equation}
A straightforward calculation shows that
\begin{equation*}
\widetilde{P}^\pm \widetilde{\sigma}^\alpha=\pm \widetilde{P}^\pm\left( \frac{\eta^\alpha}{h}+2h \widetilde{P}^\pm_{\eta_\alpha}\right).
\end{equation*}
Substituting the above expression into \eqref{14 April 2019 formula 5 bis} and integrating in time with initial condition \eqref{initial condition subprincipal symbol dirac}, we obtain \eqref{projection on same eigenspace subps dirac}.

\end{proof}

The pieces of information from Lemma~\ref{lemma projection along other eigenspace Dirac} and Lemma~\ref{lemma projection along same eigenspace Dirac} can be combined to give the following result.

\begin{theorem}
\label{theorem small time expansion subprincipal symbols levi-civita dirac}
For the choice of the Levi-Civita framing, the subprincipal symbols of the positive and negative propagators admit the following small time expansion:
\begin{equation}
\label{small time expansion subprincipal symbols levi-civita dirac}
\widetilde{\mathfrak{a}}_{-1}^\pm(t;y,\eta)=\mp it
\left(
\frac{1}{24\,h}\mathcal{R(}y)\,\widetilde{P}^\pm(y,\eta)
-
\frac{1}{8 h^2}\operatorname{Ric}_{\alpha\beta}(y)\,\eta^\alpha\,(\widetilde{W}_\mathrm{prin})_{\eta_\beta}(y,\eta)
\right)
+
O(t^2)\,.
\end{equation}
\end{theorem}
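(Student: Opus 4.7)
The plan is to combine Lemma~\ref{lemma projection along other eigenspace Dirac} and Lemma~\ref{lemma projection along same eigenspace Dirac}, which together give the two orthogonal components of $\widetilde{\mathfrak{a}}_{-1}^\pm(t;y,\eta)$ with respect to the spectral decomposition of $\widetilde{W}_\mathrm{prin}$ at the endpoints of the Hamiltonian trajectory. My first step is to use the resolution of the identity $\widetilde{P}^+(x^\pm,\xi^\pm) + \widetilde{P}^-(x^\pm,\xi^\pm) = \mathrm{Id}$ to write
\begin{equation*}
\widetilde{\mathfrak{a}}_{-1}^\pm(t;y,\eta) = \widetilde{P}^\pm(x^\pm,\xi^\pm)\,\widetilde{\mathfrak{a}}_{-1}^\pm(t;y,\eta) + \widetilde{P}^\mp(x^\pm,\xi^\pm)\,\widetilde{\mathfrak{a}}_{-1}^\pm(t;y,\eta)
\end{equation*}
and then substitute the expressions from the two lemmas. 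Since both expressions are already $O(t)$, one may replace $\widetilde{P}^\mp(x^\pm,\xi^\pm)$ by $\widetilde{P}^\mp(y,\eta)$ at the cost of a term $O(t^2)$, which is harmless.

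Next I would perform the algebraic simplification that produces a single expression in terms of $\widetilde{P}^\pm(y,\eta)$. Adding the two contributions for, say, the positive case, the $\tfrac{1}{24h}\mathcal{R}\widetilde{P}^+$ piece is untouched; the two $\tfrac{1}{8h^3}\Ric_{\alpha\beta}\eta^\alpha\eta^\beta$ terms combine into $-\tfrac{1}{8h^3}\Ric_{\alpha\beta}\eta^\alpha\eta^\beta(\widetilde{P}^+ - \widetilde{P}^-) = -\tfrac{1}{8h^4}\Ric_{\alpha\beta}\eta^\alpha\eta^\beta\,\widetilde{W}_\mathrm{prin}$ via \eqref{spectral decomposition principal symbol dirac}; and the two $\tfrac{1}{4h}\Ric_{\alpha\beta}\eta^\alpha\widetilde{P}^+_{\eta_\beta}$ terms combine, using $\widetilde{P}^+\widetilde{P}^+_{\eta_\beta} + \widetilde{P}^-\widetilde{P}^+_{\eta_\beta} = \widetilde{P}^+_{\eta_\beta}$, into $-\tfrac{1}{4h}\Ric_{\alpha\beta}\eta^\alpha\widetilde{P}^+_{\eta_\beta}$.

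The final step is to convert the resulting expression into the form appearing in the statement by means of the identity \eqref{14 April 2019 formula 1}, namely
\begin{equation*}
\widetilde{P}^\pm_{\eta_\beta} = \pm\frac{1}{2h}(\widetilde{W}_\mathrm{prin})_{\eta_\beta} \mp \frac{\eta^\beta}{2h^3}\widetilde{W}_\mathrm{prin}.
\end{equation*}
Substituting this into the combined expression, the $\eta^\beta\,\widetilde{W}_\mathrm{prin}$ contribution from $\widetilde{P}^+_{\eta_\beta}$ cancels exactly the $-\tfrac{1}{8h^4}\Ric_{\alpha\beta}\eta^\alpha\eta^\beta\,\widetilde{W}_\mathrm{prin}$ term produced above, and what remains is $-\tfrac{1}{8h^2}\Ric_{\alpha\beta}\eta^\alpha(\widetilde{W}_\mathrm{prin})_{\eta_\beta}$, yielding \eqref{small time expansion subprincipal symbols levi-civita dirac}. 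The negative case follows by an identical computation or, more elegantly, by invoking the symmetry \eqref{relation between two geodesic flows} and \eqref{symmetry eigenvalues and eigenvectors}.

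There is no real obstacle here: the work has already been done in establishing the two lemmas, and what is left is pure linear algebra on the fibres of $T^*M$. The only point requiring a modicum of care is the bookkeeping of the two cancellations -- the one between $\widetilde{P}^+\widetilde{W}_\mathrm{prin}$ and $\widetilde{P}^-\widetilde{W}_\mathrm{prin}$ contributions and the one between the two $\widetilde{W}_\mathrm{prin}$ pieces when $\widetilde{P}^+_{\eta_\beta}$ is expanded -- both of which hinge on the spectral identities \eqref{spectral decomposition principal symbol dirac}--\eqref{projections in terms of principal symbol dirac} for a symbol with two simple eigenvalues $\pm h$.
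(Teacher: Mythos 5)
Your proof is correct and takes exactly the same route as the paper: both sum the conclusions of Lemmata~\ref{lemma projection along other eigenspace Dirac} and \ref{lemma projection along same eigenspace Dirac} using the resolution of identity $\widetilde{P}^+(x^\pm,\xi^\pm)+\widetilde{P}^-(x^\pm,\xi^\pm)=\operatorname{Id}$, and then substitute \eqref{14 April 2019 formula 1} to replace $\widetilde{P}^\pm_{\eta_\beta}$ by $(\widetilde{W}_\mathrm{prin})_{\eta_\beta}$, which triggers the cancellation of the $\operatorname{Ric}_{\alpha\beta}\eta^\alpha\eta^\beta\,\widetilde{W}_\mathrm{prin}$ contributions. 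Your version simply spells out the intermediate algebra (the splitting into $\widetilde{P}^+\pm\widetilde{P}^-$ pieces, the two cancellations) more explicitly than the paper does.
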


\begin{proof}
Summing up formulae \eqref{projection on other eigenspace subps dirac} and \eqref{projection on same eigenspace subps dirac}, we obtain
\begin{equation*}
\begin{split}
\widetilde{\mathfrak{a}}_{-1}^\pm(t;y,\eta)&=\mp
\frac{it}{24\,h}\mathcal{R(}y)\,\widetilde{P}^\pm(y,\eta)
\mp\frac{it}{8h^4}\operatorname{Ric}_{\alpha\beta}(y)\,\eta^\alpha\,\eta^\beta\,\widetilde{W}_\mathrm{prin}(y,\eta)\\
&
-
\frac{it}{4h}\operatorname{Ric}_{\alpha\beta}(y)\,\eta^\alpha\,\widetilde{P}^\pm_{\eta_\beta}(y,\eta)
+
O(t^2)\,.
\end{split}
\end{equation*}
The substitution of \eqref{14 April 2019 formula 1} into the RHS of the above equation gives \eqref{small time expansion subprincipal symbols levi-civita dirac}.
\end{proof}

Note that if the manifold is Ricci-flat then $\widetilde{\mathfrak{a}}_{-1}^\pm(t;y,\eta)=O(t^2)$.

\subsection{Invariant reformulation}

In the previous subsections we derived the quite elegant and compact formulae \eqref{small time expansion principal LC dirac} and \eqref{small time expansion subprincipal symbols levi-civita dirac}, which were obtained under the assumption that the chosen framing is the Levi-Civita framing at $y$. Now the task at hand is to obtain similar formulae for the Dirac operator $W$ corresponding to an arbitrary framing $\{e_j\}_{j=1}^3$.

Given a framing $\{e_j\}_{j=1}^3$ and a point $y\in M$, there exits a special unitary matrix-function $G$, defined in a neighbourhood of $y$, such that $\{e_j\}_{j=1}^3$ and the Levi-Civita framing $\{\widetilde{e}_j\}_{j=1}^3$ generated by $\{e_j\}_{j=1}^3$ at $y$ are related in accordance with 
\begin{equation}
\label{new framing generated by G Levi-civita}
{e}_j{}^\alpha(x)=\frac12 \operatorname{tr}(s_j\,G^*(x)\,s^k\,G(x))\,\widetilde{e}_k{}^\alpha(x), \qquad G(y)=\mathrm{Id},
\end{equation}
cf.~\eqref{new framing generated by G} and \eqref{gauge transfomation in terms of O}. The symbols $\widetilde{\mathfrak{a}}^\pm$ and $\mathfrak{a}^\pm$ are related as
\begin{equation}
\label{relation between mathfrak a and tilde mathfrak a}
\mathfrak{a}^\pm=\mathfrak{S}^\pm[G^*(x) \, \widetilde{\mathfrak{a}}^\pm \,G(y)],
\end{equation}
cf.\ Section~\ref{Global propagator for the massless Dirac operator}. Note that on the RHS of \eqref{relation between mathfrak a and tilde mathfrak a} the transformed symbol is acted upon by amplitude-to-symbol operators \eqref{definition amplitude to symbol with j}. The latter are needed because the gauge transformation $G$ introduces an $x$-dependence in the amplitude, which has to be excluded.

Working in normal coordinates centred at $y$, formula \eqref{relation between mathfrak a and tilde mathfrak a}, combined with \eqref{x plus minus normal coordinates dirac} and \eqref{small time expansion principal LC dirac}, implies
\begin{equation}
\label{transformation principal symbol under gauge transformation dirac}
\begin{split}
\mathfrak{a}_0^\pm &=G^*(x^\pm)P^\pm\\
&=P^\pm \pm\frac{t\,\eta^\alpha}{h} G^*_{x^\alpha}(y) P^\pm +\frac{t^2}{2}\frac{\eta^\alpha\eta^\beta}{h^2} G^*_{x^\alpha x^\beta}(y)P^\pm
+O(t^3)\\
&=P^\pm \pm\frac{t\,\eta^\alpha}{h} \,\nabla_\alpha
G^*(y) \,P^\pm +\frac{t^2}{2}\frac{\eta^\alpha\eta^\beta}{h^2} \nabla_\alpha\nabla_\beta G^*(y)P^\pm
+O(t^3).
\end{split}
\end{equation}
Similarly, by means of \eqref{x plus minus normal coordinates dirac} and Lemma~\ref{lemma expansion fractur S minus 1 and S minus 2}, from \eqref{relation between mathfrak a and tilde mathfrak a} we get
\begin{equation}
\label{transformation subprincipal symbol under gauge transformation dirac}
\begin{split}
\mathfrak{a}_{-1}^\pm
&
=
\mathfrak{S}^\pm_{-1} [G^*(x)\,\mathfrak{\widetilde a}^\pm_{0}]+\mathfrak{S}^\pm_0[G^*(x)\,\mathfrak{\widetilde a}^\pm_{-1}]
\\
&
=
i\,G^*_{x^\alpha}(y)\,P^\pm_{\eta_\alpha}
\pm 
it\,G^*_{x^\alpha x^\beta}(y) 
\left(h_{\eta_\beta}\, P^\pm_{\eta_\alpha}+\frac12 h_{\eta_\alpha\eta_\beta}\,P^\pm \right)\\
&+ \widetilde{\mathfrak{a}}^\pm_{-1}+O(t^2)
\\
&
=
i\,\nabla_\alpha G^*(y)\,P^\pm_{\eta_\alpha}
\pm 
it\,\nabla_\alpha\nabla_\beta G^*(y) 
\left(h_{\eta_\beta}\, P^\pm_{\eta_\alpha}+\frac12 h_{\eta_\alpha\eta_\beta}\,P^\pm \right)\\
&+ \widetilde{\mathfrak{a}}^\pm_{-1}+O(t^2).
\end{split}
\end{equation}

The last step towards expressing \eqref{transformation principal symbol under gauge transformation dirac} and \eqref{transformation subprincipal symbol under gauge transformation dirac} invariantly is writing $\nabla G$ and $\nabla \nabla G$ in terms of geometric invariants.  Theorem~\ref{theorem covariant derivatives gauge transformation} tells us that
\begin{equation}
\label{covariant derivative gauge transformation special case}
\nabla_\alpha G(y)=-\frac{i}{2} \overset{*}{K}{}_{\alpha\beta}(y)\,\sigma^\beta(y).
\end{equation}
The following theorem provides an expression for the second covariant derivatives of the gauge transformation.

\begin{theorem}
\label{theorem covariant derivatives gauge transformation Levi-civita}
Let us fix a point $y$ and let the special unitary matrix-function $G$ be such that our framing $\{e_j\}_{j=1}^3$ and the Levi-Civita framing $\{\widetilde e_j\}_{j=1}^3$ generated by $\{e_j\}_{j=1}^3$ at $y$ are related in accordance with \eqref{new framing generated by G Levi-civita} in a neighbourhood of $y$.
Then we have
\begin{equation}
\label{double covariant derivative of gauge transformation G Levi-civita}
\nabla_\alpha\nabla_\beta \,G(y)=-\frac{i}{4}
\left(
\nabla_\alpha \overset{*}{K}{}_{\beta\mu}(y)
+
\nabla_\beta \overset{*}{K}{}_{\alpha\mu}(y)
\right)
\sigma^\mu(y)
-\frac{1}{4} \overset{*}{K}{}_{\alpha\mu}(y)\overset{*}{K}{}_\beta{}^\mu(y)\operatorname{Id},
\end{equation}
where $K$ is the contorsion tensor of the Weitzenb\"ock connection (see Appendix~\ref{Geometric properties of the Weitzenb\"ock connection}) associated with the framing $\{ e_j\}_{j=1}^3$ and the star stands for the Hodge dual
applied in the first and third indices (see formula \eqref{hodge star contorsion}).
\end{theorem}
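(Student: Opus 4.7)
The proof follows the strategy of Theorem~\ref{theorem covariant derivatives gauge transformation} pushed to second order. Choose normal geodesic coordinates $x$ centred at $y$ such that $e_j{}^\alpha(y)=\delta_j{}^\alpha$. In these coordinates $\Gamma(y)=0$, so covariant derivatives at $y$ coincide with partial derivatives. Write
\begin{equation*}
G(x)=\mathrm{Id}+A_\alpha\, x^\alpha+\tfrac12\, B_{\alpha\beta}\, x^\alpha x^\beta +O(\|x\|^3),
\end{equation*}
so that $A_\alpha=\nabla_\alpha G(y)$ and $B_{\alpha\beta}=\nabla_\alpha\nabla_\beta G(y)$, the latter automatically symmetric in $(\alpha,\beta)$. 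Theorem~\ref{theorem covariant derivatives gauge transformation}, combined with Lemma~\ref{lemma Levi-Civita framing} (which implies that the contorsion of the Weitzenb\"ock connection associated with $\{\widetilde e_j\}$ vanishes at $y$), already gives $A_\alpha=-\tfrac{i}{2}\overset{*}{K}_{\alpha\mu}(y)\sigma^\mu(y)$. The task is therefore to determine $B_{\alpha\beta}$.

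The first step is to unpack \eqref{new framing generated by G Levi-civita} into the Pauli-matrix identity $\sigma^\alpha(x)=G^*(x)\,\widetilde\sigma^\alpha(x)\,G(x)$ and expand both sides to second order in $x$. The second-order term of $\widetilde\sigma^\alpha$ is supplied by Corollary~\ref{corollary projection Pauli matrices}; the corresponding term of $\sigma^\alpha$ is obtained by expressing the partial derivatives of $\{e_j\}$ in terms of the contorsion of the Weitzenb\"ock connection at $y$ and invoking \eqref{relation beteween christoffel and riemann dirac}. Matching coefficients of $x^\mu x^\nu$ and projecting onto the traceless part produces an equation for the $\sigma^\mu$-component of $B_{\alpha\beta}$ which, after symmetrisation in $(\alpha,\beta)$, yields the contribution $-\tfrac{i}{4}\bigl(\nabla_\alpha\overset{*}{K}_{\beta\mu}(y)+\nabla_\beta\overset{*}{K}_{\alpha\mu}(y)\bigr)\sigma^\mu(y)$.

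The scalar (i.e.~proportional to $\operatorname{Id}$) part of $B_{\alpha\beta}$ is invisible to the matching above and must be extracted from the $\mathrm{SU}(2)$ constraints on $G$: expanding $G^*G=\mathrm{Id}$ to second order gives $B_{\alpha\beta}+B_{\alpha\beta}^{\,*}=-(A_\alpha^{\,*}A_\beta+A_\beta^{\,*}A_\alpha)$, while $\det G=1$ fixes the trace. Substituting $A_\alpha=-\tfrac{i}{2}\overset{*}{K}_{\alpha\mu}\sigma^\mu$ and using the anticommutation relation $\{s^\mu,s^\nu\}=2\delta^{\mu\nu}\operatorname{Id}$ produces precisely $-\tfrac{1}{4}\overset{*}{K}_{\alpha\mu}(y)\overset{*}{K}_\beta{}^\mu(y)\operatorname{Id}$.

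The main technical obstacle is the symmetrisation in $(\alpha,\beta)$: the contorsion-derivative terms emerging from the Taylor expansion come with a priori asymmetric index structures, and securing the manifestly symmetric combination in \eqref{double covariant derivative of gauge transformation G Levi-civita} requires careful bookkeeping of the Riemann-tensor contributions from the expansion of $\widetilde e_j{}^\alpha$ together with the algebraic symmetries $R^\alpha{}_{\beta\mu\nu}=-R^\alpha{}_{\beta\nu\mu}$ and the first Bianchi identity. Once the symmetric form has been secured, converting partial into covariant derivatives at $y$ is trivial because $\Gamma(y)=0$.
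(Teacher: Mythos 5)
Your strategy is sound and does reproduce \eqref{double covariant derivative of gauge transformation G Levi-civita}, but the route differs from the paper's in two concrete ways. The paper parametrises $G(x)=e^{is^kA_k(x)}$ (so $\mathrm{SU}(2)$-ness is automatic) and then differentiates the connection-coefficient identity $\Upsilon^\alpha{}_{\beta\gamma}=e_k{}^\alpha\,\partial e^k{}_\gamma/\partial x^\beta$ once, symmetrises in the two derivative indices, and contracts with $\varepsilon_\alpha{}^\gamma{}_\rho$ to land directly on $\nabla\overset{*}{K}$. You instead Taylor-expand $G$ and expand the conjugation identity $\sigma^\alpha=G^*\widetilde\sigma^\alpha G$ (which is indeed equivalent to \eqref{new framing generated by G Levi-civita} since $G^*s^kG=\tfrac12 s^j\operatorname{tr}(s_jG^*s^kG)$), importing $\mathrm{SU}(2)$-ness as two side constraints. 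Both arguments hinge on the same inputs — Corollary~\ref{corollary projection Pauli matrices}/Lemma~\ref{lemma Levi-Civita framing}, $\Gamma(y)=0$, $\widetilde K(y)=0$ — and both see the curvature terms drop out: in the paper via the symmetry argument in \eqref{proof nabla G temp 16 LC}--\eqref{proof nabla G temp 17 LC}, in your scheme because the Riemann contribution to $[\widetilde\sigma^\alpha]_{x^\mu x^\nu}(y)$ supplied by Corollary~\ref{corollary projection Pauli matrices} coincides exactly with the Riemann contribution to $\sigma^\alpha_{x^\mu x^\nu}(y)$ coming from $\partial_\nu\Gamma^\alpha{}_{\mu\gamma}$ via \eqref{relation beteween christoffel and riemann dirac}.

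Two small imprecisions in your write-up, neither fatal. First, the scalar part of $B_{\alpha\beta}$ is \emph{not} invisible to the Pauli-matrix matching. Decomposing $B_{\mu\nu}=B^{(0)}_{\mu\nu}\,\mathrm{Id}+B^{(t)}_{\mu\nu}$ and using the $\mathrm{SU}(2)$ constraints $B^{(t)*}=-B^{(t)}$ and $B^{(0)}\in\mathbb{R}$, the $B$-dependent piece of the second-order coefficient of $G^*\widetilde\sigma^\alpha G$ is
\begin{equation*}
\tfrac12\bigl(B^*_{\mu\nu}\sigma^\alpha+\sigma^\alpha B_{\mu\nu}\bigr)=B^{(0)}_{\mu\nu}\sigma^\alpha+\tfrac12\bigl[\sigma^\alpha,B^{(t)}_{\mu\nu}\bigr],
\end{equation*}
whose first summand is traceless and hence survives the projection you invoke. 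One must therefore determine $B^{(0)}$ from $G^*G=\mathrm{Id}$ and $\det G=1$ first (as you do), \emph{substitute} it, and only then solve for $B^{(t)}$ from the commutators $[\sigma^\alpha,\,\cdot\,]$, $\alpha=1,2,3$, which jointly determine a traceless matrix uniquely. Second, the first Bianchi identity plays no role: the curvature cancellation described above is an exact matching of the two Taylor coefficients, requiring at most the antisymmetry $R^\alpha{}_{\beta\mu\nu}=-R^\alpha{}_{\beta\nu\mu}$ and the pair-swap symmetry of the Riemann tensor.
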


\begin{proof}
The proof is given in Appendix~\ref{appendix proof double covariant derivative gauge transformation}.
\end{proof}

\begin{remark}
Note that, remarkably, the curvature of the Levi-Civita connection does not appear in the RHS of \eqref{double covariant derivative of gauge transformation G Levi-civita}.
\end{remark}

Substituting \eqref{covariant derivative gauge transformation special case} and \eqref{double covariant derivative of gauge transformation G Levi-civita} into \eqref{transformation principal symbol under gauge transformation dirac} and \eqref{transformation subprincipal symbol under gauge transformation dirac}  we arrive at the following result.

\begin{theorem}
\label{theorem invariant expression principal and subprincipal symbols dirac}
Let $W$ be the Dirac operator \eqref{massless dirac definition equation}. Then the the principal and subprincipal symbols of the positive and negative propagators admit the following small time expansions:
\begin{equation}
\begin{split}
\mathfrak{a}_0^\pm 
&
=
\left[\operatorname{Id} \pm \frac{it}{2}h_{\eta_\alpha}\,\overset{*}{K}{}_{\alpha\beta}\,(W_\mathrm{prin})_{\eta_\beta}\right] P^\pm\\
&
+\frac{t^2}{8}\frac{\eta^\alpha\eta^\beta}{h^2} \left[i\bigl(
\nabla_\alpha \overset{*}{K}{}_{\beta\mu}(y)
+
\nabla_\beta \overset{*}{K}{}_{\alpha\mu}(y)
\bigr)
(W_\mathrm{prin})_{\eta_\mu}-\overset{*}{K}{}_{\alpha\mu}(y)\overset{*}{K}{}_\beta{}^\mu(y)\right] P^\pm +O(t^3),
\end{split}
\end{equation}
\begin{equation}
\begin{split}
\mathfrak{a}_{-1}^\pm
&
=
-\frac12 \overset{*}{K}{}_{\alpha\beta}\,(W_\mathrm{prin})_{\eta_\beta}\,P^\pm_{\eta_\alpha}
\\
&\mp it
\left(
\frac{1}{24\,h}\mathcal{R}\,P^\pm
-
\frac{1}{8 h^2}\operatorname{Ric}_{\alpha\beta}\,\eta^\alpha\,({W}_\mathrm{prin})_{\eta_\beta}
\right)
\\
&\mp
\frac{t}4\,\left(
\nabla_\alpha \overset{*}{K}{}_{\beta\mu}
+
\nabla_\beta \overset{*}{K}{}_{\alpha\mu}
\right)
(W_\mathrm{prin})_{\eta_\mu}
\left(h_{\eta_\beta}\, P^\pm_{\eta_\alpha}+\frac12 h_{\eta_\alpha\eta_\beta}\,P^\pm \right)\\
&\mp\frac{it}{4} \overset{*}{K}{}_{\alpha\mu}\overset{*}{K}{}_\beta{}^\mu
\left(h_{\eta_\beta}\, P^\pm_{\eta_\alpha}+\frac12 h_{\eta_\alpha\eta_\beta}\,P^\pm \right)\\
&+O(t^2),
\end{split}
\end{equation}
where $\overset{*}{K}$ denotes the Hodge dual in the first and third indices of the contorsion tensor of the Weitzenb\"ock connection associated with the framing $\{e_j\}_{j=1}^3$.
\end{theorem}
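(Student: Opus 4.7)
The plan is to combine the Levi-Civita-framing expansions from Theorems~\ref{theorem small time expansion principal symbols dirac} and \ref{theorem small time expansion subprincipal symbols levi-civita dirac} with the gauge-transformation expansions \eqref{transformation principal symbol under gauge transformation dirac}--\eqref{transformation subprincipal symbol under gauge transformation dirac} and with the formulae for the covariant derivatives of $G$ supplied by Theorems~\ref{theorem covariant derivatives gauge transformation} and \ref{theorem covariant derivatives gauge transformation Levi-civita}. The proof is essentially a direct substitution; the only non-trivial bookkeeping lies in converting covariant derivatives of $G$ into those of $G^*$ and in rewriting Pauli matrices in invariant form.

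First I would plug $\widetilde{\mathfrak{a}}^\pm_0 = \widetilde P^\pm = P^\pm$ and the expression \eqref{small time expansion subprincipal symbols levi-civita dirac} for $\widetilde{\mathfrak{a}}^\pm_{-1}$ into \eqref{transformation principal symbol under gauge transformation dirac}--\eqref{transformation subprincipal symbol under gauge transformation dirac}, using that the spectral projections for $W$ and $\widetilde W$ coincide since the two operators share the same principal symbol. Next, since the gauge-transformation expansions involve $\nabla G^*$ and $\nabla\nabla G^*$ whereas Theorems~\ref{theorem covariant derivatives gauge transformation} and \ref{theorem covariant derivatives gauge transformation Levi-civita} provide $\nabla G$ and $\nabla\nabla G$, I would differentiate the unitarity relation $GG^* = \operatorname{Id}$ twice at the point $y$ (where $G(y) = \operatorname{Id}$) to obtain
\begin{equation*}
\nabla_\alpha G^*(y) = -\nabla_\alpha G(y), \qquad \nabla_\alpha\nabla_\beta G^*(y) = -\nabla_\alpha\nabla_\beta G(y) + \nabla_\alpha G(y)\,\nabla_\beta G(y) + \nabla_\beta G(y)\,\nabla_\alpha G(y).
\end{equation*}
Substituting \eqref{covariant derivative gauge transformation special case} into the anticommutator and using $\sigma^\mu\sigma^\nu + \sigma^\nu\sigma^\mu = 2\delta^{\mu\nu}\operatorname{Id}$, one finds $\nabla_\alpha G(y)\nabla_\beta G(y) + \nabla_\beta G(y)\nabla_\alpha G(y) = -\tfrac12\,\overset{*}{K}{}_{\alpha\mu}\overset{*}{K}{}_\beta{}^\mu\,\operatorname{Id}$, which combined with \eqref{double covariant derivative of gauge transformation G Levi-civita} yields
\begin{equation*}
\nabla_\alpha\nabla_\beta G^*(y) = \frac{i}{4}\bigl(\nabla_\alpha \overset{*}{K}{}_{\beta\mu} + \nabla_\beta \overset{*}{K}{}_{\alpha\mu}\bigr)\sigma^\mu - \frac{1}{4}\,\overset{*}{K}{}_{\alpha\mu}\,\overset{*}{K}{}_\beta{}^\mu\,\operatorname{Id}.
\end{equation*}

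The last step is the invariant rewriting: using $\sigma^\mu = (W_\mathrm{prin})_{\eta_\mu}$ and $\eta^\alpha/h = h_{\eta_\alpha}$, every Pauli matrix is replaced by a derivative of the principal symbol, which yields directly the coefficient $\pm\tfrac{it}{2}\,h_{\eta_\alpha}\overset{*}{K}{}_{\alpha\beta}(W_\mathrm{prin})_{\eta_\beta}$ at order $t^1$ and the bracket $i(\nabla_\alpha\overset{*}{K}{}_{\beta\mu}+\nabla_\beta\overset{*}{K}{}_{\alpha\mu})(W_\mathrm{prin})_{\eta_\mu} - \overset{*}{K}{}_{\alpha\mu}\overset{*}{K}{}_\beta{}^\mu$ at order $t^2$ in the expansion of $\mathfrak{a}_0^\pm$. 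The expansion of $\mathfrak{a}_{-1}^\pm$ is assembled by summing three contributions: the $t=0$ piece $i\,\nabla_\alpha G^*(y)\,P^\pm_{\eta_\alpha}$ produced by the amplitude-to-symbol reduction of the gauge transformation, the $O(t)$ correction built from $\nabla\nabla G^*(y)$, and the transported Levi-Civita subprincipal symbol \eqref{small time expansion subprincipal symbols levi-civita dirac}.

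I do not expect a genuine obstacle beyond this: the only delicate point is bookkeeping, since many factors of $t$, $h$, $\sigma$ and $\overset{*}{K}$ must combine with the precise numerical coefficients claimed in the statement. Because $\nabla G$ and $\nabla\nabla G$ depend only on the contorsion tensor of the Weitzenb\"ock connection and its Levi-Civita covariant derivatives, the resulting expressions are manifestly invariant under coordinate changes and depend on the framing $\{e_j\}_{j=1}^3$ solely through $\overset{*}{K}$, as required.
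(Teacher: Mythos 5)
Your proposal is correct and follows essentially the same approach as the paper, which proves the result by substituting the expressions for $\nabla G$ and $\nabla\nabla G$ from Theorems~\ref{theorem covariant derivatives gauge transformation} and~\ref{theorem covariant derivatives gauge transformation Levi-civita} together with the Levi-Civita-framing formulae into the gauge-transformation expansions \eqref{transformation principal symbol under gauge transformation dirac}--\eqref{transformation subprincipal symbol under gauge transformation dirac}. The only detail you spell out that the paper leaves implicit is the passage from $\nabla G$, $\nabla\nabla G$ to $\nabla G^*$, $\nabla\nabla G^*$; your derivation via differentiating $GG^*=\operatorname{Id}$ is correct (and agrees with simply Hermitian-conjugating \eqref{covariant derivative of gauge transformation G} and \eqref{double covariant derivative of gauge transformation G Levi-civita}, since $\overset{*}{K}$ is real and $\sigma^\mu$ Hermitian), and your bookkeeping of the numerical coefficients checks out against the statement.
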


\section{An application: spectral asymptotics}
\label{An application: spectral asymptotics}

In this section we will compute the third Weyl coefficient for the Dirac operator. In doing so we will use the same notation as in Section~\ref{Statement of the problem} --- recall in particular formulae \eqref{expansion for mollified derivative of counting function}, \eqref{local counting functions} and the definition of the function $\mu$.

\begin{theorem}
\label{theorem about third Weyl coefficient}
The third local Weyl coefficients for the Dirac operator are
\begin{equation}
\label{theorem about third Weyl coefficient formula}
c_0^\pm(y)=-\frac1{48\pi^2}\mathcal{R}(y),
\end{equation}
where $\mathcal{R}$ is scalar curvature. 
\end{theorem}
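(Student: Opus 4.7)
My plan is to apply the wave method. I express
\[
(N_\pm'*\mu)(y,\lambda)=\frac{1}{2\pi}\int_\mathbb{R}\hat\mu(t)\,e^{it\lambda}\operatorname{tr}u^\pm(t,y,y)\,dt
\]
as the inverse $t$-Fourier transform of $\hat\mu(t)\operatorname{tr}u^\pm(t,y,y)$, and invoke Theorem~\ref{theorem propagator for A} (with a real-valued Levi-Civita phase function, which is legitimate on the small support of $\hat\mu$ because the injectivity radius is positive) to represent $u^\pm(t,y,y)$ as a single global oscillatory integral modulo a smooth function. Since $c_0^\pm(y)$ is a scalar spectral invariant of $W$, I am free to choose the framing, so I work in the Levi-Civita framing at $y$ (Definition~\ref{definition Levi-Civita framing}) and in normal geodesic coordinates centred at~$y$.

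In this adapted setup $g^{\alpha\beta}(y)=\delta^{\alpha\beta}$, and the restrictions to $x=y$ read
\[
\varphi^\pm(t,y;y,\eta)=\mp h(y,\eta)\,t+O(t^4),\qquad w^\pm(t,y;y,\eta)=1+O(t^3),
\]
by \eqref{real valued LC phase functions dirac expansion}--\eqref{weight real valued LC phase functions dirac expansion}. Theorem~\ref{theorem small time expansion principal symbols dirac} gives the \emph{exact} identity $\widetilde{\mathfrak{a}}^\pm_0(t;y,\eta)=\widetilde P^\pm(y,\eta)$, so $\operatorname{tr}\widetilde{\mathfrak{a}}^\pm_0\equiv 1$; Theorem~\ref{theorem small time expansion subprincipal symbols levi-civita dirac}, together with the trace-freedom of the Pauli matrices (which annihilates the contraction of $\operatorname{Ric}$ with $(W_\mathrm{prin})_{\eta_\beta}=\widetilde\sigma^\beta$), yields
\[
\operatorname{tr}\widetilde{\mathfrak{a}}^\pm_{-1}(t;y,\eta)=\mp\frac{i\,t\,\mathcal R(y)}{24\,h(y,\eta)}+O(t^2).
\]
I then rescale $\eta=\lambda\xi$, $t=\tau/\lambda$ and change to polar coordinates $\xi=r\omega$, so that $h=r$ and the cosphere $\{h=1\}$ is the standard $S^2$. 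Straightforward bookkeeping shows that the higher-order corrections of $\varphi^\pm$, $w^\pm$ and $\hat\mu$, as well as the constancy in $t$ of $\operatorname{tr}\widetilde{\mathfrak{a}}^\pm_0$, feed only into Weyl coefficients of order strictly lower than $\lambda^0$. Only two contributions reach the critical order: the $\tau$-linear part of $\lambda^{-1}\operatorname{tr}\widetilde{\mathfrak{a}}^\pm_{-1}$, i.e.\ $\mp i\tau\mathcal R/(24\lambda^2 r)$, and $\lambda^{-2}r^{-2}\operatorname{tr}\widetilde{\mathfrak{a}}^\pm_{-2}(0;y,\omega)$. Performing the $\tau$-integration via $\int e^{i\tau(1-r)}d\tau=2\pi\delta(1-r)$ and $\int\tau\,e^{i\tau(1-r)}d\tau=-2\pi i\,\delta'(1-r)$, then the $r$-integration against $r^2\,dr$, I obtain
\[
c_0^\pm(y)=-\frac{\mathcal R(y)}{48\pi^2}+\frac{1}{(2\pi)^3}\int_{S^2}\operatorname{tr}\widetilde{\mathfrak{a}}^\pm_{-2}(0;y,\omega)\,d\omega.
\]

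The hard part will be showing that the cosphere integral on the right vanishes. My plan is to invoke a parity argument: the symmetry \eqref{symmetry eigenvalues and eigenvectors}, which gives $v^-(y,\eta)=v^+(y,-\eta)$ and hence $P^-(y,\eta)=P^+(y,-\eta)$, propagates to all orders of the full symbol of the spectral projector $U^\pm(0)$ in the invariant representation of Section~\ref{Invariant description}, yielding $\widetilde{\mathfrak{s}}^-(y,\eta)=\widetilde{\mathfrak{s}}^+(y,-\eta)$. Combined with $U^+(0)+U^-(0)\overset{\mod\Psi^{-\infty}}{=}\mathrm{Id}$, and with the fact --- following the analysis of \cite[Section~6]{wave} --- that the symbol of $\mathrm{Id}$ in this invariant representation is even in $\eta$ and has vanishing cosphere averages at homogeneity orders $\le-1$, the cosphere integral of $\operatorname{tr}\widetilde{\mathfrak{s}}^+_{-2}$ vanishes: its odd part is killed by the symmetry $\omega\mapsto-\omega$ of $S^2$, and its even part is forced to be half the identity's contribution, which itself averages to zero. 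Since $\widetilde{\mathfrak{a}}^\pm_{-2}(0;y,\eta)=\widetilde{\mathfrak{s}}^\pm_{-2}(y,\eta)$, this finishes the argument and yields $c_0^\pm(y)=-\mathcal R(y)/(48\pi^2)$. As a back-up direct route, one can compute $\widetilde{\mathfrak{a}}^\pm_{-2}(0;y,\eta)$ by extending the proof of Theorem~\ref{theorem about subprincipal symbols of propagators at zero} by one order, making essential use of $\nabla\nabla G(y)$ from Theorem~\ref{theorem covariant derivatives gauge transformation Levi-civita}.
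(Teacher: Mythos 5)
Your overall strategy coincides with the paper's: wave method, localisation to small times using the real-valued Levi-Civita phase function, and the Levi-Civita framing at $y$ so that Theorems~\ref{theorem small time expansion principal symbols dirac} and~\ref{theorem small time expansion subprincipal symbols levi-civita dirac} apply. Your more explicit bookkeeping of orders is a useful elaboration of the paper's terse derivation of \eqref{first three Weyl coefficients theorem proof equation 12}, which the paper obtains by pointing to \cite[Appendix~B]{wave}; you are right to single out the two surviving contributions to $c_0^\pm$, namely the $t$-linear part of $\operatorname{tr}\widetilde{\mathfrak{a}}^\pm_{-1}$ and the cosphere average of $\operatorname{tr}\widetilde{\mathfrak{a}}^\pm_{-2}(0;y,\cdot)$.

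The gap is in your handling of the second contribution. The assertion that the principal-level symmetry $P^-(y,\eta)=P^+(y,-\eta)$ from \eqref{symmetry eigenvalues and eigenvectors} ``propagates to all orders'' of the full symbols $\widetilde{\mathfrak{s}}^\pm$ of $U^\pm(0)$ is not justified, and it does not hold in general. The full symbol of $\theta(\pm W)$ depends on more than $W_\mathrm{prin}$: it involves $W_\mathrm{sub}$ and the phase-function geometry, neither of which respects the sign change $\eta\mapsto-\eta$. Indeed, Theorem~\ref{theorem about subprincipal symbols of propagators at zero} gives $[U^-(0)]_\mathrm{sub}(y,\eta)=-[U^+(0)]_\mathrm{sub}(y,\eta)$, which for a general framing differs from $[U^+(0)]_\mathrm{sub}(y,-\eta)$ whenever $\overset{*}{T}(y)\ne 0$; in the Levi-Civita framing both sides happen to vanish at this order (Lemma~\ref{lemma initial condition subprincipal symbol dirac}), so the failure is masked rather than absent. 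At order $-2$ there is therefore nothing forcing $\widetilde{\mathfrak{s}}^+_{-2}(y,\eta)+\widetilde{\mathfrak{s}}^+_{-2}(y,-\eta)$ to equal the degree-$(-2)$ component of the symbol of $\mathrm{Id}$, which is what your even/odd split relies on. Establishing $\int_{S^2}\operatorname{tr}\widetilde{\mathfrak{s}}^\pm_{-2}(y,\omega)\,d\omega=0$ --- the step you yourself flag as ``the hard part'' --- requires a genuine computation; the paper imports precisely this from \cite[Appendix~B]{wave}, not from a parity argument. Your back-up route (extending the proof of Theorem~\ref{theorem about subprincipal symbols of propagators at zero} by one order using Theorem~\ref{theorem covariant derivatives gauge transformation Levi-civita}) is the honest way to close the argument, but it is only sketched, so as written the proof does not close.
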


\begin{proof}
Let us fix a point $y\in M$ and choose normal geodesic coordinates $x$ centred at~$y$.
Let us also choose a Levi-Civita framing $\{\widetilde e_j\}_{j=1}^3$,
see Definition~\ref{definition Levi-Civita framing};
here we make use of the fact that Weyl coefficients
do not depend on the choice of framing.

We have
\begin{equation}
\label{hyperbolic approach equation 1 plus}
(N_+'*\mu)(y,\lambda)
\,=\,
\mathcal{F}^{-1}
\left[
\mathcal{F}
\left[
(N_+'*\mu)
\right]
\right](y,\lambda)
\,=\,
\mathcal{F}^{-1}
\left[
\operatorname{tr}u_+(t,y,y)\,\hat\mu(t)
\right],
\end{equation}
\begin{equation}
\label{hyperbolic approach equation 1 minus}
(N_-'*\mu)(y,\lambda)
\,=\,
\mathcal{F}^{-1}
\left[
\mathcal{F}
\left[
(N_-'*\mu)
\right]
\right](y,\lambda)
\,=\,
\mathcal{F}^{-1}
\left[
\operatorname{tr}\overline{u_-(t,y,y)}\,\hat\mu(t)
\right],
\end{equation}
where $u_\pm$ is the Schwartz kernel
of the propagator $U^{\pm}$
and $\,\operatorname{tr}\,$ stands for the matrix trace.
Note that at each point of the manifold
the quantity $\,\operatorname{tr}u_\pm(t,y,y)\,$
is a distribution in the variable $t$ and
the construction presented in preceding sections allows us to write down this distribution
explicitly, modulo a smooth function.

Our task is to substitute
\eqref{integral kernel of U pm}
into the right-hand sides of
\eqref{hyperbolic approach equation 1 plus}
and
\eqref{hyperbolic approach equation 1 minus}
and expand the resulting quantities in powers of $\lambda$ as $\lambda\to+\infty$.
Thus, the problem reduces to the analysis of explicit integrals in four variables,
$\eta_1,\eta_2,\eta_3$ and $t$,
depending on the parameter~$\lambda\,$.
In what follows we drop the $y$ in our intermediate calculations.

The construction presented in preceding sections
tells us that the only singularity of the distribution $\,\operatorname{tr}u_\pm(t,y,y)\,\hat\mu(t)\,$ is at $t=0$.
Hence, in what follows, we can assume that the support of $\hat\mu$ is arbitrarily small.
In particular, this allows us to use the real-valued ($\epsilon=0$) Levi-Civita phase functions $\varphi^\pm$.

Theorems \ref{theorem small time expansion principal symbols dirac}
and \ref{theorem small time expansion subprincipal symbols levi-civita dirac}
imply that
\begin{equation}
\label{small time expansion of trace of principal symbol}
\operatorname{tr}\widetilde {\mathfrak{a}}^{\pm}_0(t;\eta)=
1\,,
\end{equation}
\begin{equation}
\label{small time expansion of trace of subprincipal symbol}
\operatorname{tr}\widetilde {\mathfrak{a}}^{\pm}_{-1}(t;\eta)=
\mp\,
\frac{i}{24\,\|\eta\|}\,\mathcal{R}\,t+O(t^2)\,.
\end{equation}

Formula \cite[(B.11)]{wave} reads $\,\varphi^+(t,\eta)=-\|\eta\|\,t+O(t^4)\,$, which,
in view of \eqref{relation between the two Levi-Civita phase functions}, implies
\begin{equation}
\label{first three Weyl coefficients theorem proof equation 4}
\varphi^\pm(t,\eta)=\mp\|\eta\|\,t+O(t^4)\,.
\end{equation}

Using formulae
\eqref{hyperbolic approach equation 1 plus}--\eqref{first three Weyl coefficients theorem proof equation 4}
and arguing as in \cite[Appendix B]{wave}, we conclude that
\begin{multline}
\label{first three Weyl coefficients theorem proof equation 12}
(N_\pm'*\mu)(y,\lambda)
=
\frac{S_{2}}{(2\pi)^{4}}\int_{\mathbb{R}^{2}}
\left(
r^{2}-\dfrac{1}{24}\,\mathcal{R}
\right)
e^{i(\lambda-r)t}\,\hat\mu(t)\,dr\,dt
\\
+\,O(\lambda^{-1})
\quad
\text{as}
\quad
\lambda\to+\infty,
\end{multline}
where $S_2=4\pi$ is the surface area of the 2-sphere.
But
\begin{equation*}
\label{integral in r and t weyl dirac}
\frac1{2\pi}
\int_{\mathbb{R}^{2}}
r^m\,
e^{i(\lambda-r)t}\,\hat\mu(t)\,dr\,dt
=\lambda^m,\qquad m=0,1,2,\ldots,
\end{equation*}
so \eqref{first three Weyl coefficients theorem proof equation 12} can be rewritten as
\begin{equation*}
\label{first three Weyl coefficients theorem proof equation 12 continued}
(N_\pm'*\mu)(y,\lambda)
=
\frac1{2\pi^2}\,\lambda^2
-\frac1{48\pi^2}\,\mathcal{R}(y)
+\,O(\lambda^{-1})
\quad
\text{as}
\quad
\lambda\to+\infty.
\end{equation*}
\end{proof}

\begin{remark}
Let us compare the spectrum of the Dirac operator with the spectrum of the Laplacian.
Working on the same 3-manifold, let $\Delta$ be the Laplace--Beltrami operator and let $N(y,\lambda)$
be the local counting function for the operator $\sqrt{-\Delta}\,$. Then
\[
(N'*\mu)(y,\lambda)=
c_{2}(y)\,\lambda^{2}
+
c_{1}(y)\,\lambda
+
c_{0}(y)
+\dots
\quad
\text{as}
\quad
\lambda\to+\infty,
\]
where the values of the first three Weyl coefficients are provided by \cite[Theorem B.2]{wave}.
Comparing these with
\eqref{first two weyl coefficients}
and
\eqref{theorem about third Weyl coefficient formula},
we conclude that
\[
c^\pm_{2}(y)=c_{2}(y)\,,
\qquad
c^\pm_{1}(y)=c_{1}(y)=0\,,
\qquad
c^\pm_{0}(y)=-\frac12\,c_{0}(y)\,.
\]
We see that the large (in modulus) eigenvalues of the Dirac operator
are distributed approximately the same way as the eigenvalues of the operator $\sqrt{-\Delta}\,$,
differing only in the third Weyl coefficient.
\end{remark}

\begin{remark}
\label{remark about third weyl coefficient}
There are, of course, alternative ways of computing the third Weyl coefficients. One can, for example, calculate $c_0^\pm$ by examining the quantities
\[
\operatorname{Tr} e^{-W^2 \,t} \quad\text{and}\quad\operatorname{Tr} W\,e^{-W^2 \,t},
\]
which are related to the counting functions via the Mellin transform, as in \cite{DuGu, branson}. See also \cite{chamsedine}.
\end{remark}

\section{Examples}
\label{Examples}

In this section we present two explicit examples, which show how our constructions work in practice and  which give us an opportunity to double-check our formulae.

The specific choice of examples is motivated by the fact that the first, $M=\mathbb{S}^3$, is isotropic in momentum whereas the second, $M=\mathbb{S}^2\times \mathbb{S}^1$, is anisotropic in momentum.

\subsection{The case $M=\mathbb{S}^3$}
\label{The case 3-sphere}

Let $\mathbb{R}^4$ be Euclidean space equipped with Cartesian coordinates $\textbf{x}^\alpha$, $\alpha=1,2,3,4$, and put
\[
\widehat{\textbf{e}}_4=\begin{pmatrix}
0\\0\\0\\1
\end{pmatrix}.
\]
Consider the 3-sphere\footnote{We shifted the sphere so as to place the south pole at the origin.}
\[
\mathbb{S}^3:=\{\textbf{x}+\widehat{\textbf{e}}_4 \in \mathbb{R}^4\,|\,\|\textbf{x}\|=1\}
\]
with orientation prescribed in accordance with \cite[Appendix~A]{sphere}, equipped with the standard round metric $g$ and with the global framings $\{V_{\pm,k}\}_{k=1}^3$ defined as the restriction to $\mathbb{S}^3$ of the vector fields in $\mathbb{R}^4$
\begin{equation}
\label{31 January 2019 formula 8}
\begin{aligned}
&\mathbf{V}_{\pm,1}:=(1-\textbf{x}^4)\dfrac{\partial}{\partial \textbf{x}^1}\mp \textbf{x}^3\frac{\partial}{\partial \textbf{x}^2} \pm \textbf{x}^2\dfrac{\partial}{\partial \textbf{x}^3}+\textbf{x}^1\dfrac{\partial}{\partial \textbf{x}^4},\\
&\mathbf{V}_{\pm,2}:=
\pm \textbf{x}^3\dfrac{\partial}{\partial \textbf{x}^1}+(1-\textbf{x}^4)\frac{\partial}{\partial \textbf{x}^2}\mp \textbf{x}^1 \dfrac{\partial}{\partial \textbf{x}^3}+\textbf{x}^2\dfrac{\partial}{\partial \textbf{x}^4},\\
&\mathbf{V}_{\pm,3}:=
\mp \textbf{x}^2\dfrac{\partial}{\partial \textbf{x}^1}\pm \textbf{x}^1\frac{\partial}{\partial \textbf{x}^2} +(1-\textbf{x}^4)\dfrac{\partial}{\partial \textbf{x}^3}+\textbf{x}^3\dfrac{\partial}{\partial \textbf{x}^4}.\\
\end{aligned}
\end{equation}
It is easy to check that the vector fields \eqref{31 January 2019 formula 8} are tangent to $\mathbb{S}^3$, so that they restrict to smooth vector fields on the 3-sphere.
Note that \eqref{31 January 2019 formula 8} is an adaptation of \cite[Eqn.~(C.1)]{sphere} to the case at hand.

Let us introduce coordinates on $\mathbb{S}^3$ with the north pole excised by stereographically projecting it onto the hyperplane tangent to the 3-sphere at the south pole. The stereo\-graphic map
is given by
\begin{equation*}
\label{31 January 2019 formula 2}
\sigma:
\mathbb{R}^3
\to 
\mathbb{S}^3
\setminus 
\begin{pmatrix}
0\\0\\0\\2
\end{pmatrix} ,
\qquad
\begin{pmatrix}
u\\v\\w
\end{pmatrix}
\mapsto
\begin{pmatrix}
\textbf{x}^1\\\textbf{x}^2\\\textbf{x}^3\\\textbf{x}^4
\end{pmatrix}
=
\dfrac{1}{1+f^2}
\begin{pmatrix}
u\\
v\\
w\\
2f^2
\end{pmatrix},
\end{equation*}
where
\begin{equation*}
\label{31 January 2019 formula 3}
f^2:=\frac14 \,(u^2+v^2+w^2).
\end{equation*}
It is easy to see that the coordinate system $(u,v,w)$ has positive orientation.

In stereographic coordinates the metric reads 
\begin{equation}
\label{31 January 2019 formula 5}
g=\dfrac{1}{(1+f^2)^2}\left[\dr u^2+\dr v^2+\dr w^2  \right]
\end{equation}
and our framings are given by
\begin{equation}
\label{1 February 2019 formula 3}
\begin{aligned}
&2V_{\pm,1}=
(2 - 2f^2+u^2)\frac{\partial}{\partial u}
+
 (u v \mp 2 w)\frac{\partial}{\partial v}
+
(u w\pm 2v)\frac{\partial}{\partial w},
\\
&
2V_{\pm,2}=
(u v \pm 2 w)\frac{\partial}{\partial u}
+
(2 - 2f^2+v^2)\frac{\partial}{\partial v}
+  
(v w\mp 2u)\frac{\partial}{\partial w},
\\
&
2V_{\pm,3}=
(u w\mp 2v)\frac{\partial}{\partial u}
+
(v w\pm 2u)\frac{\partial}{\partial v}
+
(2 - 2f^2+w^2)\frac{\partial}{\partial w}.
\end{aligned}
\end{equation} 
A straightforward calculation shows that $\{V_{\pm,k}\}_{k=1}^3$ are positively oriented framings formed by (orthonormal) smooth Killing vector fields with respect to the metric $g$.

The framings $\{V_{\pm,k}\}_{k=1}^3$ define, via \eqref{massless dirac definition equation}, two Dirac operators $W_\pm$ related in accordance with
\begin{equation*}
W_-=G^*W_+G,
\end{equation*}
where
\begin{equation}
\label{5 February 2019 formula 2}
G:=
\dfrac{1}{4(1+f^2)}
\begin{pmatrix}
u^2+v^2+(w-2\ir)^2 & 4(v-\ir u)\\
-4(v+\ir u)& u^2+v^2+(w+2\ir)^2
\end{pmatrix}
.
\end{equation}
is the $SU(2)$ gauge transformation relating the two framings via \eqref{new framing generated by G Levi-civita} with $\widetilde{e}_k=V_{+,k}$ and $e_k=V_{-,k}$.

Let us deal with $W_+$ first. On account of the symmetries of the 3-sphere, we will write formulae for principal and subprincipal symbols of the propagator of $W_+$ at the south pole ($y=(0,0,0)$) for the choice of momentum $\overline{\eta}=(0,0,1)$. 

The principal symbol $(W_+)_\mathrm{prin}$ has eigenvalues $h^\pm(y,\eta)=\pm\|\eta\|$, whose Hamiltonian flows in stereographic coordinates read
\begin{equation}
z^\pm(t;0,\eta)=\pm 2\tan(t/2) \frac{\eta}{\|\eta\|}, \qquad \xi^\pm(t;0,\eta)=\cos^2(t/2) \,\eta,
\end{equation}
see also formula \eqref{relation between two geodesic flows}. Direct inspection of the parallel transport equation \eqref{definition of parallel transport of spinor} reveals that the parallel transport of 
\[
v^+(0,\overline{\eta})=\begin{pmatrix}
1\\0
\end{pmatrix},\qquad v^-(0,\overline{\eta})=\begin{pmatrix}
0\\1
\end{pmatrix}
\]
along $z^+$ and $z^-$, respectively, 
is given by
\begin{equation*}
\zeta^+(t;0,\overline{\eta})=e^{-\frac{it}{2}}\begin{pmatrix}
1\\0
\end{pmatrix}, \qquad \zeta^-(t;0,\overline{\eta})=e^{\frac{it}{2}}\begin{pmatrix}
0\\1
\end{pmatrix},
\end{equation*}
so that Theorem~\ref{theorem small time expansion principal symbol} gives us
\begin{equation}
\label{principal symbols example 3-sphere dirac}
\mathfrak{a}^+_0(t;0,\overline{\eta})=e^{-\frac{it}{2}}\begin{pmatrix}
1 & 0 \\
0 & 0
\end{pmatrix}, \qquad
\mathfrak{a}^-_0(t;0,\overline{\eta})=e^{\frac{it}{2}}\begin{pmatrix}
0& 0 \\
0 & 1
\end{pmatrix}.
\end{equation}

Let us now move to the subprincipal symbol.
Careful examination of formula \eqref{1 February 2019 formula 3} shows that
\begin{equation}
\label{einstein framing plus}
\overset{*}{K}=-g\,,
\end{equation}
which means that this particular framing has the `Einstein property',
namely, that the Hodge dual of contorsion is proportional to the metric.
Formula \eqref{einstein framing plus} implies that
\begin{equation}
\label{einstein framing corollary}
\nabla\overset{*}{K}=0.
\end{equation}
In view of
\eqref{einstein framing plus}
and
\eqref{einstein framing corollary},
Theorem~\ref{theorem invariant expression principal and subprincipal symbols dirac} gives us
\begin{equation}
\label{example anisotropic subps temp 1}
\begin{split}
\mathfrak{a}_{-1}^+(t;0,\eta)
&
=
\frac{1-it}{4\|\eta\|} \operatorname{Id}+O(t^2),
\end{split}
\end{equation}
\begin{equation}
\label{example anisotropic subps temp 2}
\begin{split}
\mathfrak{a}_{-1}^-(t;0,\eta)
&
=
-\frac{1-it}{4\|\eta\|} \operatorname{Id}-\frac{it}{2\|\eta\|}
\begin{pmatrix}
\eta_3 & \eta_1 -i\eta_2\\
\eta_2+i\eta_2 & -\eta_3
\end{pmatrix}
+O(t^2).
\end{split}
\end{equation}
In particular,
formulae \eqref{example anisotropic subps temp 1} and \eqref{example anisotropic subps temp 2} imply
\begin{equation}
\mathfrak{a}_{-1}^+(t;0,\overline{\eta})
=
\frac14 \begin{pmatrix}
1-it & 0 \\
0 & 1-it
\end{pmatrix}
+O(t^2),
\qquad
\mathfrak{a}_{-1}^-(t;0,\overline{\eta})
=
-\frac14 \begin{pmatrix}
1+it & 0\\
0 & 1-3it
\end{pmatrix}
+O(t^2).
\end{equation}

Let us now deal with $W_-\,$. Arguing as above, one obtains the following expressions for the principal symbols
\begin{equation}
\label{principal symbols example 3-sphere dirac minus}
\mathfrak{a}^+_0(t;0,\overline{\eta})=e^{\frac{it}{2}}\begin{pmatrix}
1 & 0 \\
0 & 0
\end{pmatrix}, \qquad
\mathfrak{a}^-_0(t;0,\overline{\eta})=e^{-\frac{it}{2}}\begin{pmatrix}
0& 0 \\
0 & 1
\end{pmatrix}.
\end{equation}
The subprincipal symbols are calculated in a similar fashion, only now we have
\begin{equation}
\label{einstein framing minus}
\overset{*}{K}=+g\,,
\end{equation}
compare with \eqref{einstein framing plus}.
Combining \eqref{einstein framing minus} with Theorem~\ref{theorem invariant expression principal and subprincipal symbols dirac} we get
\begin{equation}
\label{example anisotropic subps temp 1 minus}
\begin{split}
\mathfrak{a}_{-1}^+(t;0,\eta)
&
=
-\frac{1+it}{4\|\eta\|} \operatorname{Id}+O(t^2),
\end{split}
\end{equation}
\begin{equation}
\label{example anisotropic subps temp 2 minus}
\begin{split}
\mathfrak{a}_{-1}^-(t;0,\eta)
&
=
\frac{1+it}{4\|\eta\|} \operatorname{Id}-\frac{it}{2\|\eta\|}
\begin{pmatrix}
\eta_3 & \eta_1 -i\eta_2\\
\eta_2+i\eta_2 & -\eta_3
\end{pmatrix}
+O(t^2).
\end{split}
\end{equation}
In particular, formulae \eqref{example anisotropic subps temp 1 minus} and \eqref{example anisotropic subps temp 2 minus} imply
\begin{equation}
\mathfrak{a}_{-1}^+(t;0,\overline{\eta})
=
-\frac14 \begin{pmatrix}
1+it & 0 \\
0 & 1+it
\end{pmatrix}
+O(t^2),
\qquad
\mathfrak{a}_{-1}^-(t;0,\overline{\eta})
=
\frac14 \begin{pmatrix}
1-it & 0\\
0 & 1+3it
\end{pmatrix}
+O(t^2).
\end{equation}

Of course, the principal symbols of positive and negative propagators of $W_-$ at $(t;0,\overline{\eta})$ can also be obtained from \eqref{principal symbols example 3-sphere dirac} by means of the gauge transformation \eqref{5 February 2019 formula 2} evaluated at $z^\pm(t;0,\overline{\eta})$, 
\begin{equation}
\label{special G}
\left.G\right|_{(u,v,w)=z^\pm(t;0,\overline{\eta})}=\begin{pmatrix}
e^{\mp it} & 0\\
0 & e^{\pm it}
\end{pmatrix}.
\end{equation}
Namely, multiplying \eqref{principal symbols example 3-sphere dirac}
from the left by the Hermitian conjugate of
\eqref{special G},
we arrive at
\eqref{principal symbols example 3-sphere dirac minus}.

Finally, let us run a test for Theorem~\ref{theorem about third Weyl coefficient}. It is well known \cite{Bar1, Bar2, Sulanake, Trautman} that the eigenvalues
of the Dirac operator on the round 3-sphere are
\[
\pm \left(k+\frac12 \right), \qquad k=1,2,\ldots,
\]
with multiplicity $k(k+1)$. Therefore, in view of \eqref{hyperbolic approach equation 1 plus}, we have
\begin{equation}
\label{13 April 2019 equation 1}
\mathcal{F}_{\lambda\to t}[N_+'\ast \mu](y,t)
=\frac1{2\pi^2}
e^{-\frac{it}{2}}
\sum_{k=1}^{+\infty} k(k+1)e^{-ikt}.
\end{equation}
Note that the quantity $2\pi^2$ appearing in the RHS of \eqref{13 April 2019 equation 1} is the volume of the 3-sphere.
Taking the Fourier transform of the RHS of \eqref{13 April 2019 equation 1} we get
\begin{equation}
\label{13 April 2019 equation 2}
\begin{split}
\mathcal{F}^{-1}_{t\to \lambda}\left[\frac1{2\pi^2}
e^{-\frac{it}{2}}
\sum_{k=1}^\infty(k^2+k)e^{-ikt}\hat{\mu}(t) \right]&=\frac{1}{4\pi^3}\sum_{k=1}^{+\infty}\int_{-\infty}^{+\infty} e^{it(\lambda-\frac12-k)}\,(k^2+k)\,\hat{\mu}(t)\,\dr t\\
&=\frac{1}{4\pi^3}\sum_{k=-\infty}^{+\infty}\int_{-\infty}^{+\infty} e^{-itk}\,(k^2+k)\,\bigl( e^{it(\lambda-\frac12)}\hat{\mu}(t)\bigr)\,\dr t +O(\lambda^{-\infty})\\
&=\frac{1}{2\pi^2}\left((\lambda-\frac12)^2+(\lambda-\frac12)+O(\lambda^{-\infty}) \right)\\
&=\frac{1}{2\pi^2}\left(\lambda^2-\frac14+O(\lambda^{-\infty})\right).
\end{split}
\end{equation}
Combining \eqref{13 April 2019 equation 2} and \eqref{13 April 2019 equation 1} we arrive at
\begin{equation}
\label{13 April 2019 equation 3}
[N_+'\ast {\mu}](y,\lambda)=\frac{1}{2\pi^2}\left(\lambda^2-\frac14+O(\lambda^{-\infty})\right) \qquad \text{as }\lambda\to +\infty.
\end{equation}
Since $\mathcal{R}(y)=6$, formula \eqref{13 April 2019 equation 3} is in agreement with \eqref{theorem about third Weyl coefficient formula}.

\subsection{The case $M=\mathbb{S}^2\times\mathbb{S}^1$}
\label{The case 2-sphere times circle}

Let $M=\mathbb{S}^2\times\mathbb{S}^1$ be endowed with the metric $g=g_{\mathbb{S}^2}+d\varphi^2$, where $g_{\mathbb{S}^2}$ is the round metric on the 2-sphere. Let $y\in M$ be given. In this subsection we shall compute the small time expansion for the subprincipal symbols of the Dirac propagator $\widetilde{W}$ associated with a Levi-Civita framing at $y$. In this case, the result will not be isotropic in momentum $\eta$, because, unlike the previous example, $(\mathbb{S}^2\times\mathbb{S}^1,g)$ is not an Einstein manifold.

Without loss of generality, we assume that $y$ coincides with the north pole when projected onto $\mathbb{S}^2$. The exponential map $\exp_y:T_yM \to M$ is realised explicitly by
\begin{equation}
\label{exponential map S2 times S1 dirac}
(u,v,w) \mapsto (\theta=\sqrt{u^2+v^2}, \phi=\arctan(v/u), \varphi=w),
\end{equation}
where $(\theta,\phi)$ are standard spherical coordinates on $\mathbb{S}^2$.
Formula \eqref{exponential map S2 times S1 dirac} defines geodesic normal coordinates $(u,v,w)$ in a neighbourhood of $y$. In such coordinates, the metric $g$ reads
\begin{equation}
\label{12 June 2019 formula 2}
g(u,v,\textcolor{black}{w})=\frac{1}{u^2+v^2}
\begin{pmatrix}
u^2+\frac{v^2 \,\sin^2( \sqrt{u^2+v^2})}{u^2+v^2}
&
uv \left(1-\frac{\sin^2( \sqrt{u^2+v^2})}{u^2+v^2} \right)
&
0
\\
uv \left(1-\frac{\sin^2( \sqrt{u^2+v^2})}{u^2+v^2} \right)
&
v^2+\frac{u^2 \,\sin^2( \sqrt{u^2+v^2})}{u^2+v^2}
&
0
\\
0
&
0
&
u^2+v^2
\end{pmatrix}.
\end{equation}
We will assume that normal coordinates are chosen so that the Levi-Civita framing satisfies $\widetilde{e}_j{}^\alpha(y)=\delta_j{}^\alpha$. In this case, the Hamiltonian flows generated by the eigenvalues of $\widetilde{W}_\mathrm{prin}$ read, simply,
\[
z^\pm(t;0,\eta)=\pm t\frac{\eta}{\|\eta\|}, \qquad \xi^\pm(t;0,\eta)=\eta.
\]
The Ricci curvature of $g$ in normal coordinates \textcolor{black}{$(u,v,w)$} is given by
\begin{equation}
\begin{aligned}
\operatorname{Ric}(u,v,w)
&=
\frac{1}{u^2+v^2}
\begin{pmatrix}
u^2+\frac{v^2 \,\sin^2( \sqrt{u^2+v^2})}{u^2+v^2}
&
uv \left(1-\frac{\sin^2( \sqrt{u^2+v^2})}{u^2+v^2} \right)
&
0
\\
uv \left(1-\frac{\sin^2( \sqrt{u^2+v^2})}{u^2+v^2} \right)
&
v^2+\frac{u^2 \,\sin^2( \sqrt{u^2+v^2})}{u^2+v^2}
&
0
\\
0
&
0
&
0
\end{pmatrix}.
\end{aligned}
\end{equation}
Hence, Theorem~\ref{theorem small time expansion subprincipal symbols levi-civita dirac} tells us that
\begin{equation}
\label{small time expansion subprincipal symbols levi-civita dirac example S2 times S1}
\begin{split}
\widetilde{\mathfrak{a}}_{-1}^\pm(t;y,\eta)
&
=
\mp it
\left(
\frac{1}{12\|\eta\|}\,\widetilde{P}^\pm(y,\eta)
-
\frac{1}{8\|\eta\|^2}(\eta_1 \,\sigma^1(y)+\eta_2\,\sigma^2(y))
\right)
+
O(t^2)\,
\\
&
=
\mp \frac{it}{24 \|\eta\|^2} \left[\|\eta\|\operatorname{Id} 
+(-3\pm 1)\,\eta_\alpha\,\sigma^\alpha(y)
+3\,s^3\,\eta_\beta\,\widetilde{e}_3{}^\beta(y)
\right]
+O(t^2),
\end{split}
\end{equation}
where the $s^j$ and the $\sigma^\alpha$
are defined by formulae \eqref{Pauli matrices basic} and \eqref{Pauli matrices projection} respectively,
and $\widetilde{e}_3$ is the vector field $\partial/\partial\varphi$
(unit vector field along the positive direction of the circle $\mathbb{S}^1$).

Let us stress once again that, even though the intermediate steps depend on the choice of coordinates, the final result \eqref{small time expansion subprincipal symbols levi-civita dirac example S2 times S1} is a scalar matrix-function, thus independent of the choice of coordinates.
The only assumption involved in the derivation of formula
\eqref{small time expansion subprincipal symbols levi-civita dirac example S2 times S1}
is that we used a particular Levi-Civita framing at the point $y$,
one which respects the product structure of the manifold.
The presence of the vector field $\widetilde{e}_3$ in formula
\eqref{small time expansion subprincipal symbols levi-civita dirac example S2 times S1}
is a manifestation of anisotropy.

\section{Acknowledgements}

We are grateful to Yiannis Petridis for providing useful references.  Furthermore, we would like to thank an anonymous referee for insightful comments, in particular,  for suggesting the argument in Remark~\ref{remark Lalpha}.

\begin{appendices}

\section{The Weitzenb\"ock connection}
\label{Geometric properties of the Weitzenb\"ock connection}

In this appendix we recall the main properties of the Weitzenb\"ock connection and fix our sign conventions, which are chosen in agreement with \cite{nakahara}.

\

Let $M$ be an oriented Riemannian 3-manifold and let $\{e_j\}_{j=1}^3$ be a global orthonormal framing.

\begin{definition}
The \emph{Weitzenb\"ock connection} is the affine connection $\nabla^W$ on $M$ defined by the condition
\begin{equation}
\label{Weitzenbock defining condition}
\nabla^W_v (f^i\, e_i)=v(f^i)\, e_i\,,
\end{equation}
for every vector field $v$ and $f^i\in C^\infty(M;\mathbb{R})$, $i=1,2,3$.
\end{definition}

The Weitzenb\"ock connection is a curvature-free metric-compatible connection. Formula \eqref{Weitzenbock defining condition} implies
\[
0=\nabla^W_{e_k} e_j{}^\alpha=e_k{}^\beta\, \dfrac{\partial e_j{}^\alpha}{\partial x^\beta}+ e_k{}^\beta \,\Upsilon^\alpha{}_{\beta \gamma}\, e_j{}^\gamma,
\]
which, in turn, yields a formula for the Weitzenb\"ock connection coefficients $\Upsilon^\alpha{}_{\beta\gamma}$ in terms of the framing:
\begin{equation}
\label{weitzenbock connection coefficients definition}
\Upsilon^\alpha{}_{\beta\gamma}=- \,e^j{}_\gamma  \dfrac{\partial e_j{}^\alpha}{\partial x^\beta}= e_j{}^\alpha  \dfrac{\partial e^j{}_\gamma}{\partial x^\beta}\,.
\end{equation}
Here $e^j{}_\alpha:=\delta^{jk}\,g_{\alpha\beta}\,e_k{}^\beta$.
The torsion tensor associated with $\nabla^W$ is
\begin{equation}
\label{torsion definition}
T^\alpha{}_{\beta \gamma}= \Upsilon^\alpha{}_{\beta\gamma}-\Upsilon^\alpha{}_{\gamma \beta}
\end{equation}
and the curvature tensor vanishes identically.
The Weitzenb\"ock connection coefficients and the Christoffel symbols are related via the identity
\begin{equation}
\label{weitzenbock connection coefficients vs christoffel}
\Upsilon^\alpha{}_{\beta \gamma}=\Gamma^\alpha{}_{\beta\gamma}+\frac12\left(T^\alpha{}_{\beta\gamma}+T_\beta{}^\alpha{}_\gamma+T_\gamma{}^\alpha{}_\beta \right),
\end{equation}
see \cite[Eqn.~(7.34)]{nakahara}. The second summand on the RHS of \eqref{weitzenbock connection coefficients vs christoffel}
\begin{equation}
\label{contorsion definition}
K^\alpha{}_{\beta \gamma}:=\frac12\left(T^\alpha{}_{\beta\gamma}+T_\beta{}^\alpha{}_\gamma+T_\gamma{}^\alpha{}_\beta \right)
\end{equation}
is called \emph{contorsion} of $\nabla^W$. Note that the torsion tensor is antisymmetric in the second and third indices, $T^\alpha{}_{\beta\gamma}=-T^{\alpha}{}_{\gamma\beta}$, whereas the contorsion tensor is antisymmetric in the first and third ones, $K_{\alpha\beta\gamma}=-K_{\gamma\beta\alpha}$ (the first index was lowered using the metric). Torsion and contorsion can be expressed one in terms of the other and capture the geometric information encoded within the framing.

In dimension three antisymmetric tensors of order two are equivalent to vectors.
Therefore, we define
\begin{equation}
\label{hodge star torsion}
\overset{*}{T}{}_{\alpha\beta}:=\frac{1}{2}T_\alpha{}^{\mu\nu}\,E_{\mu\nu\beta}
\end{equation}
and
\begin{equation}
\label{hodge star contorsion}
\overset{*}{K}{}_{\alpha\beta}:=\frac{1}{2}K^\mu{}_\alpha{}^\nu\,E_{\mu\nu}{}_\beta\,,
\end{equation}
where
\begin{equation}
\label{Levi-Civita tensor}
E_{\alpha\beta\gamma}(x):=\rho(x)\,\varepsilon_{\alpha\beta\gamma}\,,
\end{equation}
$\rho$ is the Riemannian density and $\varepsilon$ is the totally antisymmetric symbol,
$\varepsilon_{123}:=+1$.
It is often convenient to use \eqref{hodge star torsion} and \eqref{hodge star contorsion} instead of $T$ and $K$
because the former have lower order -- two instead of three.

As a final remark, we observe that formulae
\eqref{hodge star torsion}, \eqref{hodge star contorsion} and \eqref{contorsion definition}
imply
\begin{equation}
\label{torsion star vs contorsion star}
\overset{*}{K}_{\alpha\beta}=\overset{*}{T}_{\alpha\beta}-\frac12 \overset{*}{T}{}^\gamma{}_\gamma\, g_{\alpha\beta}\,,
\end{equation}
\begin{equation}
\label{contorsion star vs torsion star}
\overset{*}{T}_{\alpha\beta}=\overset{*}{K}_{\alpha\beta}-\overset{*}{K}{}^\gamma{}_\gamma\, g_{\alpha\beta}\,.
\end{equation}

\section{Some techincal proofs}
\label{techincal proofs}

\subsection{Proof of Theorem~\ref{theorem covariant derivatives gauge transformation}}
\label{appendix proof covariant derivative gauge transformation}

In the following we work in normal coordinates centred at $y=0$ such that
\[
e_j{}^\alpha(0)=\widetilde{e}_j{}^\alpha(0)=\delta_j{}^\alpha.
\]
Since $G\in C^\infty(M;SU(2))$ and $G(0)=\mathrm{Id}$, there exist smooth real-valued functions $A_k$, $k=1,2,3$, such that $A_k(0)=0$ and
\begin{equation}
\label{G as exponential}
G(x)=e^{is^k\,A_k(x)}
\end{equation}
in a neighbourhood of $y=0$.
Differentiating \eqref{G as exponential} with respect to $x$ and evaluating the result at $0$, we obtain
\begin{equation}
\label{nabla g proof experssion}
G_{x^\alpha}(0)=i s^k\,F_{k\alpha},
\end{equation}
where
$F_{k\alpha}:=[A_k]_{x^\alpha}(0)$.

Now, differentiating \eqref{new framing generated by G} with respect to $x$ and evaluating the result at $0$, we obtain
\begin{equation}
\label{proof nabla G temp 1}
\begin{split}
\frac{\partial e_j{}^\alpha}{\partial x^\beta}(0)
&=\frac12 \operatorname{tr}\left[s_j\,G_{x^\beta}^*(0)\,s^k+s_j\,s^k\,G_{x^\beta}(0)\right]\widetilde{e}_k{}^\alpha(0)+\frac{\partial \widetilde{e}_k{}^\alpha}{\partial x^\beta}(0)\\
&=\frac12 \operatorname{tr}\left[[s_j\,s^k\,G_{x^\beta}(0)]^*+s_j\,s^k\,G_{x^\beta}(0\right]\widetilde{e}_k{}^\alpha(0)+\frac{\partial \widetilde{e}_k{}^\alpha}{\partial x^\beta}(0)\\
&=\operatorname{Re}\,\operatorname{tr}\left[s_j\,s^k\,G_{x^\beta}(0)\right]\widetilde{e}_k{}^\alpha(0)+\frac{\partial \widetilde{e}_k{}^\alpha}{\partial x^\beta}(0).
\end{split}
\end{equation}
Contracting \eqref{proof nabla G temp 1} with $e^j{}_\gamma(0)=\widetilde{e}^j{}_\gamma(0)=\delta^j{}_\gamma$, using \eqref{weitzenbock connection coefficients definition} and rearranging, we obtain
\begin{equation}
\label{proof nabla G temp 2}
\begin{split}
\widetilde{\Upsilon}^\alpha{}_{\beta\gamma}(0)-\Upsilon^\alpha{}_{\beta\gamma}(0)
&
=
\operatorname{Re}\operatorname{tr}\left[i\,s_j s^k s^l\right]\,F_{l\beta} \,\delta^j{}_\gamma\,\delta_k{}^\alpha
\\
&
=
-2\,\varepsilon_{\gamma}{}^{\alpha l}\,F_{l\beta}.
\end{split}
\end{equation}
In view of \eqref{torsion definition}, formula \eqref{proof nabla G temp 2} implies
\begin{equation}
\label{proof nabla G temp 3}
\begin{split}
T^\alpha{}_{\beta \gamma}(0)-\widetilde{T}^\alpha{}_{\beta \gamma}(0)=2\,\varepsilon_{\gamma}{}^{\alpha l}\,F_{l\beta}-2\,\varepsilon_{\beta}{}^{\alpha l}\,F_{l\gamma}.
\end{split}
\end{equation}
Contracting \eqref{proof nabla G temp 3} with $\frac12E_\sigma{}^{\beta\gamma}(y)=\frac12\varepsilon_\sigma{}^{\beta\gamma}$, cf.~\eqref{Levi-Civita tensor}, we get
\begin{equation}
\label{proof nabla G temp 4}
\begin{split}
\overset{*}{T}{}^\alpha{}_\sigma(0)-\overset{*}{\widetilde T}{}^\alpha{}_\sigma(0)
&
=
2 \varepsilon_\sigma{}^{\beta\gamma}\,\varepsilon_\gamma{}^{\alpha l} F_{l\beta}
\\
&
=
2\delta^{\beta l}\,F_{l\beta} \, \delta_\sigma{}^\alpha- 2 \delta_\sigma{}^l\, F_{l\alpha}.
\end{split}
\end{equation}
Inverting \eqref{proof nabla G temp 4} so as to express $F$ in terms of $[\overset{*}{T}-\overset{*}{\widetilde T}](0)$, we arrive at
\begin{equation}
\label{proof nabla G temp 5}
\begin{split}
-2F_{k\beta}
&
=
\delta_k{}^\alpha\,[\overset{\ast}{T}-\overset{\ast}{\widetilde T}]_{\alpha\beta}(y)-\frac12\,\delta_{k\beta}\,[\overset{\ast}{T}-\overset{\ast}{\widetilde T}]^\gamma{}_\gamma (0)\\
&
=\delta_k{}^\alpha \bigl[\overset{\ast}{K}-\overset{\ast}{\widetilde K}\bigr]_{\alpha\beta}(0).
\end{split}
\end{equation}
Substitution of \eqref{proof nabla G temp 5} into \eqref{nabla g proof experssion} gives 
\eqref{covariant derivative of gauge transformation G}.

\subsection{Proof of Theorem~\ref{theorem covariant derivatives gauge transformation Levi-civita}}
\label{appendix proof double covariant derivative gauge transformation}

Recall that according to formula \eqref{new framing generated by G Levi-civita} we have $G(y)=\mathrm{Id}$. 
In the following we work in a sufficiently small neighbourhood $\mathcal{U}$ of $y$ and we choose normal coordinates centred at $y=0$ such that $\widetilde e_j{}^\alpha(0)=e_j{}^\alpha(0)=\delta_j{}^\alpha$. 

Since $G\in C^\infty(M;SU(2))$ and $G(0)=\mathrm{Id}$, there exist smooth real-valued functions $A_k$, $k=1,2,3$, such that $v_k(0)=0$ and
\begin{equation}
\label{proof nabla nabla G temp 1}
G(x)=e^{is^k\,A_k(x)}
\end{equation}
in a neighbourhood of $y=0$.
Differentiating \eqref{proof nabla nabla G temp 1} twice with respect to $x$ and evaluating the result at zero we obtain
\begin{equation}
\label{proof nabla G temp 6 LC}
\begin{split}
G_{x^\alpha x^\beta}(0)
&
=
i s^k \,[A_k]_{x^\alpha x^\beta}(0)-\frac12 s^k\,s^j\,(F_{k\alpha}F_{j\beta}+F_{j\alpha}F_{k\beta})\\
=
&
i s^k\, H_{k\alpha\beta}-\delta^{jk}\,\mathrm{Id}\,F_{j\alpha}F_{k\beta}.
\end{split}
\end{equation}
Here $H_{k\alpha\beta}:=[A_k]_{x^\alpha x^\beta}(0)$ and $F_{k\alpha}:=[A_k]_{x^\alpha}(0)$. The task at hand is to express $H$ in terms of the contorsion tensor $K$ and its derivatives.

Differentiating the identity
\begin{equation*}
\label{proof nabla G temp 7 LC}
\begin{split}
\Upsilon^\alpha{}_{\beta\gamma}(x)&=e_k{}^\alpha(x) \dfrac{\partial e^k{}_\gamma}{\partial x^\beta}(x)\\
\end{split}
\end{equation*}
with respect to $x^\mu$, evaluating the outcome at $y=0$ and resorting to Lemma~\ref{lemma Levi-Civita framing}, we obtain
\begin{equation}
\label{proof nabla G temp 8 LC}
\begin{split}
\left[\Upsilon^\alpha{}_{\beta\gamma}\right]_{x^\mu}(0)
&
=
\dfrac{\partial e_k{}^\alpha}{\partial x^\mu}(0) \dfrac{\partial e^k{}_\gamma}{\partial x^\beta}(0)+e_k{}^\alpha(0) \dfrac{\partial^2 e^k{}_\gamma}{\partial x^\beta \partial x^\mu}(0)
\\
&
=
-\Upsilon^\alpha{}_\mu{}_\rho(0)  \, \Upsilon^\rho{}_{\beta\gamma}(0)
\\
&
+\delta_k{}^\alpha
\operatorname{Re} \operatorname{tr}[s^k \,G^*_{x^\beta x^\mu}(0)\,s_l+s^k\,G^*_{x^\beta}(0)\,s_l\,G_{x^\mu}(0)]\,\delta^l{}_\gamma\\
&+\delta_k{}^\alpha\,[\widetilde{e}^k{}_\gamma]_{x^\beta x^\mu}(0)
\\
&
=
-\Upsilon^\alpha{}_\mu{}_\rho(0)  \, \Upsilon^\rho{}_{\beta\gamma}(0)+
\delta_k{}^\alpha\delta^l{}_\gamma\operatorname{Re} \operatorname{tr}[s_l\,s^k \,G^*_{x^\beta x^\mu}(0)]
\\
&
+\delta_k{}^\alpha\delta^l{}_\gamma \operatorname{Re} \operatorname{tr}[s^k\,G^*_{x^\beta}(0)\,s_l \,G_{x^\mu}(0)]+\delta_k{}^\alpha\,[\widetilde{e}^k{}_\gamma]_{x^\beta x^\mu}(0).
\end{split}
\end{equation}

Straightforward calculations show that
\begin{equation}
\label{proof nabla G temp 9 LC}
\begin{split}
-\Upsilon^\alpha{}_\mu{}_\rho(0)  \, \Upsilon^\rho{}_{\beta\gamma}(0)
&
=-\operatorname{Re}\,\operatorname{tr}\left[s^\alpha\,G^*_{x^\mu}(0)s_\rho\right] \operatorname{Re}\,\operatorname{tr}\left[s^\rho\,G^*_{x^\beta}(0)s_\gamma\right]
\\
&
=
4\delta^\alpha{}_\gamma F^r{}_\beta F_{r\mu}-4\delta^{\alpha j}\,\delta_\gamma{}^k\,F_{j\beta} F_{k\mu},
\end{split}
\end{equation}
\begin{equation}
\label{proof nabla G temp 10 LC}
\delta_k{}^\alpha\delta^l{}_\gamma\operatorname{Re} \operatorname{tr}[s_l\,s^k\,G^*_{x^\beta x^\mu}(0)]=-2 \varepsilon^\alpha{}_{\gamma}{}^r\,H_{r\beta\mu}-2\delta^\alpha{}_\gamma\,F^r{}_\beta\,F_{r\mu}
\end{equation}
and
\begin{equation}
\label{proof nabla G temp 11 LC}
\delta_k{}^\alpha\delta^l{}_\gamma \operatorname{Re} \operatorname{tr}[s_l\,G^*_{x^\beta}(0)\,s^k \,G_{x^\mu}(0)]=
2(\delta^{\alpha k}\delta^j{}_\gamma+\delta^{\alpha j}\delta^k{}_\gamma)\,F_{j\mu}F_{k\beta}-2\delta^\alpha{}_\gamma\,F^r{}_\beta\,F_{r\mu}.
\end{equation}
Substituting \eqref{proof nabla G temp 9 LC}--\eqref{proof nabla G temp 11 LC} into \eqref{proof nabla G temp 8 LC} 
we obtain
\begin{equation}
\label{proof nabla G temp 12 LC}
\left[\Upsilon^\alpha{}_{\beta\gamma}\right]_{x^\mu}(0)= -2 \varepsilon^\alpha{}_{\gamma}{}^r\,H_{r\beta\mu}+2(\delta^{\alpha j}\delta^k{}_\gamma-\delta^{\alpha k}\delta^j{}_\gamma)\,F_{j\mu}F_{k\beta}+\delta_k{}^\alpha\,[\widetilde{e}^k{}_\gamma]_{x^\beta x^\mu}(0).
\end{equation}
Summing up \eqref{proof nabla G temp 12 LC} and \eqref{proof nabla G temp 12 LC} with indices $\beta$ and $\mu$ swapped, we arrive at
\begin{equation}
\label{proof nabla G temp 13 LC}
\left[\Upsilon^\alpha{}_{\beta\gamma}\right]_{x^\mu}(0)+\left[\Upsilon^\alpha{}_{\mu\gamma}\right]_{x^\beta}(0)= -4\varepsilon^\alpha{}_{\gamma}{}^r\,H_{r\beta\mu}+2\delta_k{}^\alpha\,[\widetilde{e}^k{}_\gamma]_{x^\beta x^\mu}(0).
\end{equation}
Now, formula \eqref{weitzenbock connection coefficients vs christoffel} and the fact that the Christoffel symbols vanish at $y=0$ imply
\begin{equation}
\label{proof nabla G temp 14 LC}
\varepsilon_\alpha{}^{\gamma}{}_\rho\,\Upsilon^\alpha{}_{\mu\gamma}(0)=\varepsilon_\alpha{}^{\gamma}{}_\rho\,K^\alpha{}_{\mu\gamma}(0)=2 \overset{*}{K}_{\mu\rho}(0).
\end{equation}
Hence, by contracting \eqref{proof nabla G temp 13 LC} with $\varepsilon_\alpha{}^{\gamma}{}_\rho$, substituting \eqref{proof nabla G temp 14 LC} in, and resorting to the identity
\[
\varepsilon_\alpha{}^{\gamma}{}_\rho \,\varepsilon^\alpha{}_{\gamma}{}^r=2\delta_\rho{}^r,
\]
we obtain
\begin{equation}
\label{proof nabla G temp 15 LC}
[\overset{*}{K}_{\beta\rho}]_{x^\mu}(0)+  [\overset{*}{K}_{\mu\rho}]_{x^\beta}(0) =-4\,\delta_\rho{}^r\,H_{r\beta\mu}+\varepsilon_\alpha{}^{\gamma}{}_\rho\,\delta_k{}^\alpha\,[\widetilde{e}^k{}_\gamma]_{x^\beta x^\mu}(0).
\end{equation}

We claim that
\begin{equation}
\label{proof nabla G temp 16 LC}
\varepsilon_\alpha{}^{\gamma}{}_\rho\,\delta_k{}^\alpha\,[\widetilde{e}^k{}_\gamma]_{x^\beta x^\mu}(0)=0.
\end{equation}
To see this, let us observe that formula \eqref{expansion Levi-Civita framing Dirac} implies
\[
\widetilde{e}^k{}_\gamma(x)=e^k{}_\gamma(0)-\frac16 e^k{}_\rho(0)\,R_{\gamma\tau}{}^\rho{}_\nu(0)\,x^\tau x^\nu +O(\|x\|^3), \qquad j=1,2,3,
\]
so that
\begin{equation}
\label{proof nabla G temp 17 LC}
\delta_k{}^\alpha\,[\widetilde{e}^k{}_\gamma]_{x^\beta x^\mu}(0)=-\frac16  \left(R_{\gamma\beta}{}^\alpha{}_\mu+R_{\gamma\mu}{}^\alpha{}_\beta \right)(0).
\end{equation}
The RHS of \eqref{proof nabla G temp 17 LC} is symmetric in $\alpha$ and $\gamma$, whereas $\varepsilon_\alpha{}^{\gamma}{}_\rho$ is antisymmetric in the same indices, so \eqref{proof nabla G temp 16 LC} follows.

All in all, \eqref{proof nabla G temp 6 LC}, \eqref{proof nabla G temp 15 LC} and \eqref{proof nabla G temp 16 LC} give us
\begin{equation}
\label{proof nabla G temp 18 LC}
\nabla_\alpha \nabla_\beta\, G(0)=-\frac{i}{4} [ \nabla_\alpha\overset{*}{K}_{\beta\rho}(0)+  \nabla_\beta \overset{*}{K}_{\alpha\rho}(0) ]\sigma^\rho(0)-\delta^{jk}\,\mathrm{Id}\,F_{j\alpha}F_{k\beta}.
\end{equation}
Finally, substitution of \eqref{proof nabla G temp 5} with $\widetilde{K}(0)=0$ (which is the case for the Levi-Civita framing) into \eqref{proof nabla G temp 18 LC} yields \eqref{double covariant derivative of gauge transformation G Levi-civita}.

\end{appendices}

\end{document}